\numberwithin{equation}{section}
\newcommand{\baa}{\begin{array}}
\newcommand{\eaa}{\end{array}}
\def\be{\begin{equation}}
\def\ee{\end{equation}}
\def\epsilon{\varepsilon}
\def\R{\mathbb{R}}
\def\N{\mathbb{N}}
\def\tilde{\widetilde}
\def\varep{\varepsilon}
\def\ds{\displaystyle}
\newtheorem{theorem}{Theorem}[section]
\newtheorem{lemma}[theorem]{Lemma}
\newtheorem{proposition}[theorem]{Proposition}
\newtheorem{corollary}[theorem]{Corollary}
\newtheorem{definition}[theorem]{Definition}
\newtheorem{remark}[theorem]{Remark}
\newtheorem{conjecture}[theorem]{Conjecture}
\title{\bf{Reaction-diffusion fronts in funnel-shaped domains}}
\author{Fran\c cois Hamel\thanks{Aix-Marseille Univ, CNRS, Centrale Marseille, I2M, Marseille, France (\texttt{francois.hamel@univ-amu.fr}). This work has received funding from Excellence Initiative of Aix-Marseille University~-~A*MIDEX, a French ``Investissements d'Avenir'' programme, and from the ANR NONLOCAL (ANR-14-CE25-0013) and RESISTE (ANR-18-CE45-0019) projects.} \ and Mingmin Zhang\thanks{Aix-Marseille Univ, CNRS, Centrale Marseille, I2M, Marseille, France, and School of Mathematical Sciences, University of Science and Technology of China, Hefei, Anhui 230026, China (\texttt{mingmin.zhang.math@gmail.com}). M. Zhang is  supported by the China Scholarship Council for 2 years of study at Aix-Marseille Universit\'{e}.}}
\date{}
\begin{document}

\maketitle

\begin{abstract} 
We consider bistable reaction-diffusion equations in funnel-shaped domains of $\mathbb{R}^N$ made up of straight parts and conical parts with positive opening angles. We study the large time dynamics of entire solutions emanating from a planar front in the straight part of such a domain and moving into the conical part. We show a dichotomy between blocking and spreading, by proving especially some new Liouville type results on stable solutions of semilinear elliptic equations in the whole space $\R^N$. We also show that any spreading solution is a transition front having a global mean speed, which is the unique speed of planar fronts, and that it converges at large time in the conical part of the domain to a well-formed front whose position is approximated by expanding spheres. Moreover, we provide sufficient conditions on the size~$R$ of the straight part of the domain and on the opening angle $\alpha$ of the conical part, under which the solution emanating from a planar front is blocked or spreads completely in the conical part. We finally show the openness of the set of parameters~$(R,\alpha)$ for which the propagation is complete.
\vskip 0.1cm
\noindent{\small{\it Mathematics Subject Classification}: 35B08; 35B30; 35B40; 35B53; 35C07; 35J61; 35K57}
\vskip 0.1cm
\noindent{\small{\it Key words}: Reaction-diffusion equations; Transition fronts; Blocking; Spreading; Propagation; Liouville type results.}
\end{abstract}


\section{Introduction and main results}\label{Sec-1}

This paper is devoted to the study of propagation phenomena of time-global (entire) bounded solutions $u=u(t,x)$ of reaction-diffusion equations of the type
\begin{align}
\label{1}
\begin{cases}
u_t=\Delta u+f(u), & t\in\mathbb{R},\ x\in\overline{\Omega},\\
\nu\cdot \nabla u=0,   & t\in\mathbb{R},\ x\in\partial\Omega,
\end{cases}
\end{align}
in certain unbounded smooth domains $\Omega\subset\mathbb{R}^N$ with $N\ge 2$. Here $u_t$ stands for $\frac{\partial u}{\partial t}$, and $\nu=\nu(x)$ is the outward unit normal on the boundary $\partial\Omega$, that is, Neumann boundary conditions are imposed on $\partial\Omega$. Equations of type \eqref{1} arise especially in the fields of population dynamics, mathematical ecology, physics and also medicine and biology. The function $u$ typically stands for the temperature or the concentration of a species. It is assumed to be bounded, then with no loss of generality we suppose that it takes values in $[0,1]$. The reaction term $f$ is assumed to be of class $C^{1,1}([0,1],\mathbb{R})$ and such that 
\begin{equation}
\label{f-bistable-1}
f(0)=f(1)=0, \ \ f'(0)<0, \ \ f'(1)<0,
\end{equation}
which means that both 0 and 1 are stable zeros of $f$. Moreover, we assume that $f$ is of the bistable type with positive mass, that is, there exists $\theta\in(0,1)$ such that 
\begin{equation}
\label{f-bistable-2}
f<0~\text{in}~ (0,\theta),\quad f>0~\text{in}~(\theta,1),\quad f'(\theta)>0,\quad \int_{0}^{1}f(s)ds>0.
\end{equation}
The fact that $f$ has a positive mass over $[0,1]$ means the state $1$ is in some sense more stable than~$0$.\footnote{If the integral of $f$ over $[0,1]$ were negative, the study would be similar, after changing $u$ into $1-u$ and $f(s)$ into $-f(1-s)$. If the integral of $f$ over $[0,1]$ were equal to $0$, the analysis of the propagation phenomena would be very different, since then no front connecting $0$ and $1$ with nonzero speed can exist in the one-dimensional version of~\eqref{1}.} A typical example of a function~$f$ satisfying \eqref{f-bistable-1}-\eqref{f-bistable-2} is the cubic nonlinearity $f(u)=u(1-u)(u-\theta)$ with $\theta\in (0,1/2)$. For mathematical purposes, we extend~$f$ in $\mathbb{R}\backslash[0,1]$ to a $C^{1,1}(\R,\R)$ function as follows: $f(s)=f'(0)s$ for $s<0$, and $f(s)=f'(1)(s-1)$ for $s>1$.

One main question of interest for the solutions of~\eqref{1} is the description of their dynamical properties as $t\to\pm\infty$. The answer to this question depends strongly on the geometry of the underlying domain $\Omega$. In the one-dimensional real line $\R$, a prominent role is played by a class of particular solutions, namely the traveling fronts. More precisely, with assumptions \eqref{f-bistable-1}-\eqref{f-bistable-2} above, equation~\eqref{1} in $\R$ admits a unique planar traveling front $\phi(x-ct)$ solving 
\begin{align}
\label{TW}
\begin{cases}
\phi''+c\phi'+f(\phi)=0\ \text{in}\ \mathbb{R},\vspace{3pt}\\
\phi(-\infty)=1,\ \ \phi(+\infty)=0,\ \ 0<\phi<1\ \text{in}\ \mathbb{R},\ \ \phi(0)=\theta,
\end{cases}
\end{align}
see, for instance, \cite{AW1978,FM1977,K1962}. The profile $\phi$ is then a connection between the stable steady states~$1$ and~$0$. Moreover, $\phi'<0$ in $\mathbb{R}$, and $c$ is positive since $f$ has a positive integral over $[0,1]$. The traveling front $\phi(x-ct)$ is invariant in the moving frame with speed $c$, and it attracts as $t\to+\infty$ a large class of front-like solutions of the associated Cauchy problem, see~\cite{FM1977}. It is also known that $\phi$ (resp. $1-\phi$) decays exponentially fast at $+\infty$ (resp. $-\infty$), that is,
\begin{align}
\label{phi}
\left\{\baa{lll}
c_1e^{-\mu^* z}\le \phi(z)\le C_1e^{-\mu^* z}, &z\ge0, &\ds\text{with }\mu^*=\frac{c+\sqrt{c^2-4f'(0)}}{2}>0,\vspace{3pt}\\
c_2e^{\mu_* z}\le 1-\phi(z)\le C_2e^{\mu_* z}, &z<0, &\ds\text{with }\mu_*=\frac{-c+\sqrt{c^2-4f'(1)}}{2}>0,
\eaa\right.
\end{align}
where $c_1,c_2, C_1$ and $C_2$ are positive constants. The derivative $\phi'(z)$ also satisfies
\begin{align}
\label{phi'}
\begin{cases}
c_3e^{-\mu^* z}\le -\phi'(z)\le C_3e^{-\mu^* z}, &z\ge0,\\
c_4e^{\mu_* z}\le -\phi'(z)\le C_4e^{\mu_* z}, &z<0,
\end{cases}
\end{align}
with positive constants $c_3,c_4, C_3$ and $C_4$. Such planar fronts exist under the assumptions \eqref{f-bistable-1}-\eqref{f-bistable-2}, whereas if $f$ satisfies~\eqref{f-bistable-1} only, fronts connecting $0$ and $1$ do not exist in general, see~\cite{FM1977} for more precise conditions for the existence and non-existence. \textit{Throughout this paper, we assume that $f$ satisfies \eqref{f-bistable-1}-\eqref{f-bistable-2} and that $\phi$ and $c>0$ are uniquely defined as in~\eqref{TW}.}


\subsection{Notations}

We focus in this paper on the case of equation~\eqref{1} set in unbounded domains of $\mathbb{R}^N$, made up of a straight part and a conical part: we assume that the left (say, with respect to the direction~$x_1$) part of~$\Omega$, namely $\Omega^-=\Omega\cap\{x\in\R^N:x_1\le 0\}$, is a straight half-cylinder in the direction $-x_1$ with cross section of radius $R>0$, while the right part, namely $\Omega^+=\Omega\backslash\Omega^-$, is a cone-like set with respect to the $x_1$-axis and with opening angle~$\alpha\ge0$. More precisely, we assume that $\Omega$ is rotationally invariant with respect to the $x_1$-axis, that is,
\begin{equation}
\label{defOmega}
\Omega=\big\{x=(x_1,x')\in\mathbb{R}^N :\ x_1\in\mathbb{R},\ |x'|<h(x_1)\big\},
\end{equation}
where $|\ |$ denotes the Euclidean norm, and that $h: \mathbb{R}\to \mathbb{R}^+$ is a $C^{2,\beta}(\R)$ (with $0<\beta<1$) function satisfying the following properties:
\be\label{h}
\left\{\baa{ll}
0\le h '\le \tan \alpha\text{ in }\mathbb{R}, & \text{for some angle }\alpha\in[0,\pi/2),\vspace{3pt}\\
h=R\text{ in }(-\infty,0], & \text{for some radius}~R>0,\vspace{3pt}\\
h(x_1)=x_1\,\tan\alpha\text{ in}\  [L\cos \alpha,+\infty), & \text{for some} \ L>R,\eaa\right.
\ee
see Figure~1. Such a domain is then called ``funnel-shaped''. In the particular limit case $\alpha=0$, the domain $\Omega$ amounts to a straight cylinder in $\mathbb{R}^N$ with cross section of radius $R$.  Notice that, when $\alpha>0$, the cross section is unbounded as $x_1\to+\infty$. To emphasize the dependence on $R$ and $\alpha$, we will also use the notation $\Omega_{R,\alpha}$ for convenience. The domains $\Omega_{R,\alpha}$ are not uniquely defined by~\eqref{defOmega}-\eqref{h}, and they also depend on the parameter $L$ in~\eqref{h}, but only the parameters $R>0$ and $\alpha\in[0,\pi/2)$ will play an important role in our study (except in Theorem~\ref{thm6} below). Other domains which have a globally similar shape, but may be only asymptotically straight in the left part or asymptotically conical in the right part could have been considered, at the expense of less precise estimates and more technical calculations. Since the domains satisfying~\eqref{defOmega}-\eqref{h} lead to a variety of interesting and non-trivial phenomena, we restrict ourselves to~\eqref{defOmega}-\eqref{h} throughout the paper. 

\begin{figure}[H]
	\centering
	\includegraphics[scale = 1.1]{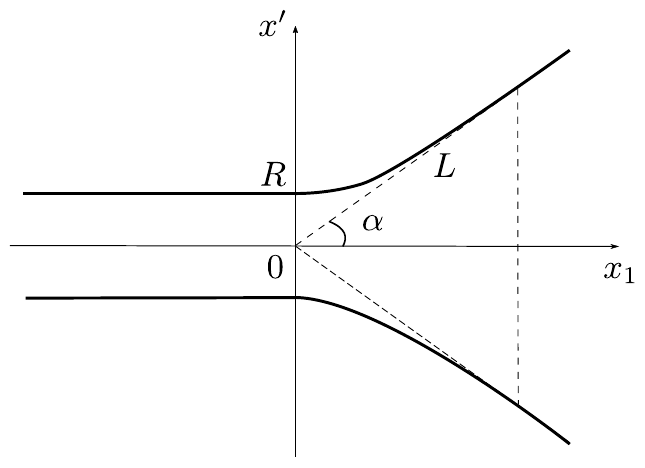}
	\caption{\small Schematic figure of the domain $\Omega_{R,\alpha}$ for $R>0$ and $\alpha\in(0,\pi/2)$.}
\end{figure}

If the domain is a straight cylinder in the direction $x_1$ (this happens in the case $\alpha=0$), then the planar front $\phi(x_1-ct)$ given by~\eqref{TW} solves~\eqref{1} (furthermore, up to translation, any transition front connecting $0$ and $1$ in the sense of Definition~\ref{Def1} below is equal to that front, see~\cite{GHS2020,H2016}). Here a domain $\Omega=\Omega_{R,\alpha}$ satisfying~\eqref{defOmega}-\eqref{h} is straight in its left part only, and the standard planar front $\phi(x_1-ct)$ does not fulfill the Neumann boundary conditions when $\alpha>0$. But it is still very natural to consider solutions of~\eqref{1} behaving in the past like the planar front $\phi(x_1-ct)$ coming from the left part of the domain, and to investigate the outcome of these solutions as they move into the right part of the domain. More precisely, we consider time-global solutions of~\eqref{1} emanating from the planar front $\phi(x_1-ct)$, that is,
\begin{equation}\label{initial}
u(t,x)-\phi(x_1-ct)\to 0\ \text{as}\ t\to-\infty,\ \text{uniformly with respect to}\ x\in\overline\Omega
\end{equation}
(notice that, in the right part $\Omega^+$ of $\Omega$, this condition simply means that $u(t,\cdot)\to0$ as $t\to-\infty$ uniformly in $\overline{\Omega^+}$). We will see that such solutions exist and are unique, and the main goal of the paper is to study their behavior as $t\to+\infty$, in terms of the parameters $R$ and $\alpha$.


\subsection{Background}

To describe the dynamical properties of the solutions of~\eqref{1} satisfying~\eqref{initial}, we use the unifying notions of generalized traveling fronts, called transition fronts, introduced in \cite{BH2007,BH2012}. In order to recall these notions of transition fronts and that of global mean speed, let us introduce some notations. Let $d_{\Omega}$ be the geodesic distance in $\overline\Omega$ (with respect to the Euclidean distance~$d$ in $\R^N$). For any two subsets $A$ and $B$ of $\overline{\Omega}$, we set
\begin{equation*}
d_\Omega(A,B)=\inf\big\{d_\Omega(x,y):(x,y)\in A\times B\big\},
\end{equation*}
and $d_\Omega(x,A)=d_\Omega(\{x\},A)$ for $x\in\overline{\Omega}$. We also use similar definitions with $d$, instead of $d_\Omega$, for the Euclidean distance between subsets of $\R^N$. Consider now two families $(\Omega^-_t)_{t\in\mathbb{R}}$ and $(\Omega^+_t)_{t\in\mathbb{R}}$ of open non-empty subsets of $\Omega$ such that 
\begin{equation}
\label{1.7}
\begin{aligned}
\forall\,t\in\mathbb{R},\ 
\begin{cases}
\Omega^-_t\cap\Omega^+_t=\emptyset,\ \ \partial\Omega^-_t\cap\Omega=\partial\Omega^+_t\cap\Omega=:\Gamma_t\neq\emptyset,\ \ \Omega^-_t\cup\Gamma_t\cup\Omega^+_t=\Omega,\vspace{3pt}\\
\sup\big\{d_\Omega(x,\Gamma_t):x\in\Omega^+_t\big\}=\sup\big\{d_\Omega(x,\Gamma_t):x\in\Omega^-_t\big\}=+\infty
\end{cases}
\end{aligned}
\end{equation}
and
\begin{equation}
\label{1.8}
\begin{aligned}
\begin{cases}
\inf\big\{\sup\{d_\Omega(y,\Gamma_t):y\in\Omega^+_t,d_\Omega(y,x)\le r\}:t\in\mathbb{R},x\in\Gamma_t\big\}\to+\infty\vspace{3pt}\\
\inf\big\{\sup\{d_\Omega(y,\Gamma_t):y\in\Omega^-_t,d_\Omega(y,x)\le r\}:t\in\mathbb{R},x\in\Gamma_t\big\}\to+\infty
\end{cases}
~\text{as}~r\to+\infty.
\end{aligned}
\end{equation}
Condition~\eqref{1.8} says that for any $M>0$, there is $r_M>0$ such that for every $t\in\mathbb{R}$ and $x\in\Gamma_t$, there are $y^\pm=y^\pm_{t,x}\in\mathbb{R}^N$ such that 
\begin{equation}
\label{1.9}
y^\pm\in\Omega^\pm_t,~d_\Omega(x,y^\pm)\le r_M~\text{and}~d_\Omega(y^\pm,\Gamma_t)\ge M.
\end{equation}
In other words, any point on $\Gamma_t$ is not too far from the centers of two large balls (in the sense of the geodesic distance in $\overline{\Omega}$) included in $\Omega^-_t$ and $\Omega^+_t$, this property being uniform with respect to $t$ and to the point on $\Gamma_t$. Moreover, in order to avoid interfaces with infinitely many twists, the sets~$\Gamma_t$ are assumed to be included in finitely many graphs: there is an integer $n\ge 1$ such that, for each $t\in\mathbb{R}$, there are $n$ open subsets $\omega_{i,t}\subset\mathbb{R}^{N-1}$ (for $1\le i\le n$), $n$ continuous maps $\psi_{i,t}:\omega_{i,t}\to \mathbb{R}$ and $n$ rotations $R_{i,t}$ of $\mathbb{R}^N$ with
\begin{equation}
\label{1.10}
\Gamma_t\subset \bigcup\limits_{1\le i\le n} R_{i,t}\big(\big\{x=(x',x_N)\in\mathbb{R}^N:x'\in\omega_{i,t},x_N=\psi_{i,t}(x')
\big\}\big).
\end{equation}

\begin{definition}[$\!\!$\cite{BH2007,BH2012}] 
\label{Def1}	
For problem \eqref{1}, a transition front connecting $1$ and $0$ is a classical solution $u: \mathbb{R}\times\overline\Omega\to (0,1)$ for which there exist some sets $(\Omega^\pm_t)_{t\in\mathbb{R}}$ and $(\Gamma_t)_{t\in\mathbb{R}}$ satisfying \eqref{1.7}-\eqref{1.10} and for every $\varep>0$ there exists $M_\varep>0$ such that 
\begin{equation}
\label{1.11}
\begin{aligned}
\begin{cases}
\forall\,t\in\mathbb{R},~\forall\,x\in\overline{\Omega^+_t},~~d_\Omega(x,\Gamma_t)\ge M_\varep~\Longrightarrow ~u(t,x)\ge 1-\varep,\vspace{3pt}\\
\forall\,t\in\mathbb{R},~\forall\,x\in\overline{\Omega^-_t},~~d_\Omega(x,\Gamma_t)\ge M_\varep~\Longrightarrow ~u(t,x)\le \varep.
\end{cases}
\end{aligned}
\end{equation}
Furthermore, $u$ is said to have a global mean speed $\gamma\in[0,+\infty)$ if 
\begin{equation*}
\frac{d_\Omega(\Gamma_t,\Gamma_s)}{|t-s|}\to\gamma~~~\text{as}~~|t-s|\to+\infty.
\end{equation*}
\end{definition}

This definition has been shown in \cite{BH2007,BH2012, H2016} to cover and unify all classical cases of traveling fronts in various situations. Condition~\eqref{1.11} means that the transition between the steady states $1$ and $0$ takes place in some uniformly-bounded-in-time neighborhoods of $\Gamma_t$. For a given transition front connecting~$1$ and~$0$, the families $(\Omega^\pm_t)_{t\in\mathbb{R}}$ and $(\Gamma_t)_{t\in\mathbb{R}}$ satisfying~\eqref{1.7}-\eqref{1.11} are not unique, but the global mean speed $\gamma$, if any, does not depend on the choice of the families~$(\Omega^\pm_t)_{t\in\mathbb{R}}$ and $(\Gamma_t)_{t\in\mathbb{R}}$, see~\cite{BH2012}.

Before stating the main results of this paper, let us recall here some related works on the role of the geometry of $\Omega$ on propagation phenomena for equations of the type~\eqref{1}. It was shown in \cite{CG2005} that, for $\Omega$ being a succession of two semi-infinite straight cylinders with square cross sections of different sizes $r$ and $R$, the solution $u$ emanating from the planar front $\phi(x_1-ct)$ in the left half-cylinder with smaller section and going to the right one with larger section can be blocked, in the sense that
\begin{equation}
\label{blocking}
u(t,x)\to u_\infty(x) ~\text{as}~t\to+\infty ~\text{locally uniformly in}~x\in\overline\Omega,~~\text{with}~u_\infty(x)\to 0 ~\text{as}~x_1\to+\infty.
\end{equation}
Later, propagation and blocking phenomena for different kinds of cylindrical domains with uniformly bounded cross sections were investigated in~\cite{BBC2016}.\footnote{The existence and uniqueness of time-global solutions emanating from planar fronts in more general asymptotically straight cylindrical domains was also proved in~\cite{P2018}.}  Especially, if the section of the cylindrical domain is non-increasing with respect to $x_1$, or if it is non-decreasing, large enough, and axially star-shaped, then the solution of~\eqref{1} emanating from the planar front $\phi(x_1-ct)$ propagates completely in the sense that
\begin{equation}
\label{complete}
u(t,x)\to 1 ~~\text{as}~t\to+\infty ~\text{locally uniformly in}~x\in\overline{\Omega}.
\end{equation}
However, under some other geometrical conditions (when typically, the cross section is narrow and then becomes abruptly much wider), blocking phenomena can occur, in the sense of~\eqref{blocking}. Further propagation and/or blocking phenomena were also shown for bistable equations set in the real line $\R$ (with periodic heterogeneities~\cite{DHZ2015,DLL2018,DGM2014,HFR2010,X1993,XZ1995,Z2017}, with local defects~\cite{BRR2013,CS2011,LK2000,N2015,P1981,SS1997}, or with asymptotically distinct left and right environments~\cite{E2018}), as well as in straight infinite cylinders with non-constant drifts~\cite{E2019,E2020}, and in some periodic domains~\cite{DR2018} or the whole space with periodic coefficients~\cite{D2016,GR2020}. In~\cite{RRBK2008}, a reaction-diffusion model was considered to analyse the effects on population persistence of simultaneous changes in the position and shape of a climate envelope. Recently, the existence and characterization of the global mean speed of transition fronts in domains with multiple cylindrical branches were investigated in~\cite{GHS2020}. It was proved that the front-like solutions emanating from planar fronts in some branches and propagating completely are transition fronts moving with the planar speed $c$ and eventually converging to planar fronts in the other branches. The classification of such fronts in domains with multiple asymptotically cylindrical branches was shown in~\cite{G2020}.

Meanwhile, the interaction between smooth compact obstacles $K\subset\R^N$ and a bistable planar front $\phi(x_1-ct)$ was studied in \cite{BHM2009}. An entire solution $u(t,x)$ converging to $\phi(x_1-ct)$ as $t\to-\infty$ uniformly in $\overline{\Omega}=\mathbb{R}^N\backslash\mathring{K}$ was constructed in \cite{BHM2009}. It was also proved that if the obstacle~$K$ is star-shaped or directionally convex with respect to some hyperplane, then the solution passes the obstacle in the sense that $u(t,x)$ converges to $\phi(x_1-ct)$ as $t\!\to\!+\infty$ uniformly in $\overline{\Omega}$. In particular, the propagation is then complete in the sense of~\eqref{complete}. Furthermore, the solution is a transition front connecting $0$ and $1$, in the sense of Definition~\ref{Def1}, and one can choose $\Gamma_t=\{x\in\Omega=\mathbb{R}^N\backslash K: x_1=ct\}$ in~\eqref{1.7} (the transition front is then said to be almost planar). Moreover, the authors constructed non-convex obstacles $K$ for which the solution~$u$ emanating from the bistable planar front $\phi(x_1-ct)$ as $t\to-\infty$ does not pass the obstacle completely, in the sense that~\eqref{complete} is not fulfilled. Furthermore, it follows from~\cite{GHS2020} that all transition fronts connecting $1$ and $0$ propagate completely and have a global mean speed equal to the planar speed $c$ (examples of such fronts are the almost-planar fronts given in~\cite{BHM2009} and the $V$-shaped fronts constructed in~\cite{GM2019}). The solutions which do not propagate completely are still transition fronts, but they connect $0$ and a steady state less than $1$ in $\overline{\Omega}$, see~\cite{GHS2020}.
 
Unlike the cylindrical domains with two branches considered in~\cite{BBC2016,CG2005,E2019,E2020} or with multiple branches considered in~\cite{G2020,GHS2020}, the domains $\Omega=\Omega_{R,\alpha}$ given by~\eqref{defOmega}-\eqref{h} have sections which are not uniformly bounded, as soon as $\alpha>0$. Natural questions are to derive estimates, as~$t\to+\infty$, on the location and shape of the level sets of the solutions of~\eqref{1} satisfying~\eqref{initial}, and also to know whether the solutions remain front-like in the sense of Definition~\ref{Def1}. We also study in this paper the role of the geometrical parameters $R$ and $\alpha$ on the propagation or blocking phenomena. Since standard planar traveling fronts do not exist anymore in such domains (as soon as $\alpha>0$), the analysis of the spreading properties of the solutions of~\eqref{1} is much more complex than in the one-dimensional case or the case of straight cylinders. First of all, the existence and uniqueness of the entire solution $u$ of~\eqref{1} satisfying~\eqref{initial} is derived as in~\cite{BBC2016,BHM2009,P2018}. Then, we will show that the blocking or complete propagation properties,~\eqref{blocking} or~\eqref{complete}, are the only possible outcomes of the solution $u$ at large time. We will see that $u$ is always a transition front connecting $1$ and $0$ and that it has a global mean speed, equal to $c$, if the propagation is complete. It is worth to mention that the solution can never go ahead of the planar front $\phi(x_1-ct)$, as that planar front is a supersolution for~\eqref{1}. We will actually show that, if $\alpha>0$, and even if the propagation is complete, the solution lags far behind the planar front $\phi(x_1-ct)$ in the direction of $x_1$ at $t\to+\infty$, in the sense that any level set of $u$ is well approximated by the expanding spherical surface of radius $ct-((N-1)/c)\ln t+O(1)$ and is asymptotically locally planar. Then, we will give some sufficient conditions related to the parameters~$(R,\alpha)$ so that $u$ will propagate completely or be blocked. Moreover, we will also prove the openness of the set of parameters $(R,\alpha)\in(0,+\infty)\times(0,\pi/2)$ for which~$u$ propagates completely. In short, our results will then give a refined picture of the spatial shape and temporal dynamics of the level sets of front-like solutions in funnel-shaped domains, a geometrical configuration which had not been investigated before.


\subsection{General properties for any given $(R,\alpha)$}\label{Sec-1.3}

Our first result is the well-posedness of problem~\eqref{1} with the asymptotic past condition~\eqref{initial} as $t\to-\infty$, for any given $R>0$ and $\alpha\in[0,\pi/2)$.

\begin{proposition}
\label{thm1}
For any $R>0$ and $\alpha\in[0,\pi/2)$, problem~\eqref{1} admits a unique entire solution $u(t,x)$ emanating from the planar front $\phi(x_1-ct)$, in the sense of~\eqref{initial}. Moreover, $u_t(t,x)>0$ and $0<u(t,x)<1$ for all $(t,x)\in\mathbb{R}\times\overline{\Omega}$, and there exists $u_\infty(x)=\lim\limits_{t\to+\infty}u(t,x)$ in~$C^2_{loc}(\overline\Omega)$ satisfying $0<u_\infty(x)\le 1$ in~$\overline{\Omega} $ and
\begin{align}
\label{u-infty}
\begin{cases}
\Delta u_\infty+f(u_\infty)=0 &\text{in}\ \overline{\Omega},\vspace{3pt}\\
\nu\cdot\nabla u_\infty =0   &\text{on}\ \partial\Omega.
\end{cases}
\end{align}
Lastly, for each $t\in\R$, the function $u(t,\cdot)$ is axisymmetric with respect to the $x_1$-axis, that is, it only depends on $x_1$ and $|x'|$, with $x'=(x_2,\cdots,x_N)$.
\end{proposition}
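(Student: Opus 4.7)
My plan is to construct the entire solution $u$ as the limit of Cauchy problems initiated at times $-n\to-\infty$, following the standard approach developed in~\cite{BHM2009, BBC2016, P2018}. The key preliminary observation is that $\phi(x_1-ct)$ is a supersolution of~\eqref{1} on $\Omega_{R,\alpha}$: it solves the PDE exactly, and on $\partial\Omega$ the outward unit normal has $x_1$-component $\nu_1=-h'(x_1)/\sqrt{1+h'(x_1)^2}\leq 0$ (since $h'\geq 0$), so $\nu\cdot\nabla\phi(x_1-ct)=\nu_1\phi'(x_1-ct)\geq 0$ using $\phi'<0$. For each integer $n\geq 1$, I let $v_n$ denote the unique classical solution of~\eqref{1} on $(-n,+\infty)\times\overline\Omega$ with initial condition $v_n(-n,x)=\phi(x_1+cn)$. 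The parabolic comparison principle yields $0\leq v_n(t,x)\leq \phi(x_1-ct)$, and since $v_{n+1}(-n,x)\leq \phi(x_1+cn)=v_n(-n,x)$, the sequence $(v_n)$ is nonincreasing in $n$; standard parabolic Schauder estimates then give convergence in $C^{2,\beta}_{\mathrm{loc}}(\mathbb{R}\times\overline\Omega)$ to an entire solution $u$ of~\eqref{1} satisfying $0\leq u(t,x)\leq \phi(x_1-ct)$.

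The most delicate step is verifying the uniform asymptotic condition~\eqref{initial}. The upper bound $u\leq \phi(x_1-ct)$ already supplies half. For the reverse, one partitions $\overline\Omega$: on $\overline{\Omega^+}$, the estimate $\phi(x_1-ct)\leq \phi(-ct)\to 0$ as $t\to-\infty$ makes it trivial; on the straight cylinder $\overline{\Omega^-}$, the planar front exactly satisfies the Neumann problem (since $h'=0$ there), and I would construct a subsolution of the form $\underline u(t,x)=\phi\bigl(x_1-ct+\sigma(1-e^{\kappa t})\bigr)-Ke^{\kappa t}$ for suitable $\sigma,K,\kappa>0$, using the exponential decay estimates~\eqref{phi}--\eqref{phi'} to absorb the mismatch across the geometric transition near $x_1=0$. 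Comparing $v_n$ with an appropriate time-shift of $\underline u$ and passing to the limit yields $u(t,x)\geq \phi(x_1-ct)-o(1)$ uniformly on $\overline\Omega$ as $t\to-\infty$. This gluing of the cylindrical and conical regions is the principal obstacle. Uniqueness then follows by a standard argument: if $u_1, u_2$ both satisfy~\eqref{1}--\eqref{initial}, then $\|u_1(t,\cdot)-u_2(t,\cdot)\|_\infty\to 0$ as $t\to-\infty$, and the parabolic maximum principle applied to $u_1-u_2$ (which solves a linear equation with bounded coefficient and homogeneous Neumann datum) propagates equality to all later times.

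The strict time-monotonicity $u_t>0$ is derived from that of the approximations: setting $w_n=\partial_t v_n$, one has $\partial_t w_n=\Delta w_n+f'(v_n)\,w_n$ with homogeneous Neumann condition, and at $t=-n$ the computation $w_n(-n,x)=\phi''(x_1+cn)+f(\phi(x_1+cn))=-c\,\phi'(x_1+cn)>0$ (based on~\eqref{TW}), combined with the parabolic strong maximum principle and Hopf's lemma on $\partial\Omega$, gives $w_n>0$ for all $t>-n$. Passing to the limit yields $u_t\geq 0$, and strict inequality follows because $u_t\equiv 0$ somewhere would force $u$ stationary by the strong maximum principle, contradicting~\eqref{initial}. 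The bounds $0<u<1$ then hold: $u<1$ from $u\leq \phi(x_1-ct)<1$, and $u>0$ from the strong maximum principle applied to $u$ itself (using $f(0)=0$ together with $u\not\equiv 0$).

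For the large-time limit, the monotonicity $u_t>0$ and the bound $u<1$ give the pointwise existence of $u_\infty(x)=\lim_{t\to+\infty}u(t,x)$; parabolic Schauder estimates applied to the time-shifts $u(\cdot+\tau,\cdot)$ ensure local boundedness in $C^{2+\beta,1+\beta/2}$, and the uniqueness of the pointwise monotone limit upgrades the convergence to $C^2_{\mathrm{loc}}(\overline\Omega)$, so that $u_\infty$ is a time-independent classical solution of~\eqref{u-infty} with $0<u_\infty\leq 1$. Axisymmetry of $u(t,\cdot)$ about the $x_1$-axis is immediate from uniqueness: $\Omega_{R,\alpha}$ and the initial data $\phi(x_1+cn)$ are both rotationally invariant about that axis, so each $v_n(t,\cdot)$ is axisymmetric by uniqueness of the Cauchy problem, and this property passes to the limit.
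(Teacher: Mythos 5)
Your overall architecture (approximate by Cauchy problems started at $t=-n$, sandwich between sub- and supersolutions, pass to the limit) is the same as the paper's, and the supersolution $\phi(x_1-ct)$ is identified correctly. But three steps contain genuine gaps. First, the proposed subsolution $\underline u(t,x)=\phi(x_1-ct+\sigma(1-e^{\kappa t}))-Ke^{\kappa t}$ is \emph{not} a subsolution of the Neumann problem on the conical part of the boundary: there $\nu_1=-h'/\sqrt{1+h'^2}<0$ and $\phi'<0$, so $\nu\cdot\nabla\underline u=\nu_1\phi'>0$, which is the wrong sign (a subsolution needs $\nu\cdot\nabla\underline u\le0$). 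A boundary-condition violation cannot be ``absorbed'' by interior exponential decay; the comparison principle requires the inequality pointwise on $\partial\Omega$. This is precisely why the paper's subsolution~\eqref{defw-} is the reflected difference $\phi(x_1-ct+\xi(t))-\phi(-x_1-ct+\xi(t))$ extended by $0$ on $\{x_1\ge0\}$: it is supported where the boundary is a straight cylinder ($\nu_1=0$), so the Neumann inequality holds trivially. Your $\underline u$ could be repaired by replacing it with $\max(\underline u,0)$ and tuning $K,\kappa$ so that its positivity set stays inside $\{x_1<0\}$ for $t\ll0$, but as written the key comparison fails.

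Second, the monotonicity argument is broken at its base: the initial datum $\phi(x_1+cn)$ does not satisfy the Neumann condition on the conical boundary, so $\partial_t v_n(-n^+,\cdot)$ is not given by $\Delta v_n(-n,\cdot)+f(v_n(-n,\cdot))$ up to $\partial\Omega$ (there is an initial boundary layer, and $\phi(x_1+cn)$ is neither a stationary sub- nor supersolution of the Neumann problem), so you cannot conclude $w_n>0$. The paper avoids this by taking $u_n(-n,\cdot)=\sup_{s\le-n}w^-(s,\cdot)$ with $w^-$ a genuine subsolution, which forces $u_n(t,\cdot)\ge u_n(-n,\cdot)$ and hence $u_n$ nondecreasing in $t$; note also that this makes the approximating sequence \emph{increasing} in $n$, whereas yours is decreasing, which is why you lose the monotonicity for free. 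Third, the uniqueness argument via ``the maximum principle propagates equality from $t=-\infty$'' does not close: Gr\"onwall gives $\|u_1(t,\cdot)-u_2(t,\cdot)\|_\infty\le e^{K(t-t_0)}\|u_1(t_0,\cdot)-u_2(t_0,\cdot)\|_\infty$ with $K=\max|f'|>0$, and the exponential blows up as $t_0\to-\infty$ exactly as the initial gap tends to $0$. One needs the bistable structure: the paper compares $v$ with the shifted, perturbed supersolution $\min\big(u(t+\sigma\epsilon(1-e^{-\beta(t-t_0)}),x)+\epsilon e^{-\beta(t-t_0)},1\big)$, where the decaying additive error is absorbed near the stable states and the sliding in time absorbs it near the interface. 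Without some version of this argument, uniqueness (and hence your derivation of axisymmetry and of $u_t>0$ via uniqueness) is not established.
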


From the strong maximum principle, one has either $u_\infty\equiv 1$ in $\overline{\Omega}$, or $u_\infty<1$ in $\overline{\Omega}$. Notice also that, from~\eqref{initial} and the monotonicity in $t$, there holds $u_\infty(x)\to 1$ as $x_1\to-\infty$ uniformly in $|x'|\le R$. The proof of Proposition~\ref{thm1} follows from the construction of a sequence of Cauchy problems and of some suitable sub- and supersolutions, as in~\cite{BBC2016,BHM2009,E2018,P2018}. It will be just sketched in Section~\ref{Sec-2}.

Once the well-posedness of~\eqref{1} with the past condition~\eqref{initial} is established, we then focus on the large time dynamics of the solution $u$ given in Proposition~\ref{thm1}. It turns out that the complete propagation in the sense of~\eqref{complete} or the blocking in the sense of~\eqref{blocking} are the only two possible outcomes. Namely, we will show that the following dichotomy holds.

\begin{theorem}\label{thm2}
For any $R>0$ and $\alpha\in[0,\pi/2)$, let $u$ be the solution of~\eqref{1} and~\eqref{initial} given in Proposition~$\ref{thm1}$. Then, either $u$ propagates completely in the sense of~\eqref{complete}, or it is blocked in the sense of~\eqref{blocking} and then the convergence of $u(t,\cdot)$ to $u_\infty$ as $t\to+\infty$ in~\eqref{blocking} is uniform in $\overline{\Omega}$.
\end{theorem}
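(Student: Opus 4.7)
The plan is to exploit the pointwise monotone limit $u_\infty(x)=\lim_{t\to+\infty}u(t,x)$ furnished by Proposition~\ref{thm1}, which solves~\eqref{u-infty} and satisfies $0<u_\infty\le 1$. Applying the strong maximum principle and Hopf's lemma to $1-u_\infty$ immediately yields the basic dichotomy at the level of~$u_\infty$: either $u_\infty\equiv 1$ in $\overline\Omega$, or $u_\infty<1$ throughout $\overline\Omega$. If $u_\infty\equiv 1$, the pointwise monotone convergence $u(t,\cdot)\uparrow 1$ to a continuous limit is upgraded by Dini's theorem to locally uniform convergence on every compact subset of $\overline\Omega$, which is exactly~\eqref{complete}.

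In the nontrivial case $u_\infty<1$, I would first record that $u_\infty$ is a stable stationary solution in the generalised sense: since $u_t>0$ solves the linearised equation with Neumann boundary condition on $\overline\Omega$, a standard argument forces the generalised principal eigenvalue of $-\Delta-f'(u_\infty)$ on $\overline\Omega$ to be non-negative. The strategy is then to translate along the axis. Take any sequence $x^n=(x^n_1,0)\in\overline\Omega$ with $x^n_1\to+\infty$; since $\alpha>0$, one has $\mathrm{dist}(x^n,\partial\Omega)=x^n_1\tan\alpha\to+\infty$, so the translated domains $\Omega-x^n$ exhaust $\R^N$. By interior elliptic estimates and a diagonal extraction, the translates $v_n(\cdot):=u_\infty(\cdot+x^n)$ converge in $C^2_{loc}(\R^N)$ along a subsequence to some bounded $v:\R^N\to[0,1]$ solving $\Delta v+f(v)=0$ in $\R^N$ and inheriting the stability of~$u_\infty$.

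At this point I invoke the Liouville-type result announced in the abstract: every bounded stable solution $v:\R^N\to[0,1]$ of $\Delta v+f(v)=0$ under the bistable assumptions~\eqref{f-bistable-1}-\eqref{f-bistable-2} must be constant, and since $f'(\theta)>0$ destabilises the value~$\theta$, the only possibilities are $v\equiv 0$ and $v\equiv 1$. The main obstacle is to rule out $v\equiv 1$: were it to occur along some subsequence, then $u_\infty$ would be above $\theta+\eta$ on arbitrarily large Euclidean balls contained in~$\Omega$ for some $\eta>0$. A compactly supported radial subsolution strictly above~$\theta$ on a ball of sufficient radius (in the spirit of~\cite{AW1978,BHM2009}), together with the parabolic comparison principle applied to the Cauchy problem issued from it, would force $u_\infty\equiv 1$ throughout $\overline\Omega$, contradicting the standing assumption $u_\infty<1$. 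Hence $v\equiv 0$ along every such axial sequence; combined with the axial symmetry of $u_\infty$, elliptic Harnack estimates, and a parallel translation argument for sequences closer to the boundary (using Neumann reflection to stay within the $\R^N$ Liouville framework), one obtains $u_\infty(x)\to 0$ as $x_1\to+\infty$.

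For the uniform convergence in $\overline\Omega$ of $u(t,\cdot)$ to $u_\infty$ in the blocking regime, given $\varep>0$ choose $A>0$ large enough that $u_\infty\ge 1-\varep$ for $x_1\le -A$ (possible from~\eqref{initial} and time-monotonicity) and $u_\infty\le\varep$ for $x_1\ge A$ (just proved). On the bounded slab $\{x\in\overline\Omega:|x_1|\le A\}$, Dini's theorem provides uniform convergence. On $\{x_1\ge A\}$, the sandwich $0<u(t,x)\le u_\infty(x)\le\varep$ immediately gives $|u(t,x)-u_\infty(x)|\le\varep$. On $\{x_1\le -A\}$, the past asymptotics $u(t,x)-\phi(x_1-ct)\to 0$ uniformly together with $u_t>0$ guarantee $u(t,x)\ge 1-\varep$ for $t$ large, and combined with $u_\infty\le 1$ this yields $|u-u_\infty|\le\varep$. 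Putting the three regions together completes the proof of the dichotomy.
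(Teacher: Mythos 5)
Your overall architecture (pass to the monotone limit $u_\infty$, show it is stable, translate towards $x_1\to+\infty$, classify the limits by a Liouville theorem, and rule out the limit $1$ by a compactly supported subsolution as in Lemmas~\ref{lemsub}--\ref{lemliouville0}) is the paper's architecture. But there is a genuine gap at the central step: you invoke ``every bounded stable solution $v:\R^N\to[0,1]$ of $\Delta v+f(v)=0$ must be constant'' as a known Liouville theorem. No such unconditional statement is proved in the paper, and it is not available in general: the classical Berestycki--Caffarelli--Nirenberg logarithmic-cutoff argument that yields this in dimension $2$ (Proposition~\ref{proliouville2}) does not extend to arbitrary $N$. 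What the paper actually proves (Proposition~\ref{proliouville}) is the Liouville property for stable solutions that are \emph{axisymmetric}, and the axisymmetry is used essentially: one first reduces to the $2$D result to get $U(x_1,x')\to 0$ or $1$ as $|x'|\to+\infty$ uniformly in $x_1$, then proves exponential decay of $|\nabla U|$ in $|x'|$, and only then runs a cutoff argument with the anisotropic test function $\zeta(x_1/R^{N-1})\,\zeta(|x'|/R)$. Your axial translates do inherit axisymmetry from $u_\infty$, so the correct result would apply to them, but your proposal neither notices that this hypothesis is needed nor supplies the (nontrivial) proof of the axisymmetric Liouville theorem.

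The second, related gap is that controlling $u_\infty$ only along the axis does not give $u_\infty(x)\to0$ as $x_1\to+\infty$, because the cross-sections are unbounded when $\alpha>0$. The paper must classify subsequential limits along \emph{arbitrary} sequences with $x^n_1\to+\infty$, which splits into three regimes: bounded distance to the axis (axisymmetric $\R^N$ Liouville, Proposition~\ref{proliouville}), far from both axis and boundary (the rescaled limit depends on $(x_1,|x'|)$ only, giving a genuinely two-dimensional problem in the whole plane, Proposition~\ref{proliouville2}), and bounded distance to the conical boundary (a half-plane problem with Neumann condition, Proposition~\ref{proliouville2bis}); connectedness of $\{x_1\ge B\}$ then upgrades the subsequential dichotomy to a genuine limit. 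Your substitute --- ``elliptic Harnack estimates'' plus ``Neumann reflection to stay within the $\R^N$ Liouville framework'' --- does not close this: Harnack chains from the axis to a point at distance of order $x_1$ carry constants growing exponentially in $x_1$, so decay on the axis does not propagate across the widening section; and reflecting the half-space limit back into $\R^N$ produces a stable solution that is no longer axisymmetric, so again no available Liouville theorem applies to it (the paper instead reflects the already two-dimensional half-plane limit into $\R^2$). As a minor further point, your justification of the stability of $u_\infty$ (``$u_t>0$ solves the linearised equation'') linearises at $u(t,\cdot)$ rather than at $u_\infty$; the paper's Lemma~\ref{lemstable} makes this rigorous by testing $0\le u_t=\Delta(u-u_\infty)+f(u)-f(u_\infty)$ against $\psi^2/(u_\infty-u(t,\cdot))$ and passing to the limit $t\to+\infty$. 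The final paragraph on uniform convergence in the blocked case is fine and matches the paper.
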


\begin{remark}
When $\alpha=0$ in~\eqref{defOmega}-\eqref{h}, $\Omega$ amounts to a straight cylinder and, by uniqueness, the solution $u$ given in Proposition~$\ref{thm1}$ is nothing but the planar front $\phi(x_1-ct)$, hence the propagation is complete in this very particular case.
\end{remark}

Theorem~\ref{thm2} means that, under the notations of Proposition~\ref{thm1}, either $u_\infty\equiv 1$ in $\overline{\Omega}$, or $u_\infty(x)\to0$ as $x_1\to+\infty$. Any other more complex behavior is impossible. Theorem~\ref{thm2} is a consequence of the stability of the solution $u_\infty$ and of some Liouville type results for the stable solutions of some semilinear elliptic equations in the two-dimensional plane, or in a two-dimensional half-plane, or in the whole space $\R^N$ with axisymmetry. In order to give a flavor of these properties and results, which are also of independent interest, let us state here the definition of stability\footnote{For a thorough study of stable solutions of elliptic equations, we refer to the book~\cite{D2011}.} as well as one of the typical results shown in Section~\ref{Sec-3.2}. So, for a non-empty open connected set $\omega\subset\R^N$, we say that a $C^2(\overline{\omega})$ solution $U$ of $\Delta U+f(U)=0$ in~$\overline{\omega}$ is stable if
\be\label{stable}
\int_{\overline{\omega}}|\nabla\psi|^2-f'(U)\psi^2\ge0
\ee
for every $\psi\in C^1(\overline{\omega})$ with compact support (for instance, it turns out that the solution $u_\infty$ of~\eqref{u-infty} in $\overline{\Omega}$, given in Proposition~\ref{thm1}, is stable, see Lemma~\ref{lemstable} below). The following result, concerned with stable axisymmetric solutions, is also shown in Section~\ref{Sec-3.2}.

\begin{proposition}\label{proliouville}
Let $0\le U\le 1$ be a $C^2(\R^N)$ stable solution of $\Delta U+f(U)=0$ in $\R^N$. Assume that $U$ is axisymmetric with respect to the $x_1$-axis, that is, $U$ depends on $x_1$ and $|x'|$ only, with~$x'=(x_2,\cdots,x_N)$. Then, either $U\equiv0$ in $\R^N$ or $U\equiv1$ in $\R^N$.
\end{proposition}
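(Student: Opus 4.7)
The plan is to reduce the axisymmetric problem in $\mathbb{R}^N$ to the two-dimensional Liouville statement proven earlier in Section~\ref{Sec-3.2} via a blow-down at radial infinity, and then close by a rigidity step combining the strong maximum principle and the stability inequality. First, I would write $U(x)=V(x_1,r)$ with $r=|x'|$, so that $V\in C^2(\mathbb{R}\times[0,+\infty))$ satisfies
\[
V_{x_1x_1}+V_{rr}+\tfrac{N-2}{r}V_r+f(V)=0\ \text{in}\ \mathbb{R}\times(0,+\infty),\qquad V_r(\cdot,0)\equiv 0,
\]
and, restricting~\eqref{stable} to axisymmetric test functions $\Psi(x)=\psi(x_1,|x'|)$, the stability of $U$ becomes a weighted inequality on the half-plane with weight $r^{N-2}$.

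For the blow-down, fix sequences $x_1^n\in\mathbb{R}$ and $r_n\to+\infty$ and set $V_n(y_1,s):=V(y_1+x_1^n,s+r_n)$. The non-translation-invariant coefficient $(N-2)/(s+r_n)$ vanishes locally uniformly, and $0\le V_n\le 1$, so interior $C^{2,\beta}$ estimates yield $V_n\to\tilde V$ in $C^2_{loc}(\mathbb{R}^2)$ along a subsequence, with $\tilde V$ a bounded solution of $\Delta\tilde V+f(\tilde V)=0$ in $\mathbb{R}^2$. Plugging into the weighted stability the axisymmetric lift of any $\psi\in C^1_c(\mathbb{R}^2)$ centered at $(x_1^n,r_n)$ (supported in $\{r>0\}$ for $n$ large), dividing by $r_n^{N-2}$, and passing to the limit, shows that $\tilde V$ is also stable in $\mathbb{R}^2$. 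The 2D Liouville result from Section~\ref{Sec-3.2} then forces $\tilde V\in\{0,1\}$. Consequently, for every $\varepsilon\in(0,1/2)$ the set $\{V\in[\varepsilon,1-\varepsilon]\}$ is bounded in $r$, and continuity with the connectedness of $\mathbb{R}\times[R_\varepsilon,+\infty)$ gives that on this half-strip either $V<\varepsilon$ or $V>1-\varepsilon$ holds everywhere; letting $\varepsilon\to 0$ along a subsequence yields the dichotomy: $V(x_1,r)\to 0$, respectively $V(x_1,r)\to 1$, as $r\to+\infty$, uniformly in $x_1$.

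It remains to upgrade this dichotomy to $U\equiv 0$, respectively $U\equiv 1$. Replacing $U$ by $1-U$ and $f(s)$ by $-f(1-s)$ if necessary (a transformation preserving stability and swapping the two alternatives), assume $V\to 0$ uniformly at $r=+\infty$ and set $m:=\sup U$. If $m>0$, extract a maximizing sequence $(x_1^n,r_n)$ with $r_n$ bounded, translate in $x_1$, and pass to a $C^2_{loc}$-limit $U^\star$: an axisymmetric stable bounded solution of the same equation with $U^\star\le m$, $U^\star\to 0$ as $|x'|\to+\infty$ uniformly, and $U^\star(x_0)=m$ at some interior $x_0$. The PDE at $x_0$ forces $f(m)\ge 0$, so $m\in\{0\}\cup[\theta,1]$; the case $m=1$ is handled by the strong maximum principle applied to the nonnegative function $w:=1-U^\star$, which satisfies $\Delta w=f(1-w)\le Lw$ for $L:=\sup_{[0,1]}|f'|$, so that $w\equiv 0$ on the connected set $\mathbb{R}^N$, giving $U^\star\equiv 1$ and contradicting $U^\star\to 0$; the case $m=\theta$ follows from the observation that $U^\star\le\theta$ implies $\Delta U^\star=-f(U^\star)\ge 0$, hence $U^\star$ is globally subharmonic with interior maximum $\theta$, so $U^\star\equiv\theta$ by the strong maximum principle, again contradicting the decay at radial infinity. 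The remaining sub-case $m\in(\theta,1)$, in which $f(m)>0$ and the classical strong maximum principle does not directly apply to $m-U^\star$, is the main obstacle; it has to be handled by genuinely invoking the axisymmetric stability of $U^\star$, for instance via a second blow-down procedure applied to $U^\star$ or via a Hopf-type argument exploiting the axisymmetric geometry, and this is the step where a mere maximum-principle argument is insufficient.
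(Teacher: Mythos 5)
Your first step (blow-down at radial infinity, stability of the two-dimensional limit, reduction to the planar Liouville result, and the uniform dichotomy $U\to 0$ or $U\to 1$ as $|x'|\to+\infty$) is correct and is essentially the same as the first half of the paper's proof, modulo the harmless reformulation in the $(x_1,r)$ half-plane with weight $r^{N-2}$. The problem is the second half. You reduce to ruling out $m:=\sup U\in\{0\}\cup[\theta,1]$ with $U\to0$ at radial infinity, dispose of $m=\theta$ and $m=1$ by the strong maximum principle, and then explicitly leave the case $m\in(\theta,1)$ open, suggesting only that it ``has to be handled by genuinely invoking stability''. That is not a minor loose end: it is exactly where the content of the proposition lies. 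Indeed, ground states of $\Delta U+f(U)=0$ in $\R^N$ (which exist under~\eqref{f-bistable-1}--\eqref{f-bistable-2} by the variational construction recalled in Lemma~\ref{lemsub}, are radially symmetric about a point on the $x_1$-axis, take values in $[0,1)$, decay to $0$ at infinity, and have maximum in $(\theta,1)$) satisfy every hypothesis of the proposition except stability. So no argument based only on the maximum principle and the sign of $f$ can close this case; a quantitative use of~\eqref{stable} is unavoidable, and your proposal does not supply one.

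For comparison, the paper closes this step as follows: from the uniform limit $U\to0$ as $|x'|\to+\infty$ one first derives \emph{exponential} decay of $U$, hence of $|\nabla U|$, in $|x'|$ (uniformly in $x_1$), via the supersolution $\delta e^{-\gamma(|x'|-A)}$. Stability then yields (as in the proof of Proposition~\ref{proliouville2}, following \cite{BCN1997}) a positive function $\varphi$ with $-\Delta\varphi-f'(U)\varphi=\lambda_\infty\varphi\ge0$ in $\R^N$, and for each unit vector $e$ the ratio $w=(e\cdot\nabla U)/\varphi$ satisfies $w\,\nabla\cdot(\varphi^2\nabla w)\ge0$. Integrating this against the anisotropic cutoff $\zeta_R(x)=\zeta(x_1/R^{N-1})\,\zeta(|x'|/R)$ — whose gradient terms are controlled precisely thanks to the exponential decay of $|\nabla U|$ in $|x'|$ on the region $|x'|\sim R$, and by the $R^{1-N}$ smallness of $\partial_{x_1}\zeta_R$ on the region $|x_1|\sim R^{N-1}$ — forces $\int\varphi^2|\nabla w|^2=0$, so each directional derivative $e\cdot\nabla U$ is either identically zero or of strict constant sign. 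Taking $e=(0,e')$ and using the decay in $|x'|$ kills all transverse derivatives and gives $U\equiv0$. If you want to salvage your maximizing-sequence scheme, you would still need an argument of this type (or some other genuine use of~\eqref{stable}, e.g.\ testing stability against a suitable truncation of a ground state) to exclude $m\in(\theta,1)$; as written, the proof is incomplete.
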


Coming back to problem~\eqref{1} in funnel-shaped domains, we then turn to the study of the spreading properties and the behavior of the level sets of the solutions under the complete propagation condition~\eqref{complete} when $\alpha\in(0,\pi/2)$. In the sequel, we denote the level sets and the upper level sets of $u$ by:
\begin{equation}
\label{def-level set}
\mathcal{E}_\lambda(t)=\big\{x\in\overline{\Omega}: u(t,x)=\lambda\big\},\ \mathcal{U}_\lambda(t)=\big\{x\in\overline{\Omega}:u(t,x)>\lambda\big\},\hbox{ for }\lambda\in(0,1)\hbox{ and }t\in\R.
\end{equation}

\begin{theorem}
\label{thm3}
For any $R>0$ and $\alpha\in(0,\pi/2)$, let $u$ be the solution of~\eqref{1} and~\eqref{initial} given in Proposition~$\ref{thm1}$. If $u$ propagates completely in the sense of~\eqref{complete}, then it is a transition front connecting $1$ and $0$ with global mean speed $c$, and $(\Gamma_t)_{t\in\mathbb{R}}$, $(\Omega^{\pm}_t)_{t\in\mathbb{R}}$ in Definition~$\ref{Def1}$ can be defined by 
\begin{equation}
\label{Gamma_t}
\begin{cases}
\Gamma_t=\big\{x\in\Omega:x_1=ct\big\} & \hbox{for } t\le t_0,\vspace{3pt}\\
\ds\Gamma_t=\Big\{x\in\Omega:x_1>0\hbox{ and }|x|=ct-\frac{N-1}{c}\ln t\Big\} & \hbox{for }t>t_0,
\end{cases}
\end{equation}
and
\begin{align}
\label{Omega_t}
\begin{cases}
\Omega^{\pm}_t=\big\{x\in\Omega: \pm(x_1-ct)<0\big\} & \hbox{for }t\le t_0,\vspace{3pt}\\
\ds\Omega^+_t=\Big\{x\in\Omega: x_1\le0,\hbox{ or }x_1>0\hbox{ and }|x|<ct-\frac{N-1}{c}\ln t\Big\} & \hbox{for }t>t_0,\vspace{3pt}\\
\ds\Omega^-_t=\Big\{x\in\Omega: x_1>0\hbox{ and }|x|>ct-\frac{N-1}{c}\ln t\Big\} & \hbox{for }t>t_0,
\end{cases}
\end{align}
with $t_0>0$ large enough such that $ct-((N-1)/c)\ln t>L$ for all $t>t_0$.\footnote{We recall that $L$ is given in~\eqref{h}, with $L>R$.} Moreover, $u$ converges to planar fronts locally along its level sets as $t\to+\infty$: for any $\lambda\in(0,1)$, any sequence $(t_n)_{n\in\N}$ diverging to $+\infty$ and any sequence $(x_n)_{n\in\N}$ in $\overline{\Omega}$ such that $u(t_n,x_n)=\lambda$, then
\be\label{convtaun}
u(t+t_n,x+x_n)-\phi\Big(x\cdot\frac{x_n}{|x_n|}-ct+\phi^{-1}(\lambda)\Big)\longrightarrow 0\hbox{ in }C^{1,2}_{(t,x);loc}(\R\times\R^N)\hbox{ as $n\to+\infty$}
\ee
if $d(x_n,\partial\Omega)\to+\infty$ as $n\to+\infty$, and the same limit holds with the additional restriction $x+x_n\in\overline{\Omega}$ if $\limsup_{n\to+\infty}d(x_n,\partial\Omega)<+\infty$. Lastly, for every $\lambda\in(0,1)$, there exists $r_0>0$ such that the upper level set $\mathcal{U}_\lambda(t)$ satisfies
\begin{equation}
\label{upper level set}
S_{r(t)-r_0}\subset\mathcal{U}_\lambda(t)\subset S_{r(t)+r_0}
\end{equation}
for all $t$ large enough $($see Figure~$2$$)$, with $S_r$ and $r(t)$ given by
$$S_r=\overline{\Omega^-}\cup\big\{x\in\overline{\Omega}:|x|\le r\big\},\quad r(t)=ct-\frac{N-1}{c}\ln t.$$
\end{theorem}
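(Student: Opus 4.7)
The proof splits into three parts: (i) the radial level-set pinching \eqref{upper level set}, (ii) the transition front structure with the specified $(\Gamma_t,\Omega_t^\pm)$ and mean speed $c$, and (iii) the local convergence \eqref{convtaun}. The key quantitative ingredient is (i); once it is in hand, (ii) reduces to a direct verification of Definition~\ref{Def1}, and (iii) follows from parabolic compactness together with the uniqueness of bistable planar fronts.

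For the outer inclusion $\mathcal U_\lambda(t)\subset S_{r(t)+r_0}$, I would use a radial supersolution $\overline v(t,x)=\phi(|x|-R^+(t))$ with $R^+(t)=ct-((N-1)/c)\ln t+A$. A direct computation using $\phi''+c\phi'+f(\phi)=0$ gives
\[
\overline v_t-\Delta\overline v-f(\overline v)=\Bigl(\frac{N-1}{ct}-\frac{N-1}{|x|}\Bigr)\phi'(|x|-R^+(t))\ge 0
\]
in the transition zone $|x|=R^+(t)+O(1)<ct$ (valid for $t$ large), since the bracket is non-positive there and $\phi'<0$. On $\partial\Omega$ one has $\nu\cdot\nabla\overline v=\phi'(|x|-R^+(t))(\nu\cdot x)/|x|\le0$ because $\nu\cdot x\ge0$: the funnel-shaped $\Omega$ is starlike with respect to the origin, owing to $0\le h'\le\tan\alpha$ and $h\equiv R$ on $(-\infty,0]$. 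Choosing $t_0$ and $A$ large, using \eqref{complete} to ensure $\overline v(t_0,\cdot)\ge u(t_0,\cdot)$ on $\overline\Omega$, the parabolic comparison principle yields $u\le\overline v$ for all $t\ge t_0$. For the inner inclusion $S_{r(t)-r_0}\subset\mathcal U_\lambda(t)$, I would use a subsolution $\underline v(t,x)=\phi(|x|-R^-(t))-\eta(t)$ with $R^-(t)=ct-((N-1)/c)\ln t-A$ and a suitably chosen correction $\eta(t)\to 0$: the naive ansatz (with $\eta\equiv 0$) is actually a supersolution in the transition zone, and the corrector $\eta$ is what restores the subsolution property, with the negativity of $f'$ near $0$ and $1$ from \eqref{f-bistable-1} controlling the linearized error. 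The Neumann sign $\nu\cdot\nabla\underline v\le0$ on $\partial\Omega$ holds for the same starlike reason, and \eqref{complete} furnishes a large inner ball on which $u(t_0,\cdot)\ge 1-\varepsilon$ to initialize the comparison.

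With \eqref{upper level set} established, the geometric requirements \eqref{1.7}--\eqref{1.10} for the $(\Gamma_t,\Omega_t^\pm)$ defined by \eqref{Gamma_t}--\eqref{Omega_t} follow by direct inspection, and \eqref{1.11} combines \eqref{upper level set} with the exponential decay \eqref{phi} and the behavior $u(t,x)\approx\phi(x_1-ct)$ on the straight part for $t\le t_0$, the latter obtained by comparison of $u$ with suitable shifts of the planar front (noting $\nu\cdot\nabla\phi(x_1-ct)=\phi'\nu_1\le 0$ on $\partial\Omega$ since $\nu_1\ge 0$ and $\phi'\le0$). The global mean speed $\gamma=c$ follows from $(r(t)-r(s))/(t-s)\to c$ as $|t-s|\to+\infty$. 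For \eqref{convtaun}, given $(t_n,x_n)$ with $u(t_n,x_n)=\lambda$, \eqref{upper level set} forces $|x_n|=r(t_n)+O(1)$, so up to extraction $e_n=x_n/|x_n|\to e\in\overline{\Omega^+}\cap\S^{N-1}$. Standard parabolic estimates yield subsequential $C^{1,2}_{loc}$ convergence of $u(\cdot+t_n,\cdot+x_n)$ to an entire solution $u_\infty$ of the limit problem posed in all of $\R^N$ (if $d(x_n,\partial\Omega)\to+\infty$) or in a half-space with Neumann condition (otherwise). Squeezing $u_\infty$ between shifts of $\phi(x\cdot e-ct)$ via \eqref{upper level set} and using $\partial_t u_\infty\ge0$ from Proposition~\ref{thm1}, the uniqueness of bistable planar fronts identifies $u_\infty(t,x)=\phi(x\cdot e-ct+\phi^{-1}(\lambda))$. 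In the half-space case, the asymptotic normal to $\partial\Omega$ is orthogonal to $e$, so a plane wave in direction $e$ automatically satisfies the Neumann condition.

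The main technical obstacle is the sharp subsolution construction: the naive radial ansatz with the expected logarithmic correction is actually a supersolution (curvature slows expanding spheres), so restoring the subsolution property requires a careful choice of the corrector $\eta(t)$ together with a delicate use of the bistable structure to absorb the linearized error. It is this step that yields the $O(1)$ accuracy in \eqref{upper level set} and hence the existence of the global mean speed $c$ and the local convergence to planar fronts along level sets.
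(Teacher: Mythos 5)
Your overall strategy coincides with the paper's: radial Fife--McLeod type sub- and supersolutions carrying the logarithmic shift $((N-1)/c)\ln t$ give the level-set pinching \eqref{upper level set}, from which the transition-front structure, the mean speed $c$, and the local convergence \eqref{convtaun} (via parabolic compactness and the Liouville theorem of \cite{BH2007}) follow. However, the key quantitative step has a genuine gap on the supersolution side. The identity $\overline v_t-\Delta\overline v-f(\overline v)=\big(\frac{N-1}{ct}-\frac{N-1}{|x|}\big)\phi'(|x|-R^+(t))$ is correct, but the right-hand side is nonnegative only where $|x|\le ct$; for $|x|>ct$ (a nonempty region of $\overline{\Omega^+}$ at every time, since the cone is unbounded) the bracket is positive and $\phi'<0$, so $\overline v$ is \emph{not} a supersolution there, no matter how large $A$ is. The naive ansatz thus fails ahead of the front, just as the naive subsolution fails in the transition zone (which you do note). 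Both one-sided barriers require the same corrections: an additive term of the type $\delta e^{-\delta\vartheta(t)}+\delta e^{-\mu(|x|-L)}$ pushing the barrier into the range where $f'<0$, so that the bistable structure absorbs the exponentially small curvature error ahead of and behind the front, together with a time-dependent phase shift $\varrho(t)$ whose derivative dominates the error in the transition zone where $|\phi'|\ge\kappa>0$. This is precisely what the paper's Lemma~\ref{lemma-super sub} constructs.

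A second, related gap concerns the comparison domain: one cannot arrange $\overline v(t_0,\cdot)\ge u(t_0,\cdot)$ on all of $\overline\Omega$, since $\overline v(t_0,x)=\phi(|x|-R^+(t_0))\to0$ while $u(t_0,x)\to1$ as $x_1\to-\infty$. The comparison must be run on $\{x\in\overline{\Omega^+}:|x|\ge L\}$ only, which creates an inner spherical boundary $\{|x|=L\}$ on which the supersolution must dominate $u$ for all times; since $\phi(L-R^+(t))<1$ for every finite $t$, this again forces the lift by $\delta e^{-\mu(|x|-L)}$ and the truncation $\min(\cdot,1)$. Moreover, the natural starting time for the supersolution is a very negative $\tau_1$, where $u\le\delta$ on $\overline{\Omega^+}$ by \eqref{initial}, rather than a large positive $t_0$. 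Once the corrected barriers of Lemma~\ref{lemma-super sub} are in place, the rest of your argument --- verification of \eqref{1.7}--\eqref{1.11}, the mean speed, and the interior/boundary dichotomy with half-space reflection in the proof of \eqref{convtaun} --- matches the paper's proof.
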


\begin{figure}[H]
\centering
\includegraphics[scale=0.7]{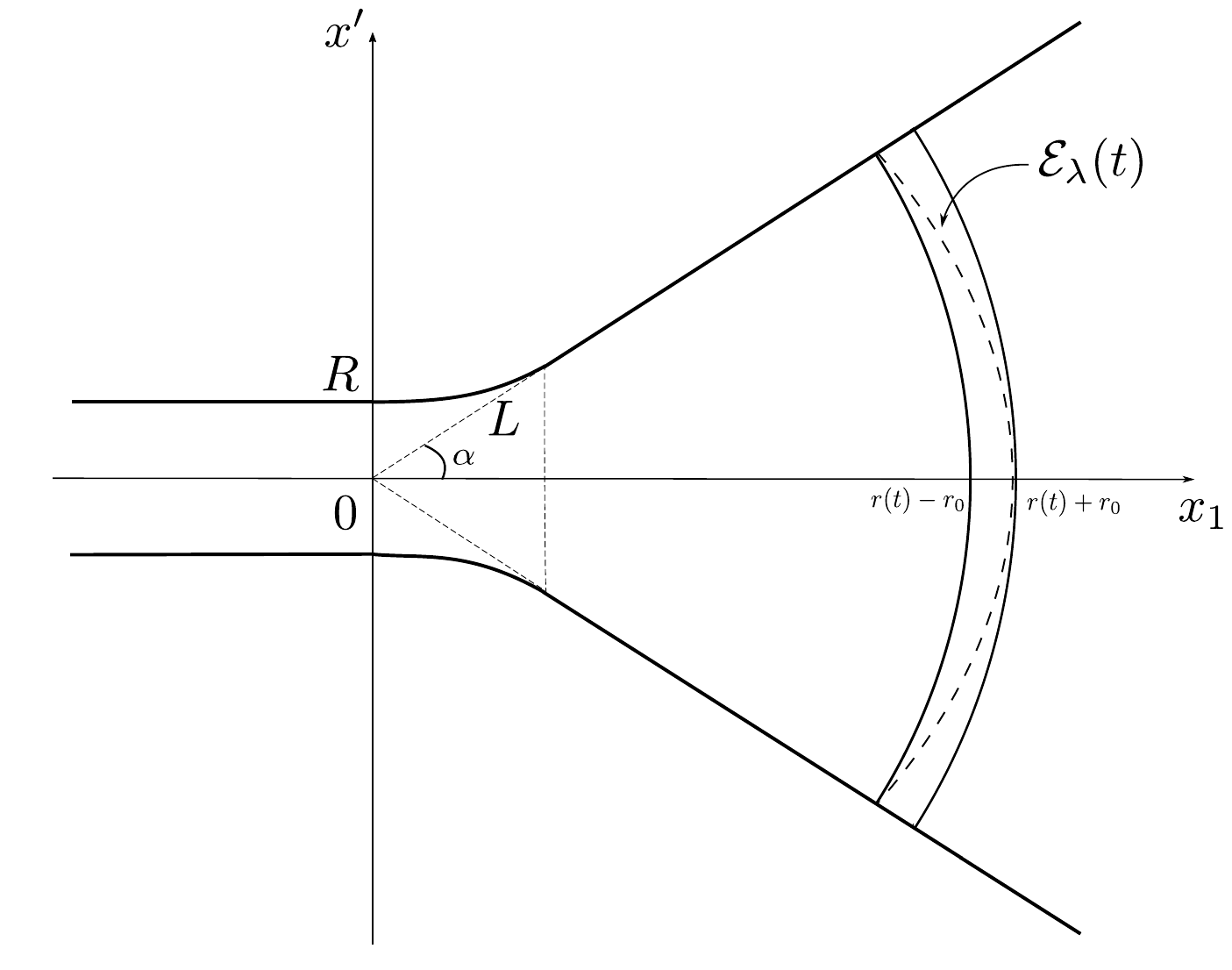}
\caption{\small Possible location of the level set $\mathcal{E}_\lambda(t)$ for $\lambda\in(0,1)$ and $t>0$ large.}
\end{figure}

In other words, the past condition~\eqref{initial} and the complete propagation condition~\eqref{complete} guarantee the spreading of the solution $u$ and the propagation with global mean speed $c$. Furthermore, the width of the transition between the limit states $1$ and $0$ is uniformly bounded in time in the sense of Definition~\ref{Def1} and the solution locally converges to planar fronts as $t\to+\infty$. The estimates of the location of the level sets as $t\to+\infty$ are established by constructing sub- and supersolutions whose level sets have roughly expanding spherical shapes of radii $ct-((N-1)/c)\ln t+O(1)$, see Lemma~\ref{lemma-super sub} below. The logarithmic gap $((N-1)/c)\ln t$ is due to the curvature of the level sets, and these estimates are similar to those obtained in~\cite{U1985} for the solutions of the Cauchy problem in $\R^N$ with compactly supported initial conditions and complete propagation. In our case, at time $t=0$ (as at any other time), the function $x\mapsto u(t,x)$ converges to $0$ as $x_1\to+\infty$, but it then invades the right part of the domain, a situation similar to the case of invading solutions with initial compact support in $\R^N$. The proof of the asymptotic planar property is based on compactness arguments and a Liouville-type theorem for entire solutions of the bistable equation in the whole space given in~\cite[Theorem~3.1]{BH2007}.

Theorem~\ref{thm3} shows that the solutions $u$ of Proposition~\ref{thm1} that propagate completely are transition fronts connecting $0$ and $1$, with global mean speed equal to $c$. It also turns out, this time immediately from Proposition~\ref{thm1}, that the solutions $u$ that are blocked are still transition fronts connecting $1$ and $0$, but they do not have any global mean speed.

\begin{theorem}
\label{thm4}
For any $R>0$ and $\alpha\in(0,\pi/2)$, let $u$ be the solution of~\eqref{1} and~\eqref{initial} given in Proposition~$\ref{thm1}$. If $u$ is blocked in the sense of~\eqref{blocking}, then it is a transition front connecting~$1$ and~$0$ without any global mean speed, and $(\Gamma_t)_{t\in\mathbb{R}}$, $(\Omega^{\pm}_t)_{t\in\mathbb{R}}$ can be defined by 
\begin{equation}
\label{Gamma_tbis}
\left\{\baa{lll}
\Gamma_t=\big\{x\in\Omega:x_1=ct\big\}  & \hbox{and }\ \Omega^{\pm}_t=\big\{x\in\Omega: \pm(x_1-ct)<0\big\} & \hbox{for } t\le 0,\vspace{3pt}\\
\ds\Gamma_t=\big\{x\in\Omega:x_1=0\big\} & \hbox{and }\ \Omega^{\pm}_t=\big\{x\in\Omega: \pm\,x_1<0\big\} & \hbox{for }t>0.\eaa\right.
\end{equation}
\end{theorem}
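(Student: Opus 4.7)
The plan is to verify that the families $(\Gamma_t)_{t\in\R}$ and $(\Omega^\pm_t)_{t\in\R}$ prescribed in~\eqref{Gamma_tbis} satisfy the axioms~\eqref{1.7}-\eqref{1.11} of Definition~\ref{Def1}, and then to exhibit two different limits for $d_\Omega(\Gamma_t,\Gamma_s)/|t-s|$ as $|t-s|\to+\infty$ to rule out a global mean speed.

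The geometric conditions~\eqref{1.7}-\eqref{1.10} are transparent. For every $t\in\mathbb{R}$, $\Gamma_t$ is a flat cross-section located inside the straight part of $\Omega$ (since $ct\le 0$ for $t\le 0$, and $\Gamma_t=\{x_1=0\}$ for $t>0$), hence a disk of radius $R$, a single graph, so~\eqref{1.10} holds with $n=1$. The sets $\Omega^\pm_t$ clearly partition $\Omega\setminus\Gamma_t$, each extends unboundedly in $\Omega$ ($\Omega^+_t$ toward $x_1=-\infty$ and $\Omega^-_t$ toward $x_1=+\infty$), and moving along the $x_1$-axis from any point of $\Gamma_t$ produces the large geodesic balls required in~\eqref{1.8}-\eqref{1.9}, uniformly in $t$.

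The core of the argument is condition~\eqref{1.11}, which rests on the monotonicity $u_t>0$ from Proposition~\ref{thm1} together with the two asymptotic regimes of $u$. Given $\varep>0$, I would pick $T_1<0$ such that $|u(t,x)-\phi(x_1-ct)|\le\varep/2$ for $t\le T_1$ and $x\in\overline{\Omega}$, using~\eqref{initial}; pick $T_2>0$ such that $|u(t,x)-u_\infty(x)|\le\varep/2$ for $t\ge T_2$ uniformly in $x\in\overline{\Omega}$, using the blocking alternative in Theorem~\ref{thm2}; and fix $M_0>0$ beyond which $\phi$ and $u_\infty$ are within $\varep/2$ of their limiting values (using~\eqref{phi} together with $u_\infty(x)\to 1$ as $x_1\to-\infty$ uniformly in $|x'|\le R$, as noted after Proposition~\ref{thm1}, and $u_\infty(x)\to 0$ as $x_1\to+\infty$ in the blocked case). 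The projection bound $d_\Omega(x,\Gamma_t)\ge|x_1-ct|$ (with $ct$ replaced by $0$ for $t>0$), combined with the axial-path upper bound $d_\Omega(x,\Gamma_t)\le|x'|+|x_1-ct|$ obtained by joining $x=(x_1,x')$ to $(x_1,0)$ and then to $(ct,0)\in\Gamma_t$, and with the cone constraint $|x'|\le x_1\tan\alpha$ in the conical part, translates $d_\Omega(x,\Gamma_t)\ge M_\varep$ into a lower bound on $\pm x_1$. Taking $M_\varep$ of the form $C(\alpha)(M_0+c\max(|T_1|,|T_2|))$, the planar front approximation yields~\eqref{1.11} for $t\le T_1$, the approximation by $u_\infty$ yields it for $t\ge T_2$, and the sandwich $u(T_1,\cdot)\le u(t,\cdot)\le u_\infty$ (from the monotonicity of $u$ in $t$) closes the estimate uniformly for $t\in[T_1,T_2]$.

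For the absence of a global mean speed I would pick two concrete regimes. Along $(s,t)=(s,0)$ with $s\to-\infty$, both cross-sections $\Gamma_s=\{x_1=cs\}$ and $\Gamma_0=\{x_1=0\}$ lie in the straight part, so $d_\Omega(\Gamma_s,\Gamma_0)=c|s|$ and the ratio tends to $c$; along $(s,t)=(0,t)$ with $t\to+\infty$, one has $\Gamma_s=\Gamma_t=\{x_1=0\}$, so the ratio is identically $0$. Since $c>0$, the two limits differ, so $d_\Omega(\Gamma_t,\Gamma_s)/|t-s|$ has no limit as $|t-s|\to+\infty$ and $u$ has no global mean speed (the invariance of the global mean speed with respect to the choice of interface families, recalled after Definition~\ref{Def1} and proved in~\cite{BH2012}, makes this conclusion independent of our specific representation). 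The main technical point, beyond bookkeeping, is the uniform-in-time control of~\eqref{1.11} across the transition around $t=0$, which is handled cleanly through the monotonicity of $u$ in $t$.
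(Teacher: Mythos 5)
Your proposal is correct and follows essentially the same route as the paper: the paper's (very short) proof likewise combines the past condition~\eqref{initial}, the uniform limits $u(t,x)\to1$ as $x_1\to-\infty$ and $u(t,x)\to0$ as $x_1\to+\infty$ (the latter via $u<u_\infty$ and $u_\infty(x)\to0$), and the monotonicity of $u$ in $t$ to verify Definition~\ref{Def1} with the families~\eqref{Gamma_tbis}, and then notes the ``past speed'' $c$ versus ``future speed'' $0$ to exclude a global mean speed. Your write-up merely makes explicit the geodesic-distance bookkeeping and the two limiting regimes that the paper leaves implicit.
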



\subsection{Complete propagation for large $R$}

From now on, we investigate the effect of the parameters $R$ and $\alpha$ of the funnel-shaped domains~$\Omega=\Omega_{R,\alpha}$ on the propagation phenomena of the front-like solution $u$ of \eqref{1} satisfying the past condition~\eqref{initial}. We first recall that, when $\alpha=0$, $u(t,x)\equiv\phi(x_1-ct)$ and the propagation is complete, whatever $R>0$ may be. Our next result provides some sufficient conditions on the size $R>0$ to ensure the complete propagation condition~\eqref{complete} when $\alpha>0$.

\begin{theorem}
\label{thm5}
There is $R_0>0$ such that, if $R\ge R_0$ and $\alpha>0$, then the unique solution $u$ of~\eqref{1} satisfying \eqref{initial} propagates completely in the sense of \eqref{complete}, and therefore all the conclusions of Theorem~$\ref{thm3}$ are valid.
\end{theorem}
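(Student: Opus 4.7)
My plan is to argue by contradiction, reducing the problem to an application of the Liouville-type Proposition~\ref{proliouville}. By the dichotomy of Theorem~\ref{thm2}, it suffices to rule out blocking for $R$ sufficiently large. Suppose, on the contrary, that there exists a sequence $R_n\to+\infty$ (with $\alpha\in(0,\pi/2)$ fixed) for which the corresponding solutions $u^{R_n}$ of~\eqref{1}-\eqref{initial} in $\Omega_{R_n,\alpha}$ are all blocked. Then the long-time limits $u_\infty^n:=\lim_{t\to+\infty}u^{R_n}(t,\cdot)$ given by Proposition~\ref{thm1} are nonconstant, axisymmetric stationary solutions of the Neumann problem in $\Omega_{R_n,\alpha}$, with $0<u_\infty^n\le1$, $u_\infty^n(x)\to1$ as $x_1\to-\infty$, and $u_\infty^n(x)\to0$ as $x_1\to+\infty$ (by Theorem~\ref{thm2} and the blocking assumption). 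Moreover, each $u_\infty^n$ is stable in the sense of~\eqref{stable}, by Lemma~\ref{lemstable}.

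By continuity along the $x_1$-axis, one can choose $s_n\in\R$ with $u_\infty^n(s_n,0,\ldots,0)=\theta$. Set $V_n(x):=u_\infty^n(x+s_ne_1)$ on the shifted domain $\tilde\Omega_n:=\Omega_{R_n,\alpha}-s_ne_1$; each $V_n$ is a stable axisymmetric solution of $\Delta V_n+f(V_n)=0$ with Neumann boundary conditions on $\tilde\Omega_n$, and satisfies $0\le V_n\le1$ and $V_n(0)=\theta$. The key geometric observation is that, since $h=R_n$ on $(-\infty,0]$ and $h'\ge0$ on $\R$, one has $h\ge R_n$ everywhere, so every cross-section of $\Omega_{R_n,\alpha}$ has radius at least $R_n$. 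Consequently, for any fixed $M>0$, the Euclidean ball $B(0,M)\subset\R^N$ is contained in $\tilde\Omega_n$ as soon as $R_n>M$, \emph{regardless of the value of $s_n$}. Since $0\le V_n\le1$ and $f\in C^{1,1}$, standard interior elliptic estimates give uniform $C^{2,\beta}_{loc}$ bounds on the $V_n$, and a diagonal extraction yields a subsequence converging in $C^2_{loc}(\R^N)$ to a function $V\in C^2(\R^N)$ that is axisymmetric with respect to the $x_1$-axis, solves $\Delta V+f(V)=0$ in $\R^N$, and satisfies $0\le V\le1$ and $V(0)=\theta$. The stability of $V_n$ in $\tilde\Omega_n$ passes to the limit: for any $\psi\in C_c^1(\R^N)$, the support of $\psi$ lies in $\tilde\Omega_n$ for $n$ large, so~\eqref{stable} for $V_n$ combined with the $C^2_{loc}$ convergence yields $\int_{\R^N}\bigl(|\nabla\psi|^2-f'(V)\psi^2\bigr)\,dx\ge0$, proving the stability of $V$ on $\R^N$.

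Applying Proposition~\ref{proliouville} to $V$ then forces $V\equiv0$ or $V\equiv1$ in $\R^N$, which is incompatible with $V(0)=\theta\in(0,1)$. This contradiction shows that there exists $R_0>0$ such that blocking cannot occur for $R\ge R_0$, so Theorem~\ref{thm2} delivers the complete propagation~\eqref{complete} and Theorem~\ref{thm3} applies. The main technical subtlety is the passage to the limit: one must guarantee that the shifted domains $\tilde\Omega_n$ exhaust $\R^N$ locally no matter whether the sequence $\{s_n\}$ remains bounded or diverges to $\pm\infty$. This uniformity is exactly what the lower bound $h\ge R_n$ provides, since after any axial shift each cross-section still has radius at least $R_n$ and therefore contains any fixed ball of $\R^{N-1}$ once $n$ is large enough.
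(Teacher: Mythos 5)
Your argument is correct in substance, but it takes a genuinely different route from the paper. The paper's proof is direct and constructive: $R_0$ is taken to be the radius from Lemma~\ref{lemsub}, for which the Dirichlet problem in the ball $B_{R_0}$ admits a radially decreasing solution $\psi$ with $\psi(0)>\theta$; since $R\ge R_0$, such a ball fits inside the straight part far to the left, where $u_\infty$ is close to $1$ and hence dominates $\psi$, and the sliding argument of Lemma~\ref{lemliouville1} then propagates the inequality $u_\infty>\psi(0)>\theta$ throughout the cone, so that Lemma~\ref{lemliouville0} forces $u_\infty\equiv1$. You instead argue by compactness and contradiction, shifting the blocked limits $u_\infty^n$ to a level-$\theta$ point on the axis, using the lower bound $h\ge R_n$ to see that the shifted domains exhaust $\R^N$ locally, passing the stability of Lemma~\ref{lemstable} to the limit, and invoking the Liouville result of Proposition~\ref{proliouville} for stable axisymmetric entire solutions; this is essentially the same mechanism the paper deploys in the first case of the proof of Theorem~\ref{thm2}, redirected at Theorem~\ref{thm5}. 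Both proofs are valid; what the paper's buys is an explicit threshold $R_0$ depending only on $f$ and manifestly uniform in $\alpha$ and in the shape parameter $L$, whereas yours is soft and produces no explicit constant.

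One point you should repair: the theorem asserts a single $R_0$ valid for \emph{all} $\alpha\in(0,\pi/2)$ (and all admissible profiles $h$), so its negation is a sequence $(R_n,\alpha_n,h_n)$ with $R_n\to+\infty$ and $\alpha_n$, $h_n$ arbitrary, not a sequence $R_n\to+\infty$ with $\alpha$ fixed. As written your contradiction only yields an $\alpha$-dependent threshold $R_0(\alpha)$. The fix is immediate, since the only geometric input you use is $h_n\ge R_n$ on all of $\R$, which holds for every admissible $(\alpha_n,h_n)$; just let $\alpha_n$ and $h_n$ vary along the sequence and the rest of the argument is unchanged.
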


This theorem shows that  the invasion always occurs no matter the size of the opening angle in the right part is, provided the left part of the domain is not too thin (see Figure 3).  The proof relies on the existence of a compactly supported subsolution, with maximum larger than~$\theta$, to the elliptic problem~\eqref{u-infty}, and on the sliding method used to compare $u_\infty$ with some shifts of this subsolution.  

\begin{figure}[H]
\centering
\includegraphics[scale=0.45]{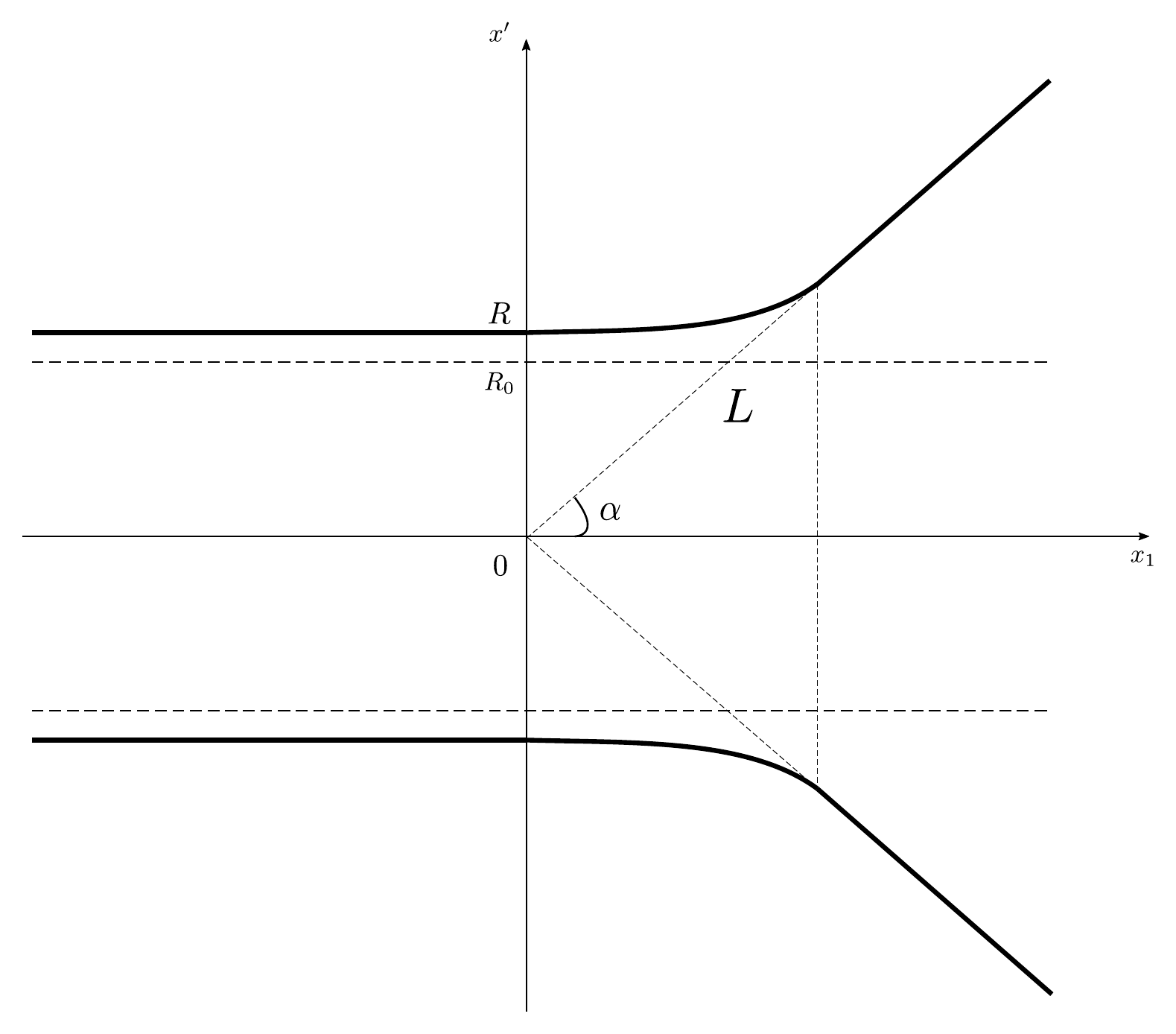}
\caption{\small Schematic figure of the domain $\Omega_{R,\alpha}$ for $\alpha\in(0,\pi/2)$ and $R> R_0$.}
\end{figure}


\subsection{Blocking for $R\ll 1$ and $\alpha$ not too small}

The next result is concerned with blocking phenomena. We prove that the  solution $u$ of~\eqref{1} in $\Omega_{R,\alpha}$ with past condition~\eqref{initial} is blocked if $R$ is sufficiently small and $\alpha$ is sufficiently close to $\pi/2$ (see Figure~4).

\begin{figure}[H]
\centering
\includegraphics[scale=1.35]{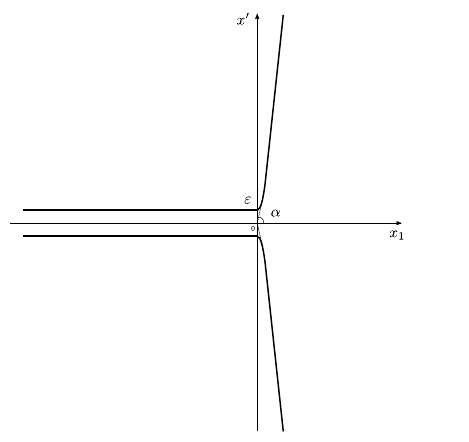}
\caption{\small Schematic figure of the domain $\Omega_{\varep,\alpha}$ for $R=\varep\ll 1$.}
\end{figure}

\begin{theorem}
\label{thm6}
Assume that $N\ge3$ and let $L_*>0$ and $\alpha_*\in(0,\pi/2)$ be given. Then there is~$R_*>0$ such that, if $0<R\le R_*$, $\alpha_*\le\alpha<\pi/2$ and $L\le L_*$ in~\eqref{defOmega}-\eqref{h}, then the solution~$u$ of~\eqref{1} in $\Omega$ with past condition~\eqref{initial} is blocked, in the sense of~\eqref{blocking}.
\end{theorem}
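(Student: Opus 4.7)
The plan is to rule out complete propagation for small $R$, so that the dichotomy in Theorem~\ref{thm2} yields blocking. I will construct a stationary super-solution $\bar v$ of the Neumann elliptic problem~\eqref{u-infty} that equals $1$ on a bounded region of $\overline\Omega$ and decays to $0$ as $x_1\to+\infty$, and then deduce $u_\infty\le\bar v$ by parabolic comparison.

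For the super-solution, my idea is to place a virtual apex $x_A:=(-R\cot\alpha,0,\ldots,0)$ on the negative $x_1$-axis, so that the cone from $x_A$ with opening angle $\alpha$ contains $\Omega^+$ (using $h(x_1)\le R+x_1\tan\alpha$ from~\eqref{h}), and set
\[
\bar v(x)=\begin{cases}1 & \text{if }x_1\le L\cos\alpha\text{ or }|x-x_A|\le r_0,\\ W(|x-x_A|) & \text{otherwise,}\end{cases}
\]
with $r_0:=\sqrt{(L\cos\alpha+R\cot\alpha)^2+L^2\sin^2\alpha}$ and $W\colon[r_0,+\infty)\to(0,1]$ a non-increasing $C^1$ profile with $W(r_0)=1$, $W'(r_0)=0$, $W(r)\to 0$ as $r\to+\infty$, satisfying $W''+\tfrac{N-1}{r}W'+f(W)\le 0$. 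Concretely I would take $W(r)=1-C(r-r_0)^2$ on a short interval $[r_0,r_0+r^*]$ (with $r^*$ and $C$ depending only on $f$, chosen so the concavity dominates $\max_{[\theta',1]}f$ for a fixed $\theta'\in(0,\theta)$), glued to the harmonic outer piece $W(r)=\theta'\bigl((r_0+r^*)/r\bigr)^{N-2}$ for $r\ge r_0+r^*$. \emph{The assumption $N\ge 3$ enters precisely here: the radial harmonic $r^{2-N}$ tends to zero at infinity only when $N\ge 3$.} The Neumann condition is trivial wherever $\bar v\equiv1$; on the conical part of $\partial\Omega$ (where $|x'|=x_1\tan\alpha$), a direct computation using the position of $x_A$ gives
\[
\nu\cdot\nabla\bar v \;=\; -R\cos\alpha\,\cdot\,\frac{W'(|x-x_A|)}{|x-x_A|} \;\ge\; 0
\]
since $W'\le 0$; the hypotheses $\alpha\ge\alpha_*$ and $L\le L_*$ keep $|x_A|$ and $r_0$ bounded.

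For the comparison, I would use the fact that the planar front $\phi(x_1-ct)$ is a parabolic super-solution on $\overline\Omega$ (since $\phi'<0$ and the outer normal $\nu$ to $\partial\Omega$ satisfies $\nu_1=-h'/\sqrt{1+h'^2}\le 0$, whence $\nu\cdot\nabla\phi=\nu_1\phi'\ge 0$), so that $u(t,x)\le\phi(x_1-ct)$ for all $(t,x)\in\R\times\overline\Omega$. For $T_0\to-\infty$, this gives $u(T_0,x)\le\phi(x_1-cT_0)\le\phi(-cT_0)\to 0$ uniformly on $\Omega^+$, refined via~\eqref{phi} to the exponential decay $u(T_0,x)\le C\,e^{-\mu^*(x_1-cT_0)}$ in $x_1$, which dominates the polynomial decay $\bar v\sim|x-x_A|^{2-N}$ for $x_1$ large. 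On bounded portions of $\Omega^+$, $\bar v$ is bounded below by a positive constant, so uniform smallness of $u(T_0,\cdot)$ closes the comparison; on $\Omega^-$, $\bar v\equiv 1\ge u(T_0,x)$ trivially. Hence $u(T_0,\cdot)\le\bar v$ on $\overline\Omega$ for $T_0$ sufficiently negative, and parabolic comparison yields $u(t,\cdot)\le\bar v$ for all $t\ge T_0$. Passing to $t\to+\infty$ gives $u_\infty\le\bar v$, and thus $u_\infty(x)\to 0$ as $x_1\to+\infty$, which by Theorem~\ref{thm2} means $u$ is blocked.

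The hard part will be the precise construction of $W$ so that it satisfies the super-solution inequality at the interface $\{r=r_0+r^*\}$ and has sufficient regularity there, since the slopes of the concave transition and the harmonic outer profile must be compatible in a direction that keeps the glued function a viscosity (or classical, after smoothing) super-solution. This is where the hypothesis $R\le R_*$ plays its quantitative role: $R$ small keeps $r_0$ small (bounded by a constant independent of $R$ once $L\le L_*$ and $\alpha\ge\alpha_*$ are fixed) and makes the slope matching and the geometric estimates for $W$ feasible; $L\le L_*$ bounds the transition zone and $\alpha\ge\alpha_*$ prevents $|x_A|=R\cot\alpha$ from running off as $\alpha\to 0$.
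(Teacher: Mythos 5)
Your overall strategy (build a stationary supersolution equal to $1$ on the left, decaying to $0$ on the right, compare, and invoke the dichotomy of Theorem~\ref{thm2}) is the same as the paper's, and both your Neumann computation on the conical boundary and the comparison step at $T_0\to-\infty$ are fine. The fatal gap is the existence of the radial profile $W$. Your barrier requires a nonincreasing $W:[r_0,+\infty)\to(0,1]$ with $W(r_0)=1$, $W(+\infty)=0$ and $W''+\tfrac{N-1}{r}W'+f(W)\le0$ on all of $(r_0,+\infty)$ (every radius $r>r_0$ is attained in $\Omega\cap\{|x-x_A|>r_0\}$, so the full ODE inequality is needed). Such a $W$ cannot exist once $r_0\ge R_0$, with $R_0$ as in Lemma~\ref{lemsub}: setting $v(x)=W(|x|)$ for $|x|\ge r_0$ and $v=1$ on $B_{r_0}$ yields a generalized supersolution of $\Delta v+f(v)=0$ in all of $\R^N$ lying above the compactly supported subsolution $\psi$ of Lemma~\ref{lemsub}; the solution of the Cauchy problem in $\R^N$ with initial datum $\psi$ is nondecreasing in time and converges to a stationary solution which, by the sliding argument of Lemma~\ref{lemliouville1} adapted to $\R^N$, exceeds $\theta$ everywhere and is therefore identically $1$ (this invasion property, which relies on $\int_0^1f>0$, is exactly what powers Theorem~\ref{thm5}); this contradicts $W(+\infty)=0$. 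Since your $r_0=\sqrt{(L\cos\alpha+R\cot\alpha)^2+L^2\sin^2\alpha}\ge L$, the construction is impossible whenever $L\ge R_0$, whereas the theorem must cover every $L\in(R,L_*]$ with $L_*$ arbitrary. Relatedly, your claim that ``$R$ small keeps $r_0$ small'' is incorrect --- $r_0$ is bounded below by $L$, independently of $R$ --- so the hypothesis $R\le R_*$, which is the actual crux of the statement, plays no genuine quantitative role in your argument.

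Even in the regime $r_0<R_0$ your specific gluing fails: for the kink at $\rho=r_0+r^*$ to be admissible for a supersolution one needs $|W'(\rho^-)|\le|W'(\rho^+)|$, i.e.\ $2Cr^*\le\theta'(N-2)/\rho$; combined with $2C\ge\max_{[\theta',1]}f$ and $C(r^*)^2=1-\theta'$ this forces $\theta'\ge 2/N$ even as $r_0\to0$, which is incompatible with $\theta'<\theta$ when $\theta$ is small. The paper avoids all of this by not using a radial barrier: it constructs the supersolution variationally, as a local minimizer of the energy $J$ in truncated domains near the profile $w_0$ that drops from $1$ to $0$ across the thin neck $\Omega^-_{R,1}\cup S^+_{R,\alpha}$ at the junction. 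The smallness of $R$ is used precisely to make the measure, hence the energy cost, of that neck small compared with the coercivity constant of Lemma~\ref{lem5.1}, and the hypothesis $N\ge3$ enters through the Sobolev embedding $H^1\hookrightarrow L^{2N/(N-2)}$ rather than through the decay of $r^{2-N}$. If you want a barrier-type proof you would need level sets adapted to the neck geometry, not spheres about a virtual apex.
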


From a biological point of view, Theorem \ref{thm6} says that as the species goes from a very narrow passage into a suddenly wide open space, the diffusion disperses the population to lower density where the reaction behaves adversely. That prevents the species from rebuilding a strong enough basis to invade the right part of the domain. This phenomenon is similar to the problem studied in~\cite{CG2005}, although the proof given here, based on the construction of suitable supersolutions, is completely different.

Let us now make some further remarks on the effect of the geometry of the domain on invasion or blocking phenomena. In population dynamics, where $u$ stands for the population density, one can think of the invasion of fishes from mountain streams into an endless ocean, and more generally speaking the invasion of plants or animals subject to an Allee effect and going from an isthmus into a large area. In medical sciences, the bistable reaction-diffusion equation is used to model the motion of depolarization waves in the brain, in which the domain can be thought of as a portion of grey matter of the brain with different thickness: here $u$ represents the degree of depolarization, and the Neumann boundary condition means that the grey matter is assumed to be isolated. Equations of the type~\eqref{1} can also be used to study ventricular fibrillations. Ventricular fibrillation is a state of electrical anarchy in part of the heart that leads to rapid chaotic contractions, which are fatal unless a normal rhythm can be restored by defibrillation. When excitation waves enter the circular area of cardiac tissue, they are trapped and their propagation triggers off ventricular fibrillations~\cite{AH1945}. Therefore, understanding how the geometrical properties of the cardiac fibres or fibre bundles affect or even block the propagation of excitation waves is of vital importance. For more detailed backgrounds and explanations from biological view point, we refer to \cite{CG2005,BBC2016,GL1991} and the references therein.


\subsection{The set of parameters $(R,\alpha)$ with complete propagation is open in $(0,+\infty)\times(0,\pi/2)$
}

In the final main result, we show that if the front-like solution $u$ emanating from the planar traveling front satisfies the  complete propagation property~\eqref{complete} in $\Omega_{R,\alpha}$ for some $R>0$ and $\alpha\in(0,\pi/2)$, then, with a slight perturbation of $R$ and $\alpha$, the solution $u$ will still propagate completely in the perturbed domain. For this result, we use an additional assumption on the continuous dependence of $\Omega_{R,\alpha}$ with respect to $(R,\alpha)$.

\begin{theorem}
\label{thm7}
Assume that the functions $h$ given in~\eqref{defOmega}-\eqref{h} depend continuously on the parameters $(R,\alpha)\in(0,+\infty)\times(0,\pi/2)$ in the $C^{2,\beta}_{loc}(\R)$ sense, with $0<\beta<1$. Then the set of parameters $(R,\alpha)$ such that the solution $u$ of~\eqref{1} in $\Omega_{R,\alpha}$ with past condition~\eqref{initial} propagates completely, in the sense of~\eqref{complete}, is open in $(0,+\infty)\times(0,\pi/2)$.
\end{theorem}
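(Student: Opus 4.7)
The plan is to argue by contradiction: fix $(R_0,\alpha_0)\in(0,+\infty)\times(0,\pi/2)$ such that $u_{R_0,\alpha_0}$ propagates completely, and suppose there is a sequence $(R_n,\alpha_n)\to(R_0,\alpha_0)$ along which each solution $u_n:=u_{R_n,\alpha_n}$ in $\Omega_n:=\Omega_{R_n,\alpha_n}$ is blocked. By Theorem~\ref{thm2} the stationary limits $u_{n,\infty}:=\lim_{t\to+\infty}u_n(t,\cdot)$ then satisfy $u_{n,\infty}(x)\to 0$ as $x_1\to+\infty$, and I aim to derive a contradiction for $n$ large.

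The first ingredient is the continuous dependence $u_n\to u_0:=u_{R_0,\alpha_0}$ on compact sets. Since $h_{R_n,\alpha_n}\to h_{R_0,\alpha_0}$ in $C^{2,\beta}_{loc}(\R)$ by hypothesis, one builds diffeomorphisms $\Phi_n:\overline{\Omega_0}\to\overline{\Omega_n}$ converging to the identity in $C^{2,\beta}_{loc}$, so that the pullbacks $\tilde u_n:=u_n\circ\Phi_n$ solve uniformly parabolic Neumann problems on the fixed domain $\Omega_0$ with coefficients converging to those of~\eqref{1}. Parabolic interior and boundary Schauder estimates yield compactness of $(\tilde u_n)$ in $C^{1,2}_{loc}(\R\times\overline{\Omega_0})$; to identify the limit via the uniqueness part of Proposition~\ref{thm1}, the key point is that the subsolution of the form $\phi(x_1-ct+q(t))-\kappa e^{\lambda t}$ (with $q$ bounded, $q(t)\to 0$ as $t\to-\infty$) used in~\cite{BBC2016,BHM2009,P2018} to force the past condition~\eqref{initial} can be chosen uniformly in $n$, since its construction rests only on the planar profile $\phi$ and properties of $f$, independently of the domain. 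Hence $\sup_{\overline{\Omega_n}}|u_n(t,\cdot)-\phi(x_1-ct)|\to 0$ as $t\to-\infty$ uniformly in $n$, and the only possible limit is $u_0$.

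Next I would exploit the geometric description of $u_0$ provided by Theorem~\ref{thm3}. Because $\alpha_0>0$, the conical part of $\Omega_0$ contains balls of arbitrary radius; combined with complete propagation, this yields, for any prescribed $\rho>0$ and $\eta>0$, a time $T>0$ and a point $x^\ast\in\Omega_0$ with $\overline{B(x^\ast,\rho)}\subset\Omega_0$ and $u_0(T,x)\ge 1-\eta$ on $\overline{B(x^\ast,\rho)}$. I fix $\rho$ large enough that there exists a compactly supported stationary subsolution $V\ge 0$ of $\Delta V+f(V)=0$ in $\R^N$, with $\operatorname{supp}V\subset\overline{B(0,\rho)}$ and $V(0)=\max V>\theta$; this is precisely the subsolution used in the proof of Theorem~\ref{thm5}. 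Shrinking $\eta$ so that $\max V\le 1-2\eta$, the continuous dependence from the previous paragraph gives, for $n$ large, $\overline{B(x^\ast,\rho)}\subset\Omega_n$ and $u_n(T,x)\ge 1-2\eta\ge V(x-x^\ast)$ on $\overline{\Omega_n}$ (with $V$ extended by $0$). Monotonicity of $u_n$ in $t$ then gives $u_{n,\infty}(x)\ge V(x-x^\ast)$ in $\overline{\Omega_n}$.

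The last step is the sliding argument of Theorem~\ref{thm5}. Let $A_n$ denote the connected component of $\{y\in\R^N:\overline{B(y,\rho)}\subset\Omega_n\}$ containing $x^\ast$, and let $B_n\subset A_n$ be the set of parameters $y$ such that $u_{n,\infty}\ge V(\cdot-y)$ on $\overline{\Omega_n}$. It is closed by continuity and contains $x^\ast$; it is open because the strong maximum principle, applied to the linear elliptic inequality satisfied by the nonnegative function $u_{n,\infty}-V(\cdot-y)$ (both terms satisfy the Neumann condition, since $V(\cdot-y)\equiv 0$ near $\partial\Omega_n$ for every $y\in A_n$), forces the inequality to be strict inside $B(y,\rho)$ and hence stable under small perturbations of $y$. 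Thus $B_n=A_n$. Since $\alpha_n>0$, $A_n$ extends unboundedly into the conical part of $\Omega_n$, so choosing $y_k\in A_n$ with $y_{k,1}\to+\infty$ and evaluating at $x=y_k$ gives $u_{n,\infty}(y_k)\ge V(0)>\theta$, contradicting $u_{n,\infty}(x)\to 0$ as $x_1\to+\infty$. I expect the most delicate step to be the uniformity of the past-condition subsolution in the continuous-dependence argument; once this is secured, the invasion-by-sliding mechanism is directly imported from the proof of Theorem~\ref{thm5}.
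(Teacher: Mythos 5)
Your proposal is correct, but it reaches the contradiction by a genuinely different route from the paper. The paper works entirely at the level of the stationary states: it applies Lemma~\ref{lemliouville1} in each perturbed domain to get $\min_{\overline{B_{R_0}(x_0)}}u_{\infty,n}<\psi(0)<1$ on a fixed ball, passes to the limit by elliptic estimates to obtain a stationary solution $U$ of~\eqref{u-infty} in $\overline{\Omega_{R,\alpha}}$ with $U\to1$ as $x_1\to-\infty$ but $U<1$ somewhere, and then invokes Corollary~\ref{cor2} to conclude $u\le U$, contradicting complete propagation in the unperturbed domain. In other words, the paper transports the \emph{blocking} from the perturbed domains back to the limit domain, whereas you transport the \emph{invasion} from the limit domain forward to the perturbed domains: you first prove parabolic continuous dependence $u_n\to u$ in $C^{1,2}_{loc}$, place the compactly supported subsolution $\psi$ under $u_n(T,\cdot)$ at a large finite time $T$, and then slide. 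Both mechanisms rest on the same two prepared tools (the subsolution of Lemma~\ref{lemsub} and the sliding/strong maximum principle argument of Lemma~\ref{lemliouville1}, which you re-derive in the easier interior-ball case and combine with $u_{\infty,n}\to0$ rather than with Lemma~\ref{lemliouville0}). What the paper's route buys is that it never needs to identify the limit of the time-dependent solutions, only of the stationary ones, so the whole argument is elliptic; what your route buys is a more transparent ``stability of invasion'' picture and the quantitative byproduct that the invasion of $u_n$ is already secured at a finite time $T$ independent of $n$. The burden you take on --- the uniform-in-$n$ past condition needed to identify the parabolic limit via the uniqueness in Proposition~\ref{thm1} --- is correctly discharged: the sub- and supersolutions $w^\pm$ in~\eqref{defw-}--\eqref{defw+} and the times $T,T'$ depend only on $f$ and $\phi$, not on $(R,\alpha)$, and $\sup_{\overline{\Omega}}(w^+(t,\cdot)-w^-(t,\cdot))\to0$ as $t\to-\infty$, exactly as you claim.
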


The continuity of the functions $h$ given in~\eqref{defOmega}-\eqref{h} implies the local continuity of the domains $\Omega_{R,\alpha}$ in the sense of the Hausdorff distance. This continuity holds only in a local sense, since actually the Hausdorff distance between $\Omega_{R,\alpha}$ and $\Omega_{R',\alpha'}$ is infinite as soon as $\alpha\neq\alpha'$. But the local continuity is sufficient to guarantee the validity of~\eqref{complete} under small perturbations of $(R,\alpha)$. The proof of Theorem~\ref{thm7} is done by way of contradiction and it uses, as that of Theorem~\ref{thm5}, the existence of a compactly supported subsolution, with maximum larger than $\theta$, to the elliptic problem~\eqref{u-infty}.

From Theorems~\ref{thm2} and~\ref{thm7}, the next corollary follows immediately.

\begin{corollary}
Under the assumptions of Theorem~$\ref{thm7}$, the set of parameters $(R,\alpha)\in(0,+\infty)\times(0,\pi/2)$ such that the solution $u$ of~\eqref{1} in $\Omega_{R,\alpha}$ with past condition~\eqref{initial} is blocked, in the sense of~\eqref{blocking}, is relatively closed in $(0,+\infty)\times(0,\pi/2)$.
\end{corollary}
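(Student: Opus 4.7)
The plan is to deduce the corollary directly from the dichotomy of Theorem~\ref{thm2} together with the openness statement of Theorem~\ref{thm7}: all the analytic content has already been packaged into those two results, so the proof is essentially a set-theoretic observation. Concretely, I would denote by $\mathcal{P}\subset(0,+\infty)\times(0,\pi/2)$ the set of parameters $(R,\alpha)$ for which the solution $u$ of~\eqref{1}-\eqref{initial} propagates completely in the sense of~\eqref{complete}, and by $\mathcal{B}$ the corresponding blocking set defined through~\eqref{blocking}.

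The first step is to apply Theorem~\ref{thm2}, which asserts that for every $(R,\alpha)\in(0,+\infty)\times(0,\pi/2)$ the unique front-like solution $u$ from Proposition~\ref{thm1} either satisfies~\eqref{complete} or satisfies~\eqref{blocking}. These two behaviors are mutually exclusive, since complete propagation amounts to $u_\infty\equiv 1$ in $\overline{\Omega}$, while blocking forces $u_\infty(x)\to 0$ as $x_1\to+\infty$, and hence $u_\infty\not\equiv 1$. Consequently $\mathcal{P}$ and $\mathcal{B}$ form a partition of $(0,+\infty)\times(0,\pi/2)$, so that
\[
\mathcal{B}=\big((0,+\infty)\times(0,\pi/2)\big)\setminus\mathcal{P}.
\]

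The second step is to invoke Theorem~\ref{thm7}, which asserts, under the continuous dependence assumption on $h$, that $\mathcal{P}$ is open in $(0,+\infty)\times(0,\pi/2)$. Combining this with the identity above, $\mathcal{B}$ is the complement in $(0,+\infty)\times(0,\pi/2)$ of a relatively open set, hence relatively closed, which is exactly the claim. There is no genuine obstacle in this step: the only point worth double-checking is the mutual exclusivity of~\eqref{complete} and~\eqref{blocking}, which follows at once from the strong maximum principle applied to $u_\infty$ as noted right after Proposition~\ref{thm1}, together with the fact that under~\eqref{blocking} one has $u_\infty\not\equiv 1$.
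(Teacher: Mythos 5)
Your proposal is correct and is precisely the argument the paper intends: the corollary is stated as an immediate consequence of Theorems~\ref{thm2} and~\ref{thm7}, namely that the blocking set is the complement of the complete-propagation set (by the dichotomy) and hence relatively closed since the latter is relatively open. Your additional check of the mutual exclusivity of~\eqref{complete} and~\eqref{blocking} is a sound and welcome detail, but the route is the same as the paper's.
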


We finally conjecture that, under the assumptions of Theorem~\ref{thm7}, the set of parameters~$(R,\alpha)$ for which the solution $u$ of~\eqref{1} in $\Omega_{R,\alpha}$ with past condition~\eqref{initial} propagates completely is actually convex in both variables $R$ and $\alpha$, and that this property is stable by making $\alpha$ decrease or $R$ increase. This conjecture can be formulated as follows.

\begin{conjecture}
Assume that the functions $h$ given in~\eqref{defOmega}-\eqref{h} depend continuously on the parameters $(R,\alpha)\in(0,+\infty)\times(0,\pi/2)$ in the $C^{2,\beta}_{loc}(\R)$ sense, with $0<\beta<1$. We say that complete propagation $($resp. blocking$)$ holds in $\Omega_{R,\alpha}$ if the solution $u$ of~\eqref{1} in $\Omega_{R,\alpha}$ with past condition~\eqref{initial} satisfies~\eqref{complete} $($resp.~\eqref{blocking}$)$. Then,
\begin{itemize}
\item for every $R>0$, there is $\alpha_R\in(0,\pi/2]$ such that complete propagation holds in $\Omega_{R,\alpha}$ for all $\alpha\in(0,\alpha_R)$, and blocking holds for all $\alpha\in[\alpha_R,\pi/2)$ if $\alpha_R<\pi/2$;
\item for every $\alpha\in[0,\pi/2)$, there is $\rho_\alpha\in[0,+\infty)$ such that complete propagation holds in $\Omega_{R,\alpha}$ for all $R>\rho_\alpha$, and blocking holds for all $R\in(0,\rho_\alpha]$ if $\rho_\alpha>0$;
\end{itemize}
\end{conjecture}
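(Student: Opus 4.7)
The conjecture amounts to two monotonicity properties of the complete-propagation parameter set $\mathcal{P}\subset(0,+\infty)\times[0,\pi/2)$: that $\mathcal{P}$ is nondecreasing in $R$ and nonincreasing in $\alpha$. Granting these monotonicities, the two bulleted claims follow by defining $\alpha_R:=\sup\{\alpha\in[0,\pi/2):(R,\alpha)\in\mathcal{P}\}$ and $\rho_\alpha:=\inf\{R>0:(R,\alpha)\in\mathcal{P}\}$, and combining Theorem~\ref{thm5} (which ensures $\rho_\alpha<+\infty$ for every $\alpha$), Theorem~\ref{thm6} (which ensures $\alpha_R<\pi/2$ for sufficiently small $R$), Theorem~\ref{thm7} (so that $\mathcal{P}$ is open, yielding the strict inequalities $\alpha<\alpha_R$ and $R>\rho_\alpha$ in the propagating range), and Theorem~\ref{thm2} (the dichotomy, identifying the complement of $\mathcal{P}$ with the blocking set).

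The core step is therefore the monotonicity lemma: \emph{if complete propagation holds in $\Omega_{R_0,\alpha_0}$, then it also holds in every $\Omega_{R,\alpha}$ with $R\ge R_0$ and $\alpha\in[0,\alpha_0]$.} I would connect $(R_0,\alpha_0)$ to $(R,\alpha)$ by a continuous monotone path $s\in[0,1]\mapsto(R(s),\alpha(s))$ with $R(\cdot)$ nondecreasing and $\alpha(\cdot)$ nonincreasing, and show that $\mathcal{P}_{\mathrm{path}}:=\{s\in[0,1]:(R(s),\alpha(s))\in\mathcal{P}\}$ is both open and closed in $[0,1]$. Openness follows from Theorem~\ref{thm7}. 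For closedness, given $s_n\to s_*\in[0,1]$ with $s_n\in\mathcal{P}_{\mathrm{path}}$, parabolic regularity, the common past condition~\eqref{initial} satisfied by all entire solutions $u_n$ on $\Omega_{R(s_n),\alpha(s_n)}$, and the local Hausdorff convergence of the domains would yield $u_n\to u_*$ in $C^{1,2}_{(t,x);loc}$ along a subsequence, where $u_*$ is the unique entire solution on $\Omega_{R(s_*),\alpha(s_*)}$ provided by Proposition~\ref{thm1}.

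The crux is to transfer the complete propagation property to $u_*$. The natural strategy uses the compactly supported subsolution technique already employed in the proofs of Theorems~\ref{thm5} and~\ref{thm7}: each propagating $u_n$ eventually dominates a compactly supported subsolution $\psi_n$ of $\Delta\psi_n+f(\psi_n)\ge 0$ with $\max\psi_n>\theta$. The monotonicity direction is essential because the \emph{left cylindrical portions} of the domains are nested: the cylinder $\{x_1\le 0,\ |x'|\le R_0\}$ sits inside every $\Omega_{R(s),\alpha(s)}$ along the path. Exploiting the fact that $u_{n,\infty}(x)\to 1$ as $x_1\to-\infty$ uniformly on cross-sections (mentioned right after Proposition~\ref{thm1}), together with the uniformity of this limit in $n$ coming from the fact that all $u_n$ share the same past condition~\eqref{initial}, one can locate the supports of the $\psi_n$ in a fixed bounded subset of this common cylindrical region, extract a limiting subsolution $\psi_*\le u_{*,\infty}$ in $\Omega_{R(s_*),\alpha(s_*)}$ with $\max\psi_*>\theta$, and conclude via the Liouville-type results of Section~\ref{Sec-3.2}, together with the stability of $u_{*,\infty}$ analogous to Lemma~\ref{lemstable}, that $u_{*,\infty}\equiv 1$. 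The main obstacle is precisely this uniform-in-$n$ placement of the $\psi_n$: conical parts of the domains do not nest under decreasing $\alpha$, so the subsolution must be produced inside the cylindrical region, which is the reason the monotonicity is asymmetric and why the same argument cannot establish the opposite directions in either $R$ or $\alpha$.
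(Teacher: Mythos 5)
This statement is left \emph{open} in the paper (it is stated as a conjecture, with no proof), so there is no argument of the authors to compare yours against; what you have written is an attack on an open problem, and it does not close it. Your reduction of the two bullets to a monotonicity lemma is a sensible framing, though even granting that lemma the first bullet is not fully covered: the claim that $\alpha_R>0$ for \emph{every} $R>0$ requires complete propagation for some $\alpha>0$ at arbitrarily small $R$, and Theorem~\ref{thm7} only gives openness inside $(0,+\infty)\times(0,\pi/2)$, not at the boundary point $(R,0)$, while Theorem~\ref{thm5} only covers $R\ge R_0$. The decisive gap, however, is in the closedness half of your open--closed argument.

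Complete propagation of $u_n$ means $u_{n,\infty}\equiv1$, i.e.\ a statement about the limit $t\to+\infty$ with no uniformity in $n$; to transmit it to $u_*$ you need a quantitative certificate valid at a fixed time on a fixed compact set, and the one you propose does not exist. The subsolution of Lemma~\ref{lemsub} lives on a ball of the \emph{fixed} radius $R_0$ determined by $f$ alone; when $\min_s R(s)<R_0$ no such ball fits inside the common cylindrical core $\{x_1\le0,\ |x'|<\min_s R(s)\}$, and the only uniform-in-$n$ information you invoke there --- namely $u_{n,\infty}(x)\to1$ as $x_1\to-\infty$ uniformly in $n$, which is~\eqref{cv1unif} --- is satisfied by \emph{blocked} solutions as well, so it cannot certify propagation. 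More tellingly, your closedness argument as written never uses the monotonicity of the path: a common cylindrical core of positive radius exists along \emph{any} continuous path staying in a compact subset of the parameter space. If the argument were valid, $\mathcal{P}_{\mathrm{path}}$ would be open and closed along every such path, hence $\mathcal{P}$ would equal all of $(0,+\infty)\times(0,\pi/2)$ by connectedness, contradicting Theorem~\ref{thm6}. The step ``extract a limiting subsolution $\psi_*\le u_{*,\infty}$ with $\max\psi_*>\theta$ supported in a fixed compact set'' is therefore exactly where a genuinely new idea is required; the double limit $\lim_{n}\lim_{t\to+\infty}u_n$ versus $\lim_{t\to+\infty}\lim_n u_n$ need not commute, and this is precisely why the authors can prove openness of $\mathcal{P}$ (Theorem~\ref{thm7}) but only \emph{conjecture} the monotone structure.
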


From Theorem~\ref{thm5} one knows that $\alpha_R$ exists and $\alpha_R=\pi/2$ when $R\ge R_0$ (with the notations of Theorem~\ref{thm5}). Furthemore, $\rho_0$ exists and $\rho_0=0$. On the other hand, Theorem~\ref{thm6} implies that, in dimension $N\ge3$, for any given $\alpha_*\in(0,\pi/2)$ and $L_*>0$, the angle $\alpha_R$, if any, satisfies $\alpha_R\le\alpha_*$ when $R\in(0,R_*]$ (with the notations of Theorem~\ref{thm6}), and that $\rho_\alpha$, if any, satisfies $\rho_\alpha\ge R_*$ when $\alpha\in[\alpha_*,\pi/2)$.

\vskip 0.3cm
\noindent\textbf{Outline of the paper.} This article is organized as follows. The proof of Proposition~\ref{thm1} on the existence and uniqueness of the entire solution $u$ emanating from the planar front in the left part of a given domain $\Omega$ satisfying~\eqref{defOmega}-\eqref{h} is sketched in Section~\ref{Sec-2}. The proof of Theorem~\ref{thm2} on the dichotomy between complete propagation and blocking is shown in Section~\ref{Sec-3}, as are various Liouville type results for the solutions of~\eqref{u-infty} in funnel-shaped domains and in the whole space. Section~\ref{Sec-4} is devoted to the proof of Theorem~\ref{thm3} on the spreading properties in case of complete propagation. The immediate proof of Theorem~\ref{thm4} is also done in Section~\ref{Sec-4}. In Section~\ref{Sec-5}, we prove Theorem~\ref{thm5} on the existence of a threshold~$R_0>0$ such that the solution~$u$ propagates completely if $R\ge R_0$, and Theorem~\ref{thm6} on blocking when $R$ is small enough and~$\alpha$ is not too small, by constructing a suitable stable non-constant stationary solution of~\eqref{1}. Lastly, Section~\ref{Sec-6} is devoted to the proof of Theorem~\ref{thm7} on the openness of the set of parameters for which complete propagation holds. 


\section{Existence and uniqueness for problem~\eqref{1} with past condition~\eqref{initial}}\label{Sec-2}

This section is devoted to the sketch of the proof of Proposition~\ref{thm1} on the well-posedness of problem~\eqref{1} in $\overline{\Omega}$ with the past condition~\eqref{initial}, for any given $R>0$ and $\alpha\in[0,\pi/2)$. From the construction of the solution of~\eqref{1} and~\eqref{initial}, we also deduce another comparison result which will be used later in the proof of Theorem~\ref{thm7}. The proof of Proposition~\ref{thm1} is inspired from \cite{BHM2009,BBC2016,E2018,P2018}, so we just sketch it here. However, some important elements of the construction of the solution $u$ to~\eqref{1} satisfying~\eqref{initial} and several auxiliary estimates are pointed out since they will be used in the proofs of other main results in the following sections.

The main steps of the proof of Proposition~\ref{thm1} are the following:
\begin{itemize}
\item for $\mu^*>0$ defined as in~\eqref{phi}, there exist $M>0$ and
$$T'\le T:=\frac{1}{\mu^*c}\,\ln\frac{c}{c+M}<0$$
such that the function $w^-$ defined in $(-\infty,T]\times\overline{\Omega}$ by:
\be\label{defw-}
w^-(t,x)=\begin{cases}
\phi(x_1-ct+\xi(t))-\phi(-x_1-ct+\xi(t)), & t\le T,\ x\in\overline{\Omega}\hbox{ with }x_1<0,\vspace{3pt}\\
0, & t\le T,\ x\in\overline{\Omega}\hbox{ with }x_1\ge 0,
\end{cases}
\ee
with $\xi(t)=(1/\mu^*)\ln(c/(c-M\,e^{\mu^*ct}))$, is a generalized subsolution of~\eqref{1} in $(-\infty,T']\times\overline{\Omega}$, and it satisfies~\eqref{initial} (notice that $\xi(-\infty)=0$); furthermore, the real numbers $M$, $T$ and~$T'$ can be chosen independently of $R>0$ and $\alpha\in[0,\pi/2)$ (these coefficients depend on~$f$ and~$\phi$ only, and thus actually on $f$ only);
\item thanks to~\eqref{defOmega}-\eqref{h}, the function $w^+$ defined in $\R\times\overline{\Omega}$ by
\be\label{defw+}
w^+(t,x)=\phi(x_1-ct)
\ee
is a supersolution of~\eqref{1} in $\R\times\overline{\Omega}$, and it satisfies~\eqref{initial} and $w^-\le w^+$ in $(-\infty,T]\times\overline{\Omega}$, since $\phi$ is positive decreasing and $\xi>0$ in $(-\infty,T]$;
\item for each $n\in\N$ with $n>-T'$, let $u_n$ be the solution of the Cauchy problem associated to~\eqref{1} in $(-n,+\infty)\times\overline{\Omega}$, with initial (at time $-n$) condition defined by
\be\label{defunn}
u_n(-n,x)=\sup_{s\le-n}w^-(s,x)\,\in[0,1],\ \hbox{ for }x\in\overline{\Omega};
\ee
each function $u_n(-n,\cdot)$ only depends on $x_1$ and, from the strong parabolic maximum principle and the well-posedness of this Cauchy problem and the axisymmetry of $\Omega$ with respect to the $x_1$-axis, one has $0<u_n<1$ in $(-n,+\infty)\times\overline{\Omega}$ and, for each $t\ge-n$, $u_n(t,\cdot)$ is axisymmetric with respect to the $x_1$-axis, that is, it depends only on $x_1$ and $|x'|$, with $x'=(x_2,\cdots,x_N)$; furthermore, the maximum principle again and the fact that $w^-$ is a subsolution in $(-\infty,T']\times\overline{\Omega}$, imply that $u_n(t,\cdot)\ge u_n(-n,\cdot)$ in $\overline{\Omega}$ for all $t\in[-n,T']$, hence $u_n$ is non-decreasing with respect to the variable $t$ in $[-n,+\infty)\times\overline{\Omega}$;
\item the maximum principle also implies that $u_{n+1}(-n,\cdot)\ge u_n(-n,\cdot)$ in $\overline{\Omega}$ for each $n>-T'$, hence $u_{n+1}\ge u_n$ in $[-n,+\infty)\times\overline{\Omega}$ for each $n>-T'$; from standard parabolic estimates, the functions $u_n$ converge in $C^{1,2}_{(t,x);loc}(\R\times\overline{\Omega})$ to a classical solution $u$ of~\eqref{1} such that $0\le u\le 1$ and $u_t\ge0$ in $\R\times\overline{\Omega}$; furthermore, for each $t\in\R$, the function $u(t,\cdot)$ is axisymmetric with respect to the $x_1$-axis;
\item one has $1\ge u_n(-n,\cdot)\ge w^-(-n,\cdot)$ in $\overline{\Omega}$ for each $n>-T'$, hence $1\ge u_n(t,\cdot)\ge w^-(t,\cdot)$ in $\overline{\Omega}$ for all $t\in[-n,T']$ and
\be\label{uw}
1\ge u(t,\cdot)\ge w^-(t,\cdot)\ \hbox{ in $\overline{\Omega}$ for all $t\le T'$};
\ee
\item since $w^-\le w^+$ in $(-\infty,T]\times\overline{\Omega}$ and $w^+$ is increasing with respect to the variable $t$, one has $u_n(-n,\cdot)\le w^+(-n,\cdot)$ in $\overline{\Omega}$ for each $n>-T'$, hence $u_n(t,\cdot)\le w^+(t,\cdot)$ in $\overline{\Omega}$ for all $t\ge-n$, and
\be\label{inequw+}
u(t,x)\le w^+(t,x)=\phi(x_1-ct)\ \hbox{ for all $(t,x)\in\R\times\overline{\Omega}$};
\ee
\item from the inequalities $w^-\le u\le w^+$ in $(-\infty,T']\times\overline{\Omega}$, the past condition~\eqref{initial} follows immediately; one also gets that
$$0<u<1\ \hbox{ and }\ u_t>0\ \hbox{ in }\R\times\overline{\Omega}$$
from the strong parabolic maximum principle;
\item from standard parabolic estimates and the monotonicity in $t$, one has $u(t,\cdot)\to u_\infty$ as $t\to+\infty$ in $C^2_{loc}(\overline{\Omega})$, and $0<u_\infty\le1$ solves~\eqref{u-infty}; furthermore, $1\ge u_\infty(x)>u(t,x)\ge w^-(t,x)$ for all $(t,x)\in(-\infty,T']\times\overline{\Omega}$; in particular,
$$1\ge u_\infty(x)\ge w^-(T',x)=\phi(x_1-cT'+\xi(T'))-\phi(-x_1-cT'+\xi(T'))$$
for all $x\in\overline{\Omega}$ with $x_1<0$; since $\xi$ and $T'$ do not depend on $R>0$ and $\alpha\in[0,\pi/2)$, one gets that
\be\label{cv1unif}
u_\infty(x)\to1\hbox{ as $x_1\to-\infty$ uniformly with respect to $R>0$ and $\alpha\in[0,\pi/2)$};
\ee
\item for each $\eta\in(0,1/2)$, the past condition~\eqref{initial} and the monotonicity of $u$ in $t$, together with the strong parabolic maximum principle, yield $\liminf_{t\to-\infty,\,u(t,x)\in[\eta,1-\eta]}u_t(t,x)>0$;
\item for any solution $v$ of~\eqref{1} satisfying~\eqref{initial}, there are $\beta>0$ and $\sigma>0$ such that, for every $\epsilon>0$ small enough, there is $T_\epsilon<0$ such that $v<u+\epsilon$ in $(-\infty,T_\epsilon]\times\overline{\Omega}$ and the function
$$(t,x)\mapsto\min\big(u(t+\sigma\epsilon(1-e^{-\beta(t-t_0)}),x)+\epsilon e^{-\beta(t-t_0)},1\big)$$
is a supersolution of~\eqref{1} in $[t_0,T_\epsilon]\times\overline{\Omega}$ for all $t_0<T_\epsilon$; as this supersolution is larger than $v$ at time $t_0$, with any $t_0<T_\epsilon$, so is it in $[t_0,T_\epsilon]\times\overline{\Omega}$, hence $u(t+\sigma\epsilon,x)\ge v(t,x)$ in $(-\infty,T_\epsilon]\times\overline{\Omega}$ at the limit $t_0\to-\infty$, and finally $v\le u(\cdot+\sigma\epsilon,\cdot)$ in $\R\times\overline{\Omega}$ from the comparison principle; since this holds for all $\epsilon>0$ small enough, one gets $v\le u$ in $\R\times\overline{\Omega}$; similarly, the inequality $v\ge u$ holds, leading to the uniqueness for problem~\eqref{1} with the past condition~\eqref{initial}.
\end{itemize}
This completes the proof of Proposition~\ref{thm1}.\hfill$\Box$\break

From the proof of Proposition~\ref{thm1}, an important corollary follows, that will be used later in the proof of Theorem~\ref{thm7}.

\begin{corollary}\label{cor2}
For any $R>0$ and $\alpha\in[0,\pi/2)$, let $u$ be the solution of~\eqref{1} and~\eqref{initial} given in Proposition~$\ref{thm1}$. If there is a $C^2(\overline{\Omega})$ solution $U$ of the elliptic problem~\eqref{u-infty} such that $0<U\le1$ in $\overline{\Omega}$ and $U(x)\to1$ as $x_1\to-\infty$, then $u(t,x)\le U(x)$ for all $(t,x)\in\R\times\overline{\Omega}$.
\end{corollary}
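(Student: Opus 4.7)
The plan is to exploit the monotone approximating sequence $(u_n)_{n>-T'}$ constructed in the proof of Proposition~\ref{thm1}, for which $u_n\nearrow u$ locally uniformly in $\R\times\overline{\Omega}$. Since $U$ is a bounded stationary solution of~\eqref{1} with the same Neumann boundary conditions, the parabolic maximum principle on $[-n,+\infty)\times\overline{\Omega}$ reduces the corollary to establishing the initial inequality $u_n(-n,\cdot)\le U(\cdot)$ on $\overline{\Omega}$ for $n$ sufficiently large. Passing to the limit $n\to+\infty$ then yields $u\le U$.

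To prove this initial comparison, I first observe that for $x_1\ge0$ the inequality is trivial since $u_n(-n,x)=0$ by~\eqref{defw-}. For $x_1\le 0$, one checks that $s\mapsto\phi(x_1-cs+\xi(s))$ is increasing on $(-\infty,-n]$ for $n$ large (using $\phi'<0$ and $\xi'(s)<c$ for $s$ very negative), so that
\begin{equation*}
u_n(-n,x)\le\sup_{s\le-n}\phi(x_1-cs+\xi(s))=\phi(x_1+cn+\xi(-n)),
\end{equation*}
and it suffices to show $\phi(x_1+cn+\xi(-n))\le U(x)$ for every $x_1\le 0$ and $n$ large. The verification splits into three regions. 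On $\{-cn/2\le x_1\le 0\}\cap\overline{\Omega}$ one has $\phi(x_1+cn+\xi(-n))\le\phi(cn/2)\to0$, whereas $U$ admits a positive infimum on $\{x_1\le 0\}\cap\overline{\Omega}$ (an infimum argument combining $U\to 1$ as $x_1\to-\infty$, continuity, and $U>0$), so the inequality holds for $n$ large. On $\{-cn\le x_1\le-cn/2\}$ one has $\phi(x_1+cn+\xi(-n))\le\phi(0)=\theta$, while $U\ge 1-\varepsilon\ge\theta$ there for $n$ large, again giving the inequality.

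The delicate region is $\{x_1\le-cn\}$, where both $1-\phi(x_1+cn+\xi(-n))$ and $1-U(x)$ are small and a precise comparison of exponential decay rates is required. Here I combine the lower bound $1-\phi(z)\ge c_2\,e^{\mu_*z}$ for $z\le 0$ from~\eqref{phi} with an exponential upper bound $1-U(x)\le C_\mu\,e^{\mu x_1}$ for $x_1\le-M_\mu$, valid for any $\mu<\sqrt{|f'(1)|}$; such a bound follows from a standard barrier argument in the straight half-cylinder $\{x_1\le 0,\,|x'|\le R\}$, comparing $V=1-U$ with a multiple of the slowest decaying solution of the linearized equation $\Delta W=|f'(1)|W$ with Neumann conditions. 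The defining identity $\mu_*^2+c\mu_*+f'(1)=0$ from~\eqref{phi} gives $\mu_*^2=|f'(1)|-c\mu_*<|f'(1)|$, so the interval $(\mu_*,\sqrt{|f'(1)|})$ is nonempty; picking $\mu$ in it, the required inequality reduces to $(\mu-\mu_*)x_1\le\mu_*(cn+\xi(-n))+\ln(c_2/C_\mu)$, which holds on $\{x_1\le-cn\}$ for $n$ large since $(\mu-\mu_*)x_1\le 0$ there while the right-hand side tends to $+\infty$ with $n$. The main obstacle is precisely this decay-rate comparison in the region $\{x_1\le-cn\}$; it is the only place where the elliptic equation~\eqref{u-infty} satisfied by $U$ — rather than merely the limit $U\to 1$ at $-\infty$ — enters the argument, and the strict inequality $\mu_*<\sqrt{|f'(1)|}$ is exactly what makes the approximation sequence from Proposition~\ref{thm1} propagate through to the corollary.
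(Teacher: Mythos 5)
Your proposal is correct, and it reaches the initial comparison $u_n(-n,\cdot)\le U$ by a genuinely different route than the paper. The paper also reduces the corollary to showing $w^-(t,\cdot)\le U$ for $t$ very negative (hence $u_n(-n,\cdot)=\sup_{s\le -n}w^-(s,\cdot)\le U$), but it proves this by a sweeping argument: it sets $\epsilon^*=\min\{\epsilon\ge0: w^-\le U+\epsilon\}$, notes that $U+\epsilon^*$ is a supersolution in $\{x_1\le-A\}$ because $U\ge1-\delta$ there and $f'<0$ on $[1-\delta,+\infty)$, and rules out $\epsilon^*>0$ by applying the strong maximum principle (or Hopf lemma) at a touching point and contradicting $U(x)+\epsilon^*\to1+\epsilon^*>1$ as $x_1\to-\infty$. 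That argument needs only the qualitative limit $U\to1$ and the sign of $f'$ near $1$; no decay rate for $1-U$ is required. Your version instead makes the comparison pointwise and quantitative, splitting $\{x_1\le0\}$ into three zones and, in the far-left zone, matching the exponential decay rate $\mu_*$ of $1-\phi$ against a barrier estimate $1-U\le C_\mu e^{\mu x_1}$ for some $\mu\in(\mu_*,\sqrt{|f'(1)|})$ — an interval that is nonempty precisely because $\mu_*^2=|f'(1)|-c\mu_*<|f'(1)|$ with $c>0$. This costs you an extra (standard but not free) barrier lemma in the half-cylinder, together with the small bookkeeping point that for $x_1$ slightly below $-cn$ the argument $x_1+cn+\xi(-n)$ may still be nonnegative, where one should fall back on the zone-2 estimate $\phi\le\theta\le U$ rather than on~\eqref{phi}; in exchange it makes explicit where the equation for $U$ (as opposed to the mere limit at $-\infty$) is used, and it yields a quantitative gap between $u_n(-n,\cdot)$ and $U$. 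Both proofs are valid; the paper's is shorter and more robust to the precise decay of $U$, while yours is more explicit.
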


\begin{proof} We recall that $f(1)=0$, $f'(1)<0$, and $f$ is extended by $f(s)=f'(1)(s-1)$ for $s>1$. Let $\delta>0$ be such that $f'<0$ in $[1-\delta,+\infty)$, and let $A>0$ be such that
\be\label{defA}
1-\delta\le U(x)\le1\ \hbox{ for all $x\in\overline{\Omega}$ with $x_1\le-A$}.
\ee
Since $U$ is positive and continuous in $\overline{\Omega}$, it follows from the definitions of $\Omega$ and $w^-$ in~\eqref{defOmega}-\eqref{h} and~\eqref{defw-} that there exists $T_1\in(-\infty,T']\subset(-\infty,T]\subset(-\infty,0)$ such that
\be\label{defT1}
w^-(t,x)\le U(x)\ \hbox{ for all $t\le T_1$ and $x\in\overline{\Omega}$ with $x_1\ge-A$}.
\ee

We now claim that $w^-(t,x)\le U(x)$ for all $t\le T_1$ and $x\in\overline{\Omega}$, an inequality that will easily lead to the desired conclusion. To show this inequality, define
$$\epsilon^*=\min\big\{\epsilon\ge0:w^-(t,x)\le U(x)+\epsilon\hbox{ for all $t\le T_1$ and $x\in\overline{\Omega}$}\big\}.$$
Since $w^-$ and $U$ are globally bounded and continuous, $\epsilon^*$ is a well-defined nonnegative real number, and one has $w^-(t,x)\le U(x)+\epsilon^*$ for all $t\le T_1$ and $x\in\overline{\Omega}$. One shall show that $\epsilon^*=0$. Assume by way of contradiction that $\epsilon^*>0$. Notice that $w^-(t,\cdot)\to0$ as $t\to-\infty$ locally uniformly in $\overline{\Omega}$, and remember that $w^-\le1$ in $(-\infty,T_1]\times\overline{\Omega}$ and $U>0$ in $\overline{\Omega}$ with $\lim_{x_1\to-\infty}U(x)=1$. It  then follows from~\eqref{defT1} and the definition of $\epsilon^*$ that there is $(t^*,x^*)\in(-\infty,T_1]\times\overline{\Omega}$ with $x^*_1<-A$ such that
$$w^-(t^*,x^*)=U(x^*)+\epsilon^*.$$
But the function $U+\epsilon^*$ is a supersolution of~\eqref{u-infty} in $\{x\in\overline{\Omega}:x_1\le-A\}$, owing to~\eqref{defA} and the definitions of $\delta$ and $A$ (one has $f(U(x)+\epsilon^*)\le f(U(x))$ for all $x\in\overline{\Omega}$ with $x_1\le-A$). On the other hand, the function $w^-$ is a generalized subsolution of~\eqref{1} in $(-\infty,T_1]\times\overline{\Omega}$ (remember that $T_1\le T'$). The strong parabolic maximum principle (namely, the interior version if $x^*\in\Omega$ with $x^*_1<-A$, or the strong parabolic Hopf lemma if $x^*\in\partial\Omega$, still with $x^*_1<-A$) then imply that $w^-(t,x)=U(x)+\epsilon^*$ for all $t\le t^*$ and $x\in\overline{\Omega}$ with $x_1\le-A$. This is clearly ruled out, since $w^-\le1$ and $U(x)+\epsilon^*\to1+\epsilon^*>1$ as $x_1\to-\infty$. Therefore, $\epsilon^*=0$, hence
$$w^-(t,x)\le U(x)\ \hbox{ for all $t\le T_1$ and $x\in\overline{\Omega}$}.$$
In particular, owing to~\eqref{defunn}, there holds $u_n(-n,\cdot)=\sup_{s\le-n}w^-(s,\cdot)\le U$ in $\overline{\Omega}$ for all $n\in\N$ with $n\ge-T_1$. Hence, from the parabolic maximum principle, one has $u_n(t,\cdot)\le U$ in $\overline{\Omega}$ for all $n\in\N$ with $n\ge-T_1$ and for all $t\ge-n$. Therefore, $u(t,\cdot)\le U$ in $\overline{\Omega}$ for all $t\in\R$, which is the desired conclusion.
\end{proof}


\section{Dichotomy between complete propagation and blocking: proof of Theorem~\ref{thm2}}
\label{Sec-3}

This section is devoted to the proof of the dichotomy between complete propagation and blo\-cking for the solutions $u$ of~\eqref{1} and~\eqref{initial} constructed in Proposition~\ref{thm1}, for any given $R>0$ and $\alpha\in[0,\pi/2)$. The proof of this dichotomy relies itself on several Liouville type results of independent interest for the solutions of elliptic equations $\Delta U+f(U)=0$ in certain domains of~$\R^N$. We start in Section~\ref{Sec-3.1} with Liouville type results for~\eqref{u-infty} in funnel-shaped domains~$\Omega$, and we then continue in Section~\ref{Sec-3.2} with such results for stable solutions of $\Delta U+f(U)=0$ in the plane, a half-plane and the whole space. Theorem~\ref{thm2} is finally proved in Section~\ref{Sec-3.3}.


\subsection{Auxiliary Liouville type results for~\eqref{u-infty} in funnel-shaped domains}\label{Sec-3.1}

The first two auxiliary Liouville type results used in the proof of Theorem~\ref{thm2}, as well as in other main results, are Lemmas~\ref{lemliouville0} and~\ref{lemliouville1} below for the solutions~$u_\infty$ of~\eqref{u-infty} in funnel-shaped domains $\Omega$. They rely themselves on the existence of some not-too-small solutions of the same equation in large balls with Dirichlet boundary conditions. In the sequel, we call~$B_r(x)$ the open Euclidean ball of center $x\in\R^N$ and radius $r>0$, and we denote~$B_r:=B_r(0)$.

\begin{lemma}\label{lemsub}
There are $R_0>0$ and a $C^2(\overline{B_{R_0}})$ solution $\psi$ of the semilinear elliptic equation
\begin{align}
\label{sub sol in ball}
\begin{cases}
\Delta \psi+f(\psi)=0 &\text{in}~\overline{B_{R_0}},\\
0\le \psi<1 &\text{in}~ \overline{B_{R_0}},\\
\psi=0 &\text{on}~\partial B_{R_0},\\
\displaystyle\mathop{\max}_{\overline{B_{R_0}}}\,\psi=\psi(0)>\theta.
\end{cases}
\end{align}
\end{lemma}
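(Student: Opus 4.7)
The plan is to construct $\psi$ as a minimizer of a natural energy functional on $H^1_0(B_R)$ for $R$ large enough, and then combine elliptic regularity, the strong maximum principle and Schwarz symmetrization to extract all the stated properties.

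Set $F(s)=\int_0^s f(\tau)\,d\tau$. The bistable assumption~\eqref{f-bistable-2} ensures $F<0$ on $(0,\theta]$, while the positive-mass hypothesis gives $F(1)>0$. On $H^1_0(B_R)$ I would introduce the energy
\[
E_R(v)=\int_{B_R}\Big(\tfrac12|\nabla v|^2-F(v)\Big)\,dx.
\]
The linear extension of $f$ outside $[0,1]$, together with $f'(0)<0$ and $f'(1)<0$, makes $F$ bounded above on $\R$ and yields $-F(s)\ge c_0 s^2-C_0$ for some $c_0,C_0>0$; hence $E_R$ is coercive on $H^1_0(B_R)$, and the direct method of the calculus of variations (weak lower semicontinuity of the Dirichlet integral plus compactness of $H^1_0\hookrightarrow L^2$) produces a minimizer.

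To make the minimizer non-trivial and large enough, I next fix $R_0$ so that $\inf E_{R_0}<0$. Taking as test function the radial cutoff $v_0$ equal to $1$ on $B_{R-1}$ and linear in $|x|$ on the annulus $B_R\setminus B_{R-1}$ yields
\[
E_R(v_0)\le C_N\,R^{N-1}-F(1)\,|B_{R-1}|,
\]
which is strictly negative once $R$ is large; fix any such $R_0$ and work on $B_{R_0}$ from now on. The truncation $\psi\mapsto\min(\max(\psi,0),1)$ can only decrease $E_{R_0}$: it weakly decreases $\int|\nabla\psi|^2$ by the chain rule, and it increases $\int F(\psi)$ because $F$ is strictly increasing on $(-\infty,0]$ (where $f(s)=f'(0)s>0$) and strictly decreasing on $[1,+\infty)$ (where $f(s)=f'(1)(s-1)<0$), so that $F(0)\ge F(s)$ for $s<0$ and $F(1)\ge F(s)$ for $s>1$. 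We may therefore assume $0\le\psi\le 1$. Schwarz symmetrization preserves $\int F(\psi)$ and weakly decreases the Dirichlet integral, so we may further take $\psi$ radially symmetric and non-increasing in $|x|$, which gives $\max\psi=\psi(0)$ for free.

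It remains to read off the stated properties. The Euler--Lagrange equation is $\Delta\psi+f(\psi)=0$ in $B_{R_0}$ with $\psi=0$ on $\partial B_{R_0}$, and standard elliptic regularity together with $f\in C^{1,1}$ gives $\psi\in C^{2,\beta}(\overline{B_{R_0}})$. Since $E_{R_0}(\psi)\le E_{R_0}(v_0)<0=E_{R_0}(0)$, the minimizer is not identically zero; writing $f(\psi)=g(\psi)\psi$ with $g$ bounded, the strong maximum principle (and Hopf's lemma applied to $1-\psi$, using $f(1)=0$) then yields $0<\psi<1$ throughout $\overline{B_{R_0}}$ except at the boundary where $\psi=0$. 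Finally, $\psi(0)>\theta$ is forced by $E_{R_0}(\psi)<0$: otherwise $\psi\le\theta$ pointwise would give $F(\psi)\le 0$ everywhere and hence $E_{R_0}(\psi)\ge 0$, a contradiction. The only slightly delicate step is justifying the truncation inequality from the precise sign of $f$ outside $[0,1]$; once that is in hand, everything else is a mechanical assembly of variational and maximum-principle tools.
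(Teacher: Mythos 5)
Your proposal is correct and follows essentially the same route as the paper, which obtains $\psi$ as a minimizer of the very same functional $\varphi\mapsto\int_{B_{R_0}}|\nabla\varphi|^2/2-F(\varphi)$ over $H^1_0(B_{R_0})$ (the paper declares the proof standard, citing Berestycki--Lions for existence and Gidas--Ni--Nirenberg for the radial monotonicity, where you instead use truncation plus Schwarz symmetrization and a direct sign argument on the energy to get $\psi(0)>\theta$). All the details you supply -- coercivity from the affine extension of $f$, negativity of the energy of the plateau test function via $\int_0^1 f>0$, the truncation inequality, and the maximum-principle conclusions -- are sound.
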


\begin{proof}
The proof is standard and is therefore omitted. In short, it can be done by using variational arguments (see e.g.~\cite[Theorem~A]{BL1980} and~\cite[Problem~(2.25)]{GHS2020}): such a solution~$\psi$ is obtained as a minimizer in $H^1_0(B_{R_0})$ of the functional $\varphi\mapsto\int_{B_{R_0}}\!\!|\nabla\varphi|^2/2-\int_{B_{R_0}}\!\!F(\varphi)$, with~$F'\!=\!f$. Furthermore, such a minimizer is radially symmetric and decreasing in $|x|$ as soon as it is not identically $0$ (see~\cite{GNN1979}), and its maximal value, which is the value at the origin, converges to $1$ as the radius of the ball converges to $+\infty$, thanks to~\eqref{f-bistable-1}-\eqref{f-bistable-2}.
\end{proof}

In Proposition~\ref{thm1}, the constructed solutions $u$ of~\eqref{1} and~\eqref{initial} converge as $t\to+\infty$ to a stationary solution $u_\infty$ of~\eqref{u-infty}. By construction, $u_\infty$ satisfies $0<u_\infty\le1$ in $\overline{\Omega}$, and $u_\infty(x)\to1$ as $x_1\to-\infty$ (and this limit actually holds uniformly with respect to the parameters $(R,\alpha)$). But this limit is not enough to guarantee that $u_\infty\equiv1$ in $\overline{\Omega}$ in general: Theorems~\ref{thm5} and~\ref{thm6} provide some conditions for $u_\infty$ to be equal to $1$ or not, according to the values of~$R$ and~$\alpha$. We now prove in the next result (which will be used in the proof of Theorem~\ref{thm2}) that, whatever~$R$ and $\alpha$ may be, if $u_\infty(x)$ is assumed to converge to $1$ (or is assumed to be not too small) as~$x_1\to+\infty$, then $u_\infty$ is identically equal to $1$.

\begin{lemma}\label{lemliouville0}
Let $\Omega$ be a funnel-shaped domain satisfying~\eqref{defOmega}-\eqref{h}, and let $0<u_\infty\le1$ be a solution of~\eqref{u-infty} in $\overline{\Omega}$. If $\liminf_{x\in\overline{\Omega},\,x_1\to+\infty}u_\infty(x)>\theta$ with $\theta$ as in~\eqref{f-bistable-2}, then $u_\infty\equiv1$ in $\overline{\Omega}$.
\end{lemma}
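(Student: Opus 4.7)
My plan is to show $u_\infty\equiv 1$ by comparing $u_\infty$ from below with the monotone parabolic flow emanating from a well-chosen compactly supported weak subsolution of~\eqref{u-infty}. From the hypothesis $\liminf_{x\in\overline\Omega,\,x_1\to+\infty}u_\infty(x)>\theta$ I first fix $\eta\in(\theta,1)$ and $A>0$ such that $u_\infty(x)\ge\eta$ for every $x\in\overline\Omega$ with $x_1\ge A$, and pick some intermediate $\eta_0\in(\theta,\eta)$.

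To build the subsolution, I would refine Lemma~\ref{lemsub} by tuning the radius of the supporting ball (a continuity/bifurcation argument on the variational construction) to produce a positive radial solution $\psi\in C^2(\overline{B_r})$ of $\Delta\psi+f(\psi)=0$ with $\psi=0$ on $\partial B_r$ and $\max_{\overline{B_r}}\psi=\psi(0)\in(\theta,\eta_0)$. When $\alpha>0$, the cross-section $\{x_1=s\}\cap\Omega$ diverges as $s\to+\infty$, so there exists $x_0\in\Omega$ with $\overline{B_r(x_0)}\subset\Omega\cap\{x_1\ge A\}$; setting $\Psi(x)=\psi(x-x_0)$ on $\overline{B_r(x_0)}$ and $\Psi\equiv 0$ elsewhere yields a continuous weak subsolution of~\eqref{u-infty} with $\Psi\le\eta_0\le u_\infty$ on $\overline\Omega$. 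When $\alpha=0$ the domain is a straight cylinder and I would instead use a one-dimensional ground state $\psi_1$ on a large interval placed in $\{x_1\ge A\}$ (with $\max\psi_1\in(\theta,\eta_0)$), extended to be independent of $x'$ so that the Neumann condition on the lateral boundary is automatic, and again $\Psi\le u_\infty$. Let $v(t,x)$ be the solution of $v_t=\Delta v+f(v)$ on $[0,+\infty)\times\overline\Omega$ with Neumann boundary conditions and $v(0,\cdot)=\Psi$: since $\Psi$ is a weak subsolution of~\eqref{u-infty}, $v(t,\cdot)$ is nondecreasing in $t$ by the parabolic comparison principle, and since $\Psi\le u_\infty$ with $u_\infty$ stationary, $v(t,\cdot)\le u_\infty$ for every $t\ge 0$; parabolic estimates then provide a limit $v_\infty\in C^2(\overline\Omega)$, a stationary solution with $\Psi\le v_\infty\le u_\infty\le 1$.

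The main and final step is to show that $v_\infty\equiv 1$ in $\overline\Omega$, which immediately gives $u_\infty\ge v_\infty\equiv 1$ and hence the conclusion. My plan is to exploit the translation invariance of $\Omega$ along the $x_1$-direction by repeating the construction with translates $\Psi(\cdot-ke_1)$, $k\ge 0$, which remain admissible subsolutions below $u_\infty$ since $u_\infty\ge\eta$ on $\{x_1\ge A\}$: running the corresponding monotone parabolic flows and letting $k\to+\infty$ should force $v_\infty\ge\psi(0)>\theta$ along an unbounded sequence in $\Omega$ escaping to $x_1=+\infty$, and a sliding argument with ground states supported on larger and larger balls (respectively intervals) fitting inside the conical part (respectively cylinder) should then propagate this lower bound throughout $\overline\Omega$ and push $v_\infty$ up to $1$. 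This spreading-to-$1$ step is the main obstacle: it amounts to a Liouville-type rigidity statement ruling out intermediate stable stationary solutions in the funnel domain, parallel in spirit to---but not a direct consequence of---Proposition~\ref{proliouville}.
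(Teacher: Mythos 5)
Your proposal has two genuine gaps, one at the very first step and one at the last. First, the compactly supported bump you want to place under $u_\infty$ cannot be built with maximum arbitrarily close to $\theta$. For any nonnegative, radially decreasing solution (or subsolution) $\psi$ of $\Delta\psi+f(\psi)=0$ on a ball, vanishing on the boundary, the energy identity $\frac{d}{d\rho}\big[\frac12\psi'(\rho)^2+\int_0^{\psi(\rho)}f\big]=-\frac{N-1}{\rho}\,\psi'(\rho)^2\le0$ forces $\int_0^{\psi(0)}f(s)\,ds\ge0$, hence $\psi(0)\ge\theta^{**}$, where $\theta^{**}\in(\theta,1)$ is the zero of $s\mapsto\int_0^sf$ in $(\theta,1)$; the same computation (without the damping term) rules out the one-dimensional ground state you invoke for $\alpha=0$. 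Since the hypothesis only gives $\liminf u_\infty>\theta$, your constant $\eta$ may lie in $(\theta,\theta^{**})$, and then no admissible $\Psi\le u_\infty$ exists: the ``tuning of the radius'' cannot produce $\max\psi\in(\theta,\eta_0)$ for $\eta_0<\theta^{**}$. The paper circumvents this by first upgrading the liminf hypothesis to the full limit $u_\infty(x)\to1$ as $x_1\to+\infty$, via compactness: any limit of translates $u_\infty(\cdot+x^n)$ solves $\Delta U+f(U)=0$ in $\R^N$ or in a half-space with $\inf U>\theta$, and $f>0$ on $(\theta,1)$ forces $U\equiv1$. This preliminary step is absent from your argument and is needed before any bump can be slid under $u_\infty$.

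Second, and more seriously, your final step --- showing that the stationary limit $v_\infty$ of the monotone flow equals $1$ --- is exactly the content of the lemma and is left unproven; as you note yourself, it is ``the main obstacle'' and does not follow from Proposition~\ref{proliouville}. Without it there is no proof. The paper avoids this circularity entirely: once $u_\infty\to1$ as $x_1\to+\infty$ is known, it perturbs $f$ to $f-\epsilon$ (still bistable with positive mass, with upper stable state $\beta_\epsilon\in(\theta,1)$ and front speed $c_\epsilon>0$), fits the decreasing profile $\phi_\epsilon(-x_1+A)$ below $u_\infty$, and sweeps it in the $-x_1$ direction as the subsolution $\phi_\epsilon(-x_1-c_\epsilon t+A)$ (the Neumann condition is respected because $h$ is nondecreasing and $\phi_\epsilon$ is decreasing). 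Letting $t\to+\infty$ yields $u_\infty\ge\beta_\epsilon>\theta$ everywhere, and the ODE comparison with $V'=f(V)$, $V(0)=\beta_\epsilon$, $V(t)\to1$, concludes. To salvage your plan you would need both the preliminary limit $u_\infty\to1$ and a replacement for the unresolved spreading step, for instance precisely such a sweeping-front argument.
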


\begin{proof}
First of all, if $(x^n)_{n\in\N}=(x^n_1,x^n_2,\cdots,x^n_N)_{n\in\N}=(x^n_1,(x^n)')_{n\in\N}$ is any sequence in $\overline{\Omega}$ such that $x^n_1\to+\infty$ and $h(x^n_1)-|(x^n)'|\to+\infty$ as $n\to+\infty$, then, from standard elliptic estimates, the functions $x\mapsto u_\infty(x+x^n)$ converge in $C^2_{loc}(\R^N)$, up to extraction of a subsequence, to a~$C^2(\R^N)$ solution $U$ of
$$\Delta U+f(U)=0\ \hbox{ in $\R^N$},$$
such that $\theta<\inf_{\R^N}U\le\sup_{\R^N}U\le1$. Since $f(1)=0$ and $f>0$ in $(\theta,1)$, it then easily follows that $U\equiv1$ in $\R^N$. Similarly, if $(x^n)_{n\in\N}$ is any sequence in $\overline{\Omega}$ such that $x^n_1\to+\infty$ and $\limsup_{n\to+\infty}(h(x^n_1)-|(x^n)'|)<+\infty$, then there are an open half-space $H$ of $\R^N$ and a $C^2(\overline{H})$ function $U$ such that, up to extraction of a subsequence, $\|u_\infty(\cdot+x^n)-U\|_{C^2(K\cap(\overline{\Omega}-x^n))}\to0$ as~$n\to+\infty$ for every compact set $K\subset\overline{H}$. Hence, $U$ obeys $\Delta U+f(U)=0$ in $\overline{H}$ and $\nu\cdot\nabla U=0$ on $\partial H$, together with $\theta<\inf_{H}U\le\sup_{H}U\le1$. As above, one infers that $U\equiv1$ in $\overline{H}$. From the previous observations, it follows that
$$u_\infty(x)\to1\hbox{ as $x_1\to+\infty$}$$
with $x\in\overline{\Omega}$, that is, uniformly with respect to the variables $(x_2,\cdots,x_N)$.

Now, from~\eqref{f-bistable-1}-\eqref{f-bistable-2} and the affine $C^1$ extension of $f$ outside $[0,1]$, there is $\epsilon>0$ small enough such that the $C^1(\R)$ function $f-\epsilon$ satisfies $f(\alpha_\epsilon)-\epsilon=f(\theta_\epsilon)-\epsilon=f(\beta_\epsilon)-\epsilon=0$ for some
$$\alpha_\epsilon<0<\theta<\theta_\epsilon<\beta_\epsilon<1,$$
with $f'(\alpha_\epsilon)<0$, $f'(\theta_\epsilon)>0$, $f'(\beta_\epsilon)<0$, $f-\epsilon<0$ in $(\alpha_\epsilon,\theta_\epsilon)$, $f-\epsilon>0$ in $(\theta_\epsilon,\beta_\epsilon)$, and $\int_{\alpha_\epsilon}^{\beta_\epsilon}(f-\epsilon)>0$. Therefore, there exist $c_\epsilon>0$ and a $C^2(\R)$ function $\phi_\epsilon:\R\to(\alpha_\epsilon,\beta_\epsilon)$ solving
$$\phi_\epsilon''+c_\epsilon\phi_\epsilon'+f(\phi_\epsilon)-\epsilon=0\hbox{ in $\R$},\ \hbox{ and }\ \phi_\epsilon(-\infty)=\beta_\epsilon,\ \phi_\epsilon(+\infty)=\alpha_\epsilon.$$
Since $u_\infty$ is positive in $\overline{\Omega}$ and converges to $1$ as $x_1\to+\infty$, there exists $A>0$ such that $u_\infty(x)\ge\phi_\epsilon(-x_1+A)$ for all $x\in\overline{\Omega}$. But the function $\phi_\epsilon$ is decreasing in $\R$ and the function~$h$ in~\eqref{h} is nondecreasing. Hence, for each $t\in\R$, the function $x\mapsto\phi_\epsilon(-x_1-c_\epsilon t+A)$ has a nonpositive normal derivative at any point $x\in\partial\Omega$. Furthermore, $\underline{u}(t,x):=\phi_\epsilon(-x_1-c_\epsilon t+A)$ satisfies $\underline{u}_t=\Delta\underline{u}+f(\underline{u})-\epsilon\le\Delta\underline{u}+f(\underline{u})$ in $\R\times\R^N$ by definition of $\phi_\epsilon$. Remembering that $u_\infty\ge\underline{u}(0,\cdot)$ in $\overline{\Omega}$, the parabolic maximum principle then implies that $u_\infty\ge\underline{u}(t,\cdot)$ in $\overline{\Omega}$ for all~$t\ge0$. The limit as $t\to+\infty$ and the positivity of $c_\epsilon$ yield
$$u_\infty\ge\phi_\epsilon(-\infty)=\beta_\epsilon\ \hbox{ in $\overline{\Omega}$}.$$
Since $\theta<\beta_\epsilon\le u_\infty\le1$ and $f>0$ in $(\theta,1)$, one then gets that $u_\infty\equiv1$ in $\overline{\Omega}$, which is the desired conclusion.
\end{proof}

The next result, which can be viewed as a corollary of Lemmas~\ref{lemsub} and~\ref{lemliouville0}, will also be a key-ingredient in the proof of Theorems~\ref{thm5} and~\ref{thm7}. 

\begin{lemma}\label{lemliouville1}
Let $\Omega$ be a funnel-shaped domain satisfying~\eqref{defOmega}-\eqref{h}, and let $0<u_\infty\le1$ be a~$C^2(\overline{\Omega})$ solution of~\eqref{u-infty} in $\overline{\Omega}$. Let $R_0>0$ and $\psi\in C^2(\overline{B_{R_0}})$ be as in Lemma~$\ref{lemsub}$. If there is a point~$x_0\in\Omega$ such that $\overline{B_{R_0}(x_0)}\subset\overline{\Omega}$ and $u_\infty\ge\psi(\cdot-x_0)$ in $\overline{B_{R_0}(x_0)}$, then $u_\infty\equiv 1$ in $\overline{\Omega}$.
\end{lemma}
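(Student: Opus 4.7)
The approach combines a sliding argument for $u_\infty$ with a translation-limit procedure that verifies the hypothesis of Lemma~\ref{lemliouville0}. Throughout I assume $\alpha\in(0,\pi/2)$ (the case $\alpha=0$ being degenerate in view of~\eqref{h}). Let $\overline{\Omega}_{R_0}=\{y\in\overline{\Omega}:\overline{B_{R_0}(y)}\subset\overline{\Omega}\}$ and let $A$ denote its connected component containing $x_0$; for the funnel-shaped geometry, $A$ is unbounded in the $x_1$-direction, with cross sections exhausting the conical part of $\overline{\Omega}$ up to a boundary layer of width $R_0/\cos\alpha$.

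The first step is to show, by the sliding method, that $u_\infty\ge\psi(\cdot-y)$ in $\overline{B_{R_0}(y)}$ for every $y\in A$. The set of such $y$ is closed in $A$ by continuity, and open by the following argument: if the inequality holds at $y$, then $w:=u_\infty-\psi(\cdot-y)\ge0$ satisfies a linear elliptic equation $\Delta w+c(x)\,w=0$ in $\overline{B_{R_0}(y)}$ for some bounded $c$, obtained via Lipschitz linearization of $f$. On $\partial B_{R_0}(y)$, $\psi$ vanishes while $u_\infty>0$ throughout $\overline{\Omega}$ (by the strong maximum principle and Hopf's lemma applied to $u_\infty$, using $u_\infty\not\equiv0$ and the Neumann condition), so $w>0$ there; the strong maximum principle then gives $w>0$ on $\overline{B_{R_0}(y)}$, and compactness yields the same strict inequality at nearby centers $y'$. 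The connectedness of $A$ therefore yields the claim, and in particular $u_\infty\ge\psi(0)>\theta$ on $A$.

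Next I show that $u_\infty(x)\to 1$ uniformly as $x_1\to+\infty$ in $\overline{\Omega}$. Given any sequence $(x_n)\subset\overline{\Omega}$ with $(x_n)_1\to+\infty$, standard elliptic estimates provide a subsequential $C^2_{loc}$ limit $W$ of $u_\infty(\cdot+x_n)$. If $d(x_n,\partial\Omega)\to+\infty$, then $W$ solves $\Delta W+f(W)=0$ on $\R^N$; for each $y\in\R^N$ the translated center $y+x_n$ lies in $A$ for large $n$, so passing to the limit in the sliding inequality gives $W\ge\psi(\cdot-y)$ on $\overline{B_{R_0}(y)}$ for every $y\in\R^N$, hence $W\ge\psi(0)>\theta$ everywhere. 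Parabolic comparison with the spatially constant subsolution $\psi(0)$ (whose ODE evolution $\dot w=f(w)$ started at $\psi(0)>\theta$ converges to $1$) then forces $W\equiv 1$. If instead $d(x_n,\partial\Omega)$ stays bounded, $W$ lives on a half-space $H\subset\R^N$ with homogeneous Neumann condition on $\partial H$; Neumann reflection across $\partial H$ produces a $C^2$ entire solution $\tilde W$ on $\R^N$. For any $y_0\in H$ with $d(y_0,\partial H)\ge 2R_0$, the limit of the sliding inequality gives $\tilde W\ge\psi(\cdot-y_0)$ on $\overline{B_{R_0}(y_0)}$; rerunning the Step~1 sliding argument on the connected space $\R^N$ with $\tilde W$ in place of $u_\infty$ extends this to $\tilde W\ge\psi(0)>\theta$ on all of $\R^N$, so $\tilde W\equiv 1$ and $W\equiv 1$. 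In either case $u_\infty(x_n)\to W(0)=1$, which proves the uniform convergence.

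It follows that $\liminf_{x_1\to+\infty,\,x\in\overline{\Omega}}u_\infty(x)=1>\theta$, and Lemma~\ref{lemliouville0} immediately yields $u_\infty\equiv 1$ on $\overline{\Omega}$. The main obstacle is the boundary case: the interior sliding argument does not reach points of $\overline{\Omega}$ close to $\partial\Omega$, and the Neumann reflection trick that promotes the half-space limit $W$ to an entire solution $\tilde W$ on $\R^N$ (where sliding is unobstructed) is the essential device for overcoming it.
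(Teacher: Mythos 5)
Your proof is correct in substance for $\alpha>0$, but it reaches the crucial point --- controlling $u_\infty$ at points of $\overline{\Omega}$ close to $\partial\Omega$ --- by a genuinely different device than the paper. The paper slides the ball directly across the lateral boundary: for centers $y=(s,\sigma e')$ with $s$ large and $\sigma\in[0,h(s)]$, either $\overline{B_{R_0}(y)}\subset\overline{\Omega}$ or $\nu(x)\cdot(x-y)\ge0$ for every $x\in B_{R_0}(y)\cap\partial\Omega$, and since $\psi$ is radially nonincreasing this makes $\psi(\cdot-y)$ a subsolution of the \emph{Neumann} problem in $\overline{B_{R_0}(y)\cap\Omega}$; the sliding therefore continues unobstructed up to $\partial\Omega$, yielding $u_\infty>\psi(0)>\theta$ at every point with $x_1$ large, after which Lemma~\ref{lemliouville0} applies. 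You instead confine the sliding to interior centers (your set $A$) and recover the boundary layer by blow-up limits along sequences approaching $\partial\Omega$, Neumann reflection across the limiting half-space, and a Liouville-type step ($W\ge\psi(0)>\theta$ everywhere forces $W\equiv1$ by comparison with the ODE $\dot w=f(w)$). Both routes are sound. The paper's is more elementary and self-contained (no compactness or limiting procedure, and it also covers the degenerate case $\alpha=0$, which you exclude), and it delivers the pointwise bound $u_\infty>\psi(0)$ everywhere for $x_1$ large in one pass; yours trades the boundary subsolution computation for an extra limiting argument, at the price of a few geometric assertions left unverified: that the component $A$ is indeed connected from $x_0$ to the far conical region (this is exactly the paper's first two sliding legs --- move the center to the $x_1$-axis, then along the axis, using that $h$ is nondecreasing), that the translated centers $y+x_n$ and $y_0+x_n$ lie in $A$ for large $n$, and that the reflected function $\tilde W$ is positive (needed to restart the sliding on $\R^N$). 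All of these are true and easily filled in, so I would count your argument as a correct alternative proof rather than as having a gap.
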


\begin{proof}
Write
$$x_0=(x_{0,1},x'_0)$$
with $x_{0,1}\in\R$ and $x'_0\in\R^{N-1}$. Owing to the properties~\eqref{defOmega}-\eqref{h} satisfied by $\Omega$, one has $\overline{B_{R_0}(x_{0,1},sx'_0)}\subset\overline{\Omega}$ for all $s\in[0,1]$. Since $\psi$ satisfies~\eqref{sub sol in ball} and vanishes on $\partial B_{R_0}$, and since the solution $u_\infty$ of~\eqref{u-infty} is positive in $\overline{\Omega}$, the strong maximum principle implies that $u_\infty>\psi(\cdot-x_0)$ in $\overline{B_{R_0}(x_0)}$ and, by continuity, $u_\infty>\psi(\cdot-(x_{0,1},sx'_0))$ in $\overline{B_{R_0}(x_{0,1},sx'_0)}$ for all $s\in[\eta,1]$, for some $0\le\eta<1$. We then claim that
\be\label{claimmp}
u_\infty>\psi(\cdot-(x_{0,1},sx'_0))\hbox{ in $\overline{B_{R_0}(x_{0,1},sx'_0)}$ for all $s\in[0,1]$}.
\ee
Indeed, otherwise, there exists $s^*\in[0,1)$ such that $u_\infty\ge\psi(\cdot-(x_{0,1},s^*x'_0))$ in $\overline{B_{R_0}(x_{0,1},s^*x'_0)}$ with equality at a point $x^*\in\overline{B_{R_0}(x_{0,1},s^*x'_0)}$. The point $x^*$ can not lie on the boun\-dary $\partial B_{R_0}(x_{0,1},s^*x'_0)$, since $\psi(\cdot-(x_{0,1},s^*x'_0))$ vanishes there whereas $u_\infty$ is positive. Hence, $x^*$ is in the open ball $B_{R_0}(x_{0,1},s^*x'_0)$ and the strong maximum principle yields $u_\infty\equiv\psi(\cdot-(x_{0,1},s^*x'_0))$ in~$\overline{B_{R_0}(x_{0,1},s^*x'_0)}$, which is impossible on $\partial B_{R_0}(x_{0,1},s^*x'_0)$. Therefore,~\eqref{claimmp} holds and, in particular, $u_\infty>\psi(\cdot-(x_{0,1},0))$ in $\overline{B_{R_0}(x_{0,1},0)}$.

Similarly, since $\overline{B_{R_0}(s,0)}\subset\overline{\Omega}$ for all $s\ge x_{0,1}$, one then infers that $u_\infty>\psi(\cdot-(s,0))$ in~$\overline{B_{R_0}(s,0)}$ for all $s\ge x_{0,1}$. Consider then any
$$s\ge\max(x_{0,1},L\cos\alpha+R_0),$$
with $L>R>0$ and $\alpha\in[0,\pi/2)$ given in~\eqref{h}, and any unit vector $e'$ of $\R^{N-1}$. For each $\sigma\in[0,h(s)]$, two cases may occur, owing to~\eqref{defOmega}-\eqref{h}:
$$\left\{\baa{l}
\hbox{either $\overline{B_{R_0}(s,\sigma e')}\subset\overline{\Omega}$},\vspace{3pt}\\
\hbox{or $B_{R_0}(s,\sigma e')\cap\partial\Omega\neq\emptyset$ and $\nu(x)\cdot(x-(s,\sigma e'))\ge0$ for every $x\in B_{R_0}(s,\sigma e')\cap\partial\Omega$,}\eaa\right.$$
where $\nu(x)$ denotes the outward unit normal to $\Omega$ at $x$. In the latter case, one then has $\nu(x)\cdot\nabla\psi(x-(s,\sigma e'))\le0$, since the function $y\mapsto\psi(y)$ is radially symmetric and nonincreasing with respect to $|y|$ in $\overline{B_{R_0}}$. In all cases, for each $\sigma\in[0,h(s)]$, the function $\psi(\cdot-(s,\sigma e'))$ is a subsolution of~\eqref{u-infty} in $\overline{B_{R_0}(s,\sigma e')\cap\Omega}$ (this closed set is actually equal to $\overline{B_{R_0}(s,\sigma e')}\cap\overline{\Omega}$ from the definition of $\Omega$, since $\sigma\in[0,h(s)]$), and the open set $B_{R_0}(s,\sigma e')\cap\Omega$ is connected and not empty, with its boundary meeting $\partial B_{R_0}(s,\sigma e')\cap\overline{\Omega}$. Hence, the function $\psi(\cdot-(s,\sigma e'))$ can not be identically equal to $u_\infty$ in $\overline{B_{R_0}(s,\sigma e')\cap\Omega}$. Since $u_\infty>\psi(\cdot-(s,0))$ in $\overline{B_{R_0}(s,0)}$, one then gets as in the previous paragraph, by sliding $\psi$ below $u_\infty$ in the direction $(0,e')$ and using the strong interior maximum principle and the Hopf lemma, that $u_\infty>\psi(\cdot-(s,\sigma e'))$ in~$\overline{B_{R_0}(s,\sigma e')}\cap\overline{\Omega}$ for all $\sigma\in[0,h(s)]$. As a consequence,
$$u_\infty(s,\sigma e')>\psi(0)\ \hbox{ for all $s\ge\max(x_{0,1},L\cos\alpha+R_0)$ and $\sigma\in[0,h(s)]$}.$$
Since this holds for every unit vector $e'$ of $\R^{N-1}$, one infers that $u_\infty(x)>\psi(0)$ for all $x\in\overline{\Omega}$ with $x_1\ge\max(x_{0,1},L\cos\alpha+R_0)$. Since $u_\infty\le1$ in $\overline{\Omega}$ together with $\psi(0)>\theta$, one concludes from Lemma~\ref{lemliouville0} that $u_\infty\equiv1$ in $\overline{\Omega}$.  The proof of Lemma~\ref{lemliouville1} is thereby complete.
\end{proof}


\subsection{Auxiliary Liouville type results for stable solutions of $\Delta U\!+\!f(U)\!=\!0$}\label{Sec-3.2}

The last three auxiliary results for the proof of Theorem~\ref{thm2} are still Liouville type results for semilinear elliptic equations $\Delta U+f(U)=0$ in $\overline{\omega}$. But these results, of independent interest, deal with other geometric configurations: $\omega$ will be the two-dimensional plane, or a two-dimensional half-plane, or the whole space $\R^N$. In all these statements, we are concerned with stable solutions, in the sense of~\eqref{stable}.

\begin{proposition}\label{proliouville2}
Let $0\le U\le 1$ be a $C^2(\R^2)$ stable solution of $\Delta U+f(U)=0$ in $\R^2$. Then, either~$U\equiv0$ in $\R^2$ or $U\equiv 1$ in $\R^2$.
\end{proposition}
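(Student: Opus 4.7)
The proof proceeds in two main steps: first, I would show that stability in $\R^{2}$ forces $U$ to be one-dimensional via the Sternberg--Zumbrun scheme; then I would classify bounded one-dimensional stable solutions of the associated ODE $V''+f(V)=0$, using the bistable structure and the positive-mass assumption.

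For the first step, the key is a geometric Poincaré inequality. Differentiating the equation gives $\Delta U_{x_i}+f'(U)U_{x_i}=0$ for $i=1,2$; multiplying by $U_{x_i}$ and summing yields, where $|\nabla U|>0$,
\begin{equation*}
\Delta\bigl(\tfrac12|\nabla U|^{2}\bigr)=|\nabla^{2}U|^{2}-f'(U)|\nabla U|^{2}.
\end{equation*}
Combined with the pointwise identity $|\nabla^{2}U|^{2}-|\nabla|\nabla U||^{2}=|A|^{2}|\nabla U|^{2}+|\nabla_{T}|\nabla U||^{2}$, where $|A|$ is the norm of the second fundamental form of the level line through a regular point and $\nabla_T$ is the tangential gradient along that level line, and testing the stability condition~\eqref{stable} with $\psi=\eta|\nabla U|$ for $\eta\in C^{1}_{c}(\R^{2})$, one obtains after integration by parts
\begin{equation*}
\int_{\R^{2}}\eta^{2}\bigl(|A|^{2}|\nabla U|^{2}+|\nabla_{T}|\nabla U||^{2}\bigr)\,dx\le\int_{\R^{2}}|\nabla\eta|^{2}\,|\nabla U|^{2}\,dx
\end{equation*}
(the degeneracy at critical points of $U$ is handled by regularizing $|\nabla U|$ by $\sqrt{|\nabla U|^{2}+\varepsilon}$ and letting $\varepsilon\downarrow 0$). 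Specializing to the two-dimensional logarithmic cutoff $\eta_{R}=1$ on $B_{R}$, $\eta_{R}=\log(R^{2}/|x|)/\log R$ on $B_{R^{2}}\setminus B_{R}$, and $\eta_{R}=0$ outside $B_{R^{2}}$, one computes $\int_{\R^{2}}|\nabla\eta_{R}|^{2}=O(1/\log R)$; since $|\nabla U|$ is globally bounded by interior elliptic estimates applied to the bounded solution $U$, the right-hand side tends to $0$ as $R\to+\infty$. Hence $|A|\equiv 0$ and $|\nabla_{T}|\nabla U||\equiv 0$ on $\{|\nabla U|>0\}$, forcing the level lines of $U$ there to be parallel straight lines, so that $U(x)=V(e\cdot x)$ for some unit vector $e\in\R^{2}$ and some $V\in C^{2}(\R,[0,1])$.

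For the second step, $V''+f(V)=0$ on $\R$ admits the conserved quantity $\tfrac12(V')^{2}+F(V)\equiv\mathrm{const}$, with $F'=f$; a bounded orbit in $[0,1]$ is therefore constant, periodic around $\theta$, or a heteroclinic joining two zeros of $f$. A heteroclinic between $0$ and $1$ would require $F(0)=F(1)$, contradicting $\int_{0}^{1}f(s)ds>0$ in~\eqref{f-bistable-2}. A periodic $V$ has a sign-changing derivative $V'$ which itself solves the linearized ODE $-\psi''-f'(V)\psi=0$ on $\R$; by Sturm oscillation, the Dirichlet principal eigenvalue of $-\partial_{x}^{2}-f'(V)$ on any interval containing a zero of $V'$ is negative, and tensoring the corresponding eigenfunction with a one-dimensional cutoff in the transverse direction produces an admissible $\psi$ violating~\eqref{stable}, ruling out periodic $V$. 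Thus $V$ is a constant root of $f$, namely $0$, $\theta$ or $1$. The value $\theta$ is excluded because $f'(\theta)>0$: the Dirichlet principal eigenvalue of $-\Delta$ on $B_{R}\subset\R^{2}$ tends to $0$ as $R\to+\infty$, so for $R$ large its first eigenfunction provides a test function $\psi$ with $\int(|\nabla\psi|^{2}-f'(\theta)\psi^{2})<0$. This leaves $U\equiv 0$ or $U\equiv 1$.

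The main technical difficulty lies in the first step: the Sternberg--Zumbrun inequality must be established carefully near the (possibly nonempty) critical set of $U$, and the role of dimension two is precisely that only in $\R^{2}$ does one have a family of cutoffs $\eta_R$ whose $L^{2}$ gradient norm tends to zero — the argument would fail verbatim in $\R^{N}$ for $N\ge 3$ without additional structural hypotheses (such as the axisymmetry used in Proposition~\ref{proliouville}).
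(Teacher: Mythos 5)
Your overall strategy is sound and genuinely different from the paper's. The paper does not use the Sternberg--Zumbrun geometric Poincar\'e inequality: instead it constructs from stability a positive solution $\varphi$ of the linearized equation $-\Delta\varphi-f'(U)\varphi=\lambda_\infty\varphi\ge0$, as a limit of principal Dirichlet eigenfunctions on growing discs, and then invokes the two-dimensional Liouville theorem of Berestycki--Caffarelli--Nirenberg (\cite{BCN1997}, Theorem~1.8) to conclude that every bounded solution $e\cdot\nabla U$ of the linearized equation is a constant multiple of $\varphi$. Each directional derivative therefore either vanishes identically or has a strict constant sign, so $U$ is either constant or a \emph{strictly monotone} one-dimensional profile. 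That extra monotonicity is what keeps the paper's ODE step short: the only nonconstant candidate is a monotone heteroclinic, excluded by the energy identity $\int_{V(-\infty)}^{V(+\infty)}f=0$ against~\eqref{f-bistable-2}. Your route only yields one-dimensionality, not monotonicity, so you must classify \emph{all} bounded orbits of $V''+f(V)=0$ with values in $[0,1]$ --- and that is where your proof has a gap.

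The gap is that the trichotomy ``constant, periodic around $\theta$, or heteroclinic'' is incomplete: it omits the homoclinic orbit to $0$. With your normalization $F'=f$, one has $F(0)=0$ and $F(1)=\int_0^1 f>0$, so there exists $v_0\in(\theta,1)$ with $F(v_0)=0$ and $F<0$ on $(0,v_0)$; the energy level $\{\tfrac12(V')^2+F(V)=0\}$ then carries a nonconstant bounded orbit with $V(\pm\infty)=0$ and $\max V=v_0>\theta$, taking values in $[0,1]$. This solution is neither constant, nor periodic, nor heteroclinic, and it exists precisely because of the positive-mass hypothesis, so it must be ruled out explicitly. It is indeed unstable --- $V'$ is an exponentially decaying solution of $-\psi''-f'(V)\psi=0$ changing sign exactly once, hence the second $L^2$ eigenfunction of $-\partial_x^2-f'(V)$, so the bottom of the spectrum is negative and your tensoring trick applies --- but the case has to be stated and handled. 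Relatedly, your Sturm claim that ``the Dirichlet principal eigenvalue on any interval containing a zero of $V'$ is negative'' is not literally correct (a short interval around a single zero has a large positive principal eigenvalue); what is true is that $V'$ is a principal Dirichlet eigenfunction with eigenvalue $0$ between two consecutive zeros (or on a half-line to one side of the unique zero in the homoclinic case), so that any strictly larger interval has negative principal eigenvalue. With these repairs, and a careful treatment of the passage from vanishing curvature of the regular level lines to the global representation $U(x)=V(e\cdot x)$ (which you rightly flag), your argument goes through.
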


\begin{proof}
The proof uses some properties of the principal eigenvalues of some elliptic operators, together with some results of~\cite{BCN1997}. First of all, since $f\in C^{1,1}([0,1])$, standard elliptic estimates imply that $U$ is of class $C^3(\R^2)$ and has bounded partial derivatives up to the third order.

Now, for any $R>0$, let
\be\label{lambdaR}
\lambda(-\Delta-f'(U),B_R)=\min_{\psi\in H^1_0(B_R),\,\|\psi\|_{L^2(B_R)}=1}\int_{B_R}|\nabla\psi|^2-f'(U)\psi^2
\ee
and
$$\lambda(-\Delta,B_R)=\min_{\psi\in H^1_0(B_R),\,\|\psi\|_{L^2(B_R)}=1}\int_{B_R}|\nabla\psi|^2$$
be the principal eigenvalues of the operators $-\Delta-f'(U)$ and $-\Delta$ in $B_R$ (the two-dimensional Euclidean disc) with Dirichlet boundary conditions on $\partial B_R$. One has $\lambda(-\Delta-f'(U),B_R)\ge0$ by assumption, and
$$\lambda(-\Delta-f'(U),B_R)\le\max_{[0,1]}|f'|+\lambda(-\Delta,B_R)=\max_{[0,1]}|f'|+\frac{\lambda(-\Delta,B_1)}{R^2}.$$
Hence $\sup_{R\ge1}|\lambda(-\Delta-f'(U),B_R)|<+\infty$. Furthermore, the map $R\mapsto\lambda(-\Delta-f'(U),B_R)$ is nonincreasing (and even actually decreasing) in $(0,+\infty)$, and there exists
$$\lambda_\infty=\lim_{R\to+\infty}\lambda(-\Delta-f'(U),B_R)\in[0,+\infty).$$
Notice also that the map $x\mapsto f'(U(x))$ is Lipschitz continuous from the $C^{1,1}$ regularity of $f$ and the Lipschitz continuity of $U$. For each $n\in\N$ with $n\ge1$, there exists a unique principal eigenfunction $\varphi_n\in C^2(\overline{B_n})$ solving
$$-\Delta\varphi_n-f'(U)\varphi_n=\lambda(-\Delta-f'(U),B_n)\varphi_n\ \hbox{ in $\overline{B_n}$},$$
with $\varphi_n=0$ on $\partial B_n$, $\varphi_n>0$ in $B_n$ and $\varphi_n(0)=1$. The Harnack inequality and standard elliptic estimates then imply that, up to extraction of a subsequence, the functions $\varphi_n$ converge in $C^2_{loc}(\R^2)$ to a positive function $\varphi$ solving $-\Delta\varphi-f'(U)\varphi=\lambda_\infty\varphi\ge0$ in $\R^2$ (together with $\varphi(0)=1$). Since the space dimension is here equal to $2$, and since each function $e\cdot\nabla U$ (with a unit  vector $e$ of $\R^2$) is bounded in $\R^2$ and solves
$$\Delta(e\cdot\nabla U)+f'(U)(e\cdot\nabla U)=0\ \hbox{ in $\R^2$},$$
it follows from~\cite[Theorem~1.8]{BCN1997} that $e\cdot\nabla U\equiv C_e\varphi$ in $\R^2$ for some real number $C_e$. In particular, each partial derivative $e\cdot\nabla U$ is either identically $0$ or has a strict constant sign in~$\R^2$. As a consequence, either the function $U$ is constant, or it depends on one variable only and it is strictly monotone in that variable.

If $U$ is constant, it may be equal to $0$, $\theta$ or $1$, from~\eqref{f-bistable-1}-\eqref{f-bistable-2}. However, if $U$ were equal to~$\theta$, then
$$0\le\lambda(-\Delta-f'(U),B_R)=-f'(\theta)+\lambda(-\Delta,B_R)=-f'(\theta)+R^{-2}\lambda(-\Delta,B_1)\mathop{\longrightarrow}_{R\to+\infty}-f'(\theta)<0,$$
a contradiction. Thus, if $U$ is constant, then either $U\equiv 0$ or $U\equiv1$ in $\R^2$.

If $U$ were one-dimensional and strictly monotone, that is $U(x)=V(x\cdot e)$ for some unit vector~$e$ and~$V$ increasing in $\R$, then $V$ would solve $V''+f(V)=0$ in $\R$ with $(V(-\infty),V(+\infty))\in\{(0,\theta),(0,1),(\theta,1)\}$, but the integration of this equation against $V'$ over $\R$ would lead to $\int_{V(-\infty)}^{V(+\infty)}f(s)ds=0$, contradicting~\eqref{f-bistable-1}-\eqref{f-bistable-2}. Thus, this monotone one-dimensional case is ruled out.

As a conclusion, one has shown that $U$ is constant in $\R^2$, and identically equal to $0$ or $1$. The proof of Proposition~\ref{proliouville2} is thereby complete.
\end{proof}

From Proposition~\ref{proliouville2}, the following analogue in a half-plane easily follows.

\begin{proposition}\label{proliouville2bis}
Let $H$ be an open half-plane and let $0\le U\le 1$ be a $C^2(\overline{H})$ stable solution of~$\Delta U+f(U)=0$ in $\overline{H}$ with Neumann boundary condition $\nu\cdot\nabla U=0$ on $\partial H$. Then, either~$U\equiv0$ in $\overline{H}$ or $U\equiv 1$ in $\overline{H}$.
\end{proposition}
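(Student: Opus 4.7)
The natural strategy is to extend $U$ by reflection to a stable solution of the same equation on the full plane $\R^2$, and then invoke Proposition~\ref{proliouville2}. After a rigid motion, one may assume $H=\{x=(x_1,x_2)\in\R^2:x_2>0\}$, so that the Neumann condition reads $\partial_{x_2}U(x_1,0)=0$. Define the even extension
\[
\tilde{U}(x_1,x_2)=U(x_1,|x_2|),\qquad (x_1,x_2)\in\R^2.
\]

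The first step is to verify that $\tilde U\in C^2(\R^2)$ and satisfies $\Delta\tilde U+f(\tilde U)=0$ in $\R^2$. The function $\tilde U$ is continuous across $\{x_2=0\}$ by construction, $\partial_{x_2}\tilde U$ is odd in $x_2$ and coincides at $x_2=0^{\pm}$ with $\pm\partial_{x_2}U(x_1,0)=0$ by the Neumann condition, and the second derivatives $\partial_{x_1x_1}\tilde U$ and $\partial_{x_2x_2}\tilde U$ are even in $x_2$ and match continuously at $x_2=0$; finally, $\partial_{x_1x_2}\tilde U$ is odd in $x_2$ and its matching at $x_2=0$ follows by differentiating the identity $\partial_{x_2}U(x_1,0)\equiv 0$ with respect to $x_1$. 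The PDE then holds in each open half-plane and extends to $\R^2$ by continuity of the second derivatives, while $0\le\tilde U\le 1$ is obvious.

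The second step is to transfer stability from $U$ to $\tilde U$. Given any $\psi\in C^1(\R^2)$ with compact support, decompose it into its even and odd parts in the variable~$x_2$:
\[
\psi_s(x_1,x_2)=\frac{\psi(x_1,x_2)+\psi(x_1,-x_2)}{2},\qquad \psi_a(x_1,x_2)=\frac{\psi(x_1,x_2)-\psi(x_1,-x_2)}{2}.
\]
Since $\tilde U$, and hence $f'(\tilde U)$, is even in $x_2$, the cross terms $\int_{\R^2}\nabla\psi_s\cdot\nabla\psi_a$ and $\int_{\R^2}f'(\tilde U)\psi_s\psi_a$ both vanish by parity (each integrand is odd in $x_2$), whence
\[
\int_{\R^2}|\nabla\psi|^2-f'(\tilde U)\psi^2=\sum_{\sharp\in\{s,a\}}\int_{\R^2}|\nabla\psi_\sharp|^2-f'(\tilde U)\psi_\sharp^2=2\sum_{\sharp\in\{s,a\}}\int_{\overline H}|\nabla\psi_\sharp|^2-f'(U)\psi_\sharp^2\ge 0,
\]
where the last inequality applies the assumed stability of $U$ to the restrictions $\psi_s|_{\overline H}$ and $\psi_a|_{\overline H}$, each of which is a $C^1(\overline H)$ function with compact support.

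Thus $\tilde U$ is a $C^2(\R^2)$ stable solution of $\Delta \tilde U+f(\tilde U)=0$ with $0\le \tilde U\le 1$, and Proposition~\ref{proliouville2} yields $\tilde U\equiv 0$ or $\tilde U\equiv 1$ on $\R^2$; restricting to $\overline H$ gives the conclusion. The only mildly delicate point in this plan is the $C^2$ regularity of the reflection across $\partial H$, but it reduces to differentiating the Neumann boundary condition tangentially; once that is in hand, the parity-based transfer of stability is routine and the whole argument hinges on the planar Liouville result already established.
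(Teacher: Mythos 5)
Your proof is correct and follows essentially the same route as the paper: reflect $U$ evenly across $\partial H$, check that the extension is a $C^2$ stable solution on $\R^2$, and invoke Proposition~\ref{proliouville2}. The only cosmetic difference is in the stability transfer, where the paper simply splits $\int_{\R^2}(|\nabla\psi|^2-f'(V)\psi^2)$ over the two half-planes and applies the stability of $U$ to $\psi|_{\overline H}$ and its reflection, rather than decomposing $\psi$ into even and odd parts; both versions are valid.
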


\begin{proof}
Up to translation and rotation, one can assume that $H=\{(x_1,x_2)\in\R^2:x_2<0\}$ without loss of generality. Thus, $\partial H=\R\times\{0\}$ and $\partial_{x_2}U(x_1,0)=0$ for all $x_1\in\R$. Consider now the function $V$ in $\R^2$ defined by
$$V(x_1,x_2)=\left\{\baa{ll} U(x_1,x_2) & \hbox{if }x_2\le0,\vspace{3pt}\\
U(x_1,-x_2) & \hbox{if }x_2>0.\eaa\right.$$
It is of class $C^2(\R^2)$ and it solves $\Delta V+f(V)=0$ in $\R^2$, together with $0\le V\le1$ in $\R^2$. Furthermore, for any $\psi\in C^1(\R^2)$ with compact support, one has
$$\baa{rcl}
\displaystyle\int_{\R^2}\big(|\nabla\psi|^2-f'(V)\psi^2\big) & = & \displaystyle\int_{\{x_2<0\}}\big(|\nabla\psi|^2-f'(U)\psi^2\big)+\int_{\{x_2>0\}}\big(|\nabla\psi|^2-f'(V)\psi^2\big)\vspace{3pt}\\
& = & \displaystyle\int_H\big(|\nabla\psi|^2-f'(U)\psi^2\big)+\int_H\big(|\nabla\tilde{\psi}|^2-f'(U)\tilde{\psi}^2\big),\eaa$$
where $\tilde{\psi}(x_1,x_2)=\psi(x_1,-x_2)$. But the restrictions of the functions $\psi$ and $\tilde{\psi}$ in $\overline{H}$ are of class~$C^1(\overline{H})$ with compact support in $\overline{H}$. Therefore, the two terms of the right-hand side of the previous formula are nonnegative by assumption. Hence, 
$$\int_{\R^2}\big(|\nabla\psi|^2-f'(V)\psi^2\big)\ge0$$
for any $\psi\in C^1(\R^2)$ with compact support. Proposition~\ref{proliouville2} implies that $V$ is identically equal to either $0$ or $1$ in $\R^2$, which leads to the desired conclusion for $U$ in $\overline{H}$.
\end{proof}

The last Liouville type result is Proposition~\ref{proliouville}, which was stated in Section~\ref{Sec-1.3}. It is concerned with stable axisymmetric solutions in $\R^N$, and it also follows from Proposition~\ref{proliouville2}, as well as from some arguments inspired by~\cite{BCN1997}.

\begin{proof}[Proof of Proposition~$\ref{proliouville}$]
Throughout the proof, $U:\R^N\to[0,1]$ is a stable $C^2(\R^N)$ solution of $\Delta U+f(U)=0$ in $\R^N$, which is axisymmetric with respect to the $x_1$-axis. Let us first show that
\be\label{claimunif}
\hbox{either $U(x_1,x')\to0\ $ or $\ U(x_1,x')\to1\ $ as $|x'|\to+\infty,\ $ uniformly in $x_1\in\R$.}
\ee
To show this property, since $U$ is continuous and axisymmetric with respect to the $x_1$-axis, it is sufficient to show that, for any sequence
$$(x^n)_{n\in\N}=((x^n_1,x^n_2,0,\cdots,0))_{n\in\N}$$
in $\R^N$ such that $x^n_2\to+\infty$ as $n\to+\infty$, one has, up to extraction of a subsequence, either $U(x^n)\to0$ or $U(x^n)\to1$. Consider any such sequence $(x^n)_{n\in\N}$. Up to extraction of a subsequence, the functions $U(\cdot+x^n)$ converge in $C^2_{loc}(\R^N)$ to a solution $U_\infty$ of $\Delta U_\infty+f(U_\infty)=0$ in $\R^N$, with~$0\le U_\infty\le1$ in $\R^N$. Furthermore, since $U$ is axisymmetric with respect to the $x_1$-axis, and $x^n=(x^n_1,x^n_2,0,\cdots,0)$ with $x^n_2\to+\infty$, there is a $C^2(\R^2)$ function $V_\infty$ such that~$U_\infty(x)=V_\infty(x_1,x_2)$ for all $x\in\R^N$. Notice that $V_\infty$ then obeys $\Delta V_\infty+f(V_\infty)=0$ in $\R^2$. Let us now show that~$V_\infty$ is stable, in the sense of~\eqref{stable} with $\omega=\R^2$. Consider any $C^1(\R^2)$ function $\psi$ with compact support. For $n\in\N$, let us define
$$\psi_n(x)=\psi(x_1-x^n_1,|x'|-x^n_2)$$
for $x=(x_1,x')\in\R^N$. Since $\psi$ is compactly supported in $\R^2$ and since $x^n_2\to+\infty$ as $n\to+\infty$, the function $\psi_n$ is of class $C^1(\R^N)$ with compact support for all $n$ large enough. Together with the semistability of the solution $U$ of $\Delta U+f(U)=0$ in $\R^N$, one gets that, for all $n$ large enough,
$$\int_{\R^N}|\nabla\psi_n|^2-f'(U)\psi_n^2\ge0.$$
But since both $U$ and $\psi_n$ are axisymmetric with respect to the $x_1$-axis, the above inequality means that, for all $n$ large enough,
$$\int_{\R^2}\Big[|\nabla\psi(x_1-x^n_1,x_2-x^n_2)|^2-f'(U(x_1,x_2,0,\cdots,0))\psi(x_1-x^n_1,x_2-x^n_2)^2\Big]\,dx_1dx_2\ge0,$$
that is,
$$\int_{\R^2}\Big[|\nabla\psi(x_1,x_2)|^2-f'(U(x_1+x^n_1,x_2+x^n_2,0,\cdots,0))\psi(x_1,x_2)^2\Big]\,dx_1dx_2\ge0.$$
Since $U(x_1+x^n_1,x_2+x^n_2,0,\cdots,0)\to U_\infty(x_1,x_2,0,\cdots,0)=V_\infty(x_1,x_2)$
locally uniformly in~$(x_1,x_2)\in\R^2$ as $n\to+\infty$, since $\psi$ has a compact support, and since $f$ is of class $C^1(\R)$, one gets that
$$\int_{\R^2}|\nabla\psi|^2-f'(V_\infty)\psi^2\ge0.$$
As a consequence, the $C^2(\R^2)$ function $V_\infty$ is a stable solution of $\Delta V_\infty+f(V_\infty)=0$ in $\R^2$ such that $0\le V_\infty\le1$ in $\R^2$. Proposition~\ref{proliouville2} then implies that either $V_\infty\equiv0$ in $\R^2$, or $V_\infty\equiv1$ in~$\R^2$. In particular, either $U(x^n)\to0$ or $U(x^n)\to1$ as $n\to+\infty$, at least for a subsequence. But as already emphasized, this is sufficient to infer~\eqref{claimunif}.

Let us now show that $|\nabla U(x_1,x')|$ decays to $0$ exponentially as $|x'|\to+\infty$, uniformly in~$x_1\in\R$. To do so, let us consider only the limit $0$ in~\eqref{claimunif} (the limit $1$ can be handled similarly even if it means changing $U$ into $1-U$ and $f(s)$ into $-f(1-s)$). Since $f'(0)<0=f(0)$, there is~$\delta\in(0,1)$ such that
\be\label{fdelta}
f(s)\le\frac{f'(0)}{2}\,s\ \hbox{ for all $s\in[0,\delta]$}
\ee
and there is then $A>0$ such that
\be\label{defA2}
0\le U(x_1,x')\le\delta\ \hbox{ for all $|x'|\ge A$ and $x_1\in\R$}.
\ee
Take $\gamma>0$ small enough such that $\gamma^2+f'(0)/2<0$. The function $\overline{U}(x)=\delta\,e^{-\gamma(|x'|-A)}$ obeys
$$\baa{rcl}
\displaystyle\Delta\overline{U}(x)+\frac{f'(0)}{2}\,\overline{U}(x) & = & \displaystyle\delta\Big(\gamma^2-\frac{(N-2)\gamma}{|x'|}\Big)e^{-\gamma(|x'|-A)}+\frac{f'(0)\,\delta\,e^{-\gamma(|x'|-A)}}{2}\vspace{3pt}\\
& \le & \displaystyle\delta\Big(\gamma^2+\frac{f'(0)}{2}\Big)e^{-\gamma(|x'|-A)}<0\eaa$$
for all $|x'|\ge A$ and $x_1\in\R$. Since $U(x_1,x')\le\delta=\overline{U}(x_1,x')$ for all~$|x'|=A$ and $x_1\in\R$, together with~\eqref{fdelta}-\eqref{defA2} and~\eqref{claimunif} with limit $0$, it then easily follows from the maximum principle that $U(x_1,x')\le\overline{U}(x_1,x')=\delta\,e^{-\gamma(|x'|-A)}$ for all $|x'|\ge A$ and~$x_1\in\R$. From standard elliptic estimates, the function $|\nabla U|$ is bounded in $\R^N$ and moreover there is a positive real number~$B$ such that
\be\label{nablaU}
|\nabla U(x)|\le B\,e^{-\gamma|x'|}\hbox{ for all $x\in\R^N$}.
\ee

Now, as in the proof of Proposition~\ref{proliouville2}, from the semistability of $U$, one gets the existence of a positive $C^2(\R^N)$ function $\varphi$ and of a nonnegative real number $\lambda_\infty$ such that
$$-\Delta\varphi-f'(U)\varphi=\lambda_\infty\varphi\ge0\ \hbox{ in }\R^N.$$
Consider any unit vector $e$ of $\R^N$ and denote
$$w=\frac{e\cdot\nabla U}{\varphi}.$$
From standard elliptic estimates and the $C^{1,1}$ smoothness of $f$, the function $U$ is of class $C^3(\R^N)$ and it is elementary to check that the $C^2(\R^N)$ function $w$ obeys
$$w\,\nabla\cdot(\varphi^2\nabla w)=\lambda_\infty\varphi^2w^2\ge0\ \hbox{ in }\R^N.$$
Take a $C^\infty(\R)$ function $\zeta$ such that $0\le\zeta\le1$ in $\R$, $\zeta=1$ in $[-1,1]$ and $\zeta=0$ in $\R\setminus(-2,2)$. For $R\ge1$ and $x=(x_1,x')\in\R^N$, we define
$$\zeta_R(x)=\zeta\Big(\frac{x_1}{R^{N-1}}\Big)\times\zeta\Big(\frac{|x'|}{R}\Big).$$
Each function $\zeta_R$ is of class $C^\infty(\R^N)$ with compact support and there is a positive real number $C$ such that, for every $R\ge1$ and $x=(x_1,x')\in\R^N$, $|\partial_{x_1}\zeta_R(x)|\le C\,R^{1-N}$ and $|\nabla\zeta_R(x)|\le C\,R^{-1}$. For any $R\ge1$, let us define
$$\left\{\baa{l}
E_R=\big\{x=(x_1,x')\in\R^N:|x'|\le R,\,R^{N-1}\le|x_1|\le 2R^{N-1}\big\},\vspace{3pt}\\
F_R=\big\{x=(x_1,x')\in\R^N:R\le |x'|\le 2R,\,|x_1|\le 2R^{N-1}\big\},\vspace{3pt}\\
G_R=E_R\cup F_R.\eaa\right.$$
Observe that $|\nabla\zeta_R|=0$ in $\R^N\setminus G_R$ and that $|\nabla\zeta_R(x)|=|\partial_{x_1}\zeta_R(x)|\le C\,R^{1-N}$ for all $x\in E_R$. By integrating the inequation $w\,\nabla\cdot(\varphi^2\nabla w)\ge0$ against $\zeta_R^2$ (notice that all integrals below converge since all involved functions are continuous and $\zeta_R$ is compactly supported), one gets that
\be\label{integrals}\baa{rcl}
\displaystyle\int_{\R^N}\!\varphi^2\zeta_R^2|\nabla w|^2\le-2\int_{\R^N}\!w\,\varphi^2\zeta_R\,\nabla\zeta_R\cdot\nabla w & \!\!=\!\! & \displaystyle-2\int_{G_R}\!w\,\varphi^2\zeta_R\,\nabla\zeta_R\cdot\nabla w\vspace{3pt}\\
& \!\!\le\!\! & \displaystyle2\,\sqrt{\int_{G_R}\!\varphi^2\zeta_R^2|\nabla w|^2}\,\sqrt{\int_{G_R}\!w^2\varphi^2|\nabla\zeta_R|^2}.\eaa
\ee
Furthermore, from the above estimates on $\nabla\zeta_R$ and from~\eqref{nablaU}, one has
$$\baa{rcl}
\displaystyle\int_{G_R}w^2\varphi^2|\nabla\zeta_R|^2 & = & \displaystyle\int_{E_R}|e\cdot\nabla U|^2|\nabla\zeta_R|^2+\int_{F_R}|e\cdot\nabla U|^2|\nabla\zeta_R|^2\vspace{3pt}\\
& \le & \displaystyle 2B^2C^2\omega_{N-1}+B^2e^{-2\gamma R}C^2R^{-2}\omega_{N-1}((2R)^{N-1}-R^{N-1})\times(4R^{N-1}),\eaa$$
where $\omega_{N-1}$ denotes the $(N-1)$-dimensional Lebesgue measure of the unit Euclidean ball in~$\R^{N-1}$. Therefore, there is a positive real number $D$ such that
\be\label{D}
\int_{G_R}w^2\varphi^2|\nabla\zeta_R|^2\le D
\ee
for all $R\ge1$, hence
$$\int_{\R^N}\varphi^2\zeta_R^2|\nabla w|^2\le 4D$$
by~\eqref{integrals}. Therefore, owing to the definition of $\zeta_R$, the integral $\int_{\R^N}\varphi^2|\nabla w|^2$ converges and
$$\int_{G_R}\varphi^2\zeta_R^2|\nabla w|^2\to0\ \hbox{ as }R\to+\infty.$$
Together with~\eqref{integrals}-\eqref{D}, one infers that
$$\int_{\R^N}\varphi^2|\nabla w|^2=0,$$
hence $w$ is constant in $\R^N$. Owing to the definition of $w=(e\cdot\nabla U)/\varphi$, this implies that $e\cdot\nabla U$ is either of a strict constant sign, or is identically $0$ in $\R^N$. By taking now $e=(0,e')$ with a unit vector $e'$ of $\R^{N-1}$, and remembering that $U(x_1,x')\to0$ as $|x'|\to+\infty$, one infers that $e\cdot\nabla U\equiv0$ in $\R^N$ for any such $e=(0,e')$, and finally $U\equiv0$ in $\R^N$. As already underlined, the case of the limit $1$ in~\eqref{claimunif} can be handled similarly, and the proof of Proposition~\ref{proliouville} is thereby complete.
\end{proof}


\subsection{Proof of Theorem~\ref{thm2}}\label{Sec-3.3}

Throughout this section, we consider a domain $\Omega$ of the type~\eqref{defOmega}-\eqref{h}, for any $R>0$ and $\alpha\in[0,\pi/2)$, and we call~$u$ the time-increasing solution of~\eqref{1} and~\eqref{initial} given in Proposition~\ref{thm1}. Let $0<u_\infty\le1$ be its $C^2_{loc}(\overline{\Omega})$ limit as $t\to+\infty$. The function $u_\infty$ solves~\eqref{u-infty}, and
\be\label{inequuinfty}
0<u(t,x)<u_\infty(x)\le 1\ \hbox{ for all $(t,x)\in\R\times\overline{\Omega}$}.
\ee

Let us first notice that~\eqref{defw-} and~\eqref{uw} imply that $u(t,x)\to1$ as $x_1\to-\infty$, at least for every $t$ negative enough. Since $u$ is increasing in $t$ and $u<1$ in $\R\times\overline{\Omega}$, one infers that, for every $\tau\in\R$,
\be\label{unif1t}
u(t,x)\to1\hbox{ as $x_1\to-\infty$, uniformly with respect to $t\ge\tau$}.
\ee
Together with~\eqref{inequuinfty}, it follows that, if the solution $u$ is blocked in the sense of~\eqref{blocking}, then the convergence of $u(t,\cdot)$ to $u_\infty$ as $t\to+\infty$ is actually uniform in $\overline{\Omega}$.

After this preliminary observation, the first main step of the proof of Theorem~\ref{thm2} consists in showing that $u_\infty$ is a stable solution of~\eqref{u-infty} in $\overline{\Omega}$ in the sense of~\eqref{stable} with $\omega=\Omega$, whether~$u_\infty$ be identically $1$ or less than $1$ in $\overline{\Omega}$.

\begin{lemma}\label{lemstable}
The function $u_\infty$ is a stable solution of~\eqref{u-infty} in $\overline{\Omega}$ in the sense of~\eqref{stable}. 
\end{lemma}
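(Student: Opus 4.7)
The plan is to derive the stability of $u_\infty$ from the positivity of $\varphi:=u_t$, which serves as a quasi-eigenfunction for the linearized operator $-\Delta-f'(u)$ with Neumann boundary conditions. First, differentiating~\eqref{1} in $t$ and invoking Proposition~\ref{thm1} (which gives $u_t>0$ in $\R\times\overline\Omega$), one obtains that $\varphi>0$ satisfies $\varphi_t=\Delta\varphi+f'(u)\varphi$ in $\R\times\overline\Omega$ with $\nu\cdot\nabla\varphi=0$ on $\partial\Omega$. Passing to $\ell:=\log\varphi$ then yields
$$\ell_t=\Delta\ell+|\nabla\ell|^2+f'(u)\ \text{in}\ \R\times\overline\Omega,\qquad\nu\cdot\nabla\ell=0\ \text{on}\ \partial\Omega,$$
and it is the logarithmic form of the equation that produces a one-sided stability inequality via the standard ground-state trick.

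Next, for any $\psi\in C^1(\overline\Omega)$ with compact support, I would multiply the equation for $\ell$ by $\psi^2$, integrate by parts over $\Omega$ (the Neumann condition on $\ell$ kills the boundary term since $\psi$ need not vanish on $\partial\Omega$), and bound the cross-term via $|2\psi\,\nabla\psi\cdot\nabla\ell|\le|\nabla\psi|^2+\psi^2|\nabla\ell|^2$. After cancellation of the $\psi^2|\nabla\ell|^2$ contributions, this gives the pointwise-in-$t$ estimate
$$\int_\Omega\bigl(|\nabla\psi|^2-f'(u(t,\cdot))\psi^2\bigr)\,dx\ \ge\ -\int_\Omega\psi^2\,\ell_t(t,\cdot)\,dx\qquad\text{for every}\ t\in\R.$$
These computations are routine. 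The real step is to pass to the limit $t\to+\infty$: since the right-hand side equals $-\int_\Omega\psi^2\,u_{tt}/u_t\,dx$ and is not obviously controlled at any individual time, I would fix $T_0\in\R$ and average over $t\in[T_0,T_0+T]$. The time integral of $\ell_t$ telescopes into $\log u_t(T_0+T,\cdot)-\log u_t(T_0,\cdot)$, so dividing by $T$ gives
$$\frac{1}{T}\int_{T_0}^{T_0+T}\!\int_\Omega\bigl(|\nabla\psi|^2-f'(u(t,\cdot))\psi^2\bigr)\,dx\,dt\ \ge\ \frac{1}{T}\int_\Omega\psi^2\bigl[\log u_t(T_0,\cdot)-\log u_t(T_0+T,\cdot)\bigr]\,dx.$$

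Letting $T\to+\infty$, the left-hand side converges to $\int_\Omega(|\nabla\psi|^2-f'(u_\infty)\psi^2)\,dx$ thanks to the $C^2_{loc}(\overline\Omega)$ convergence $u(t,\cdot)\to u_\infty$ from Proposition~\ref{thm1} and the compactness of $\operatorname{supp}\psi$. On the right-hand side, $\log u_t(T_0,\cdot)$ is bounded on $\operatorname{supp}\psi$ since $u_t(T_0,\cdot)$ is continuous and strictly positive there, so its $T^{-1}$-averaged contribution vanishes. The main obstacle is the remaining term $-T^{-1}\!\int_\Omega\psi^2\log u_t(T_0+T,\cdot)\,dx$, which a priori could diverge. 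I would handle it by first showing that $u_t(t,\cdot)\to 0$ locally uniformly in $\overline\Omega$ as $t\to+\infty$: by standard parabolic Schauder estimates any sequence $u(t_n+\cdot,\cdot)$ with $t_n\to+\infty$ is precompact in $C^{1,2}_{(t,x);loc}$, and the monotonicity in $t$ forces every such limit to be the constant-in-time solution $u_\infty$, whence $u_t(t_n+\cdot,\cdot)\to 0$ locally uniformly. Therefore, for $T$ large enough, $u_t(T_0+T,\cdot)\le 1$ on $\operatorname{supp}\psi$, so $-\log u_t(T_0+T,\cdot)\ge 0$ there and the residual term is nonnegative. Taking $\liminf$ as $T\to+\infty$ yields~\eqref{stable} with $U=u_\infty$ and $\omega=\Omega$. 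Note that no upper bound on the rate at which $u_t\to 0$ is needed; only the sign of $\log u_t$ at large times is used, which is what makes the time-averaging argument robust.
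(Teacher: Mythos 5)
Your argument is correct, but it takes a genuinely different route from the paper's. The paper tests the identity $0\le u_t=\Delta(u-u_\infty)+f(u)-f(u_\infty)$ against $\psi^2/(u_\infty-u(t,\cdot))$, i.e.\ it uses the positive function $u_\infty-u(t,\cdot)$ as a supersolution of the operator $-\Delta-c(t,\cdot)$ with $c=\frac{f(u_\infty)-f(u)}{u_\infty-u}$; the classical ground-state inequality then gives $\int_{\overline\Omega}|\nabla\psi|^2-c(t,\cdot)\psi^2\ge0$ \emph{at every fixed time} $t$, and one simply lets $t\to+\infty$, using only $u_t\ge0$, $u<u_\infty$ and the local convergence $u(t,\cdot)\to u_\infty$. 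You instead use $\varphi=u_t>0$ as an exact positive solution of the linearization around $u(t,\cdot)$ itself; since $-\Delta\varphi-f'(u)\varphi=-\varphi_t$ is not sign-definite, the pointwise-in-$t$ inequality carries the extra term $-\int\psi^2\ell_t$, and you must time-average to telescope it and then invoke the (correct, and easily justified by parabolic estimates plus monotonicity) fact that $u_t\to0$ locally uniformly so that $-\log u_t(T_0+T,\cdot)\ge0$ for large $T$. All the steps check out: $u_t$ is indeed $C^{1,2}$ by Schauder estimates since $f\in C^{1,1}$, the Neumann condition passes to $\varphi$ and $\ell$, and the Ces\`aro mean of the left-hand side converges to $\int_{\overline\Omega}|\nabla\psi|^2-f'(u_\infty)\psi^2$ because $\psi$ has compact support. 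What the paper's route buys is economy (no logarithm, no averaging, no need for strict positivity of $u_t$ or for the decay $u_t\to0$); what yours buys is that it linearizes directly at $u(t,\cdot)$ with the true derivative $f'(u)$ rather than a difference quotient, which is the more standard Allegretto--Piepenbrink viewpoint and would adapt to situations where a positive time-derivative, but no monotone limit comparison $u<u_\infty$, is available.
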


\begin{proof}
Consider any $C^1(\overline{\Omega})$ function $\psi$ with compact support. The function $u$ satisfies
\be\label{uuinfty}
0\le u_t=\Delta(u-u_\infty)+f(u)-f(u_\infty)
\ee
in $\R\times\overline{\Omega}$. Since $\nu(x)\cdot\nabla(u(t,x)-u_\infty(x))=0$ for all $(t,x)\in\R\times\partial\Omega$ and since $\psi$ has compact support, multiplying~\eqref{uuinfty} by the nonnegative function $\psi^2/(u_\infty-u(t,\cdot))$ and integrating by parts over $\overline{\Omega}$, at a fixed time $t\in\R$, leads to
$$\baa{rcl}
0 & \le & \displaystyle\int_{\overline{\Omega}}\nabla(u_\infty-u(t,\cdot))\cdot\nabla\Big(\frac{\psi^2}{u_\infty-u(t,\cdot)}\Big)-\frac{f(u(t,\cdot))-f(u_\infty)}{u(t,\cdot)-u_\infty}\,\psi^2\vspace{3pt}\\
& = & \displaystyle\int_{\overline{\Omega}}2\,\frac{\psi\,\nabla(u_\infty-u(t,\cdot))\cdot\nabla\psi}{u_\infty-u(t,\cdot)}-\frac{|\nabla(u_\infty-u(t,\cdot))|^2\psi^2}{(u_\infty-u(t,\cdot))^2}-\frac{f(u(t,\cdot))-f(u_\infty)}{u(t,\cdot)-u_\infty}\,\psi^2\vspace{3pt}\\
& \le & \displaystyle\int_{\overline{\Omega}}|\nabla\psi|^2-\frac{f(u(t,\cdot))-f(u_\infty)}{u(t,\cdot)-u_\infty}\,\psi^2,\eaa$$
where all the above integrals converge since $\psi$ has compact support and all integrated functions or fields are at least continuous in $\overline{\Omega}$. But since $\psi$ has compact support and $u(t,\cdot)\to u_\infty$ as $t\to+\infty$ at least locally uniformly in $\overline{\Omega}$, the passage to the limit as $t\to+\infty$ in the above formula yields
$$0\le\int_{\overline{\Omega}}|\nabla\psi|^2-f'(u_\infty)\,\psi^2.$$
From the arbitrariness of $\psi\in C^1(\overline{\Omega})$ with compact support, the proof is complete.
\end{proof}

\begin{proof}[Proof of Theorem~$\ref{thm2}$] In order to show that $u$ either propagates completely in the sense of~\eqref{complete} or is blocked in the sense of~\eqref{blocking}, we have to show that either $u_\infty\equiv1$ in $\overline{\Omega}$, or $u_\infty(x)\to0$ as~$x_1\to+\infty$. Since the case $\alpha=0$ is trivial, as already noticed in the introduction ($u_\infty\equiv1$ in~$\overline{\Omega}$ in this case), one can assume that
$$\alpha>0$$
in the sequel. From Lemma~\ref{lemliouville0}, it is then sufficient to show that either $u_\infty(x)\to1$ as $x_1\to+\infty$ or $u_\infty(x)\to0$ as $x_1\to+\infty$. Since, for each $B\in\R$, the set $\{x\in\overline{\Omega}:x_1\ge B\}$ is connected and since $u_\infty$ is continuous in $\overline{\Omega}$, it is sufficient to show that, for any sequence $(x^n)_{n\in\N}$ with
$$x^n_1\to+\infty\ \hbox{ as $n\to+\infty$},$$
then, up to extraction of a subsequence, either $u_\infty(x^n)\to0$ or $u_\infty(x^n)\to1$ as $n\to+\infty$. Consider such a sequence $(x^n)_{n\in\N}$ in the sequel. Since the functions $u$ and $u_\infty$ are axisymmetric with respect to the $x_1$-axis, one can assume without loss of generality that
$$x^n=(x^n_1,x^n_2,0,\cdots,0),\ \hbox{ with }0\le x^n_2\le h(x^n_1),$$
for each $n\in\N$. Up to extraction of a subsequence, three cases can occur: either $\sup_{n\in\N}x^n_2<+\infty$, or $x^n_2\to+\infty$ and $h(x^n_1)-x^n_2\to+\infty$ as $n\to+\infty$, or $\sup_{n\in\N}(h(x^n_1)-x^n_2)<+\infty$. We consider these three cases separately.

Let us firstly consider the case $\sup_{n\in\N}x^n_2<+\infty$. Call
$$y^n=(x^n_1,0,\cdots,0).$$
Here, up to extraction of a subsequence, the functions $u_\infty(\cdot+y^n)$ converge in $C^2_{loc}(\R^N)$ to a~$C^2(\R^N)$ solution $U$ of $\Delta U+f(U)=0$ in $\R^N$ which is axisymmetric with respect to the $x_1$-axis (since so is $u_\infty$). Furthermore, $0\le U\le1$ in $\R^N$. Let us now show that $U$ is stable in the sense of~\eqref{stable} (Proposition~\ref{proliouville} will then yield the desired conclusion). Pick any $C^1(\R^N)$ function $\psi$ with compact support $K$. For $n\in\N$, denote $\psi_n(x)=\psi(x-y^n)$ for $x\in\overline{\Omega}$. Each function $\psi_n$ is of class $C^1(\overline{\Omega})$ with compact support, hence
$$\int_{\overline{\Omega}}|\nabla\psi_n|^2-f'(u_\infty)\psi_n^2\ge0$$
by the semistability of $u_\infty$ established in Lemma~\ref{lemstable}. But, for every $n$ large enough, the support~$y^n+K$ of $\psi_n$ is included in $\overline{\Omega}$, and the previous inequality then means that
$$\int_K|\nabla\psi|^2-f'(u_\infty(\cdot+y^n))\psi^2\ge0.$$
Since $u_\infty(\cdot+y^n)\to U$ as $n\to+\infty$ at least locally uniformly in $\R^N$ and $f$ is of class $C^1(\R)$, one concludes by passing to the limit $n\to+\infty$
$$\int_{\R^N}|\nabla\psi|^2-f'(U)\psi^2=\int_K|\nabla\psi|^2-f'(U)\psi^2\ge0.$$
Therefore, $U$ is a stable solution of $\Delta U+f(U)=0$ in $\R^N$ and it satisfies the other assumptions of Proposition~\ref{proliouville}. One then deduces that either $U\equiv0$ in $\R^N$ or $U\equiv1$ in~$\R^N$. In particular, since the sequence $(x^n_2)_{n\in\N}$ was assumed to be bounded, one concludes that either $u_\infty(x^n)\to0$ or $u_\infty(x^n)\to1$, up to extraction of a subsequence.

In the second case, we assume that $x^n_2\to+\infty$ and $h(x^n_1)-x^n_2\to+\infty$ as $n\to+\infty$. Define $U_n(x)=u_\infty(x+x^n)$ for $x\in\overline{\Omega}-x^n$. From standard elliptic estimates, together with the axisymmetry of $u_\infty$ with respect to the $x_1$-axis, the functions~$U_n$ converge in $C^2_{loc}(\R^N)$, up to extraction of a subsequence, to a $C^2(\R^N)$ function $\tilde{U}$, which actually depends on $(x_1,x_2)$ only, that is,
$$\tilde{U}(x_1,\cdots,x_N)=U(x_1,x_2)$$
for some $C^2(\R^2)$ function $U$, and there holds $\Delta U+f(U)=0$ in $\R^2$. Furthermore, $0\le U\le1$ in~$\R^2$. Let us now show that $U$ satisfies the condition~\eqref{stable} with $\omega=\R^2$, and Proposition~\ref{proliouville2} will then yield the desired conclusion. So, consider any $C^1(\R^2)$ function $\psi$ with compact support~$K$. For $n\in\N$, define the following function $\psi_n$ in $\overline{\Omega}$ by:
$$\psi_n(x)=\psi_n(x_1,x')=\left\{\baa{ll}
\psi(x_1-x^n_1,|x'|-x^n_2) & \hbox{if }(x_1,|x'|)\in K+(x^n_1,x^n_2),\vspace{3pt}\\
0 & \hbox{otherwise}.\eaa\right.$$
Since $\lim_{n\to+\infty}x^n_2=+\infty$, it follows that, for every $n$ large enough, $\psi_n$ is a $C^1(\overline{\Omega})$ function with compact support. Lemma~\ref{lemstable} implies that, for all $n$ large enough,
$$\int_{\overline{\Omega}}|\nabla\psi_n|^2-f'(u_\infty)\psi_n^2\ge0.$$
But since both $u_\infty$ and $\psi_n$ are axisymmetric with respect to the $x_1$-axis, the above inequality means that, for all $n$ large enough,
\be\label{ineqpsin1}\baa{l}
\displaystyle\int_{\{x_1\in\R,\,0\le x_2\le h(x_1)\}}\Big[|\nabla\psi(x_1-x^n_1,x_2-x^n_2)|^2\vspace{3pt}\\
\qquad\qquad\qquad\qquad\qquad-f'(u_\infty(x_1,x_2,0,\cdots,0))\psi(x_1-x^n_1,x_2-x^n_2)^2\Big]\,dx_1dx_2\ge0.\eaa
\ee
Since both sequences $(x^n_2)_{n\in\N}$ and $(h(x^n_1)-x^n_2)_{n\in\N}$ converge to $+\infty$ and since $\psi$ has compact support, denoted by $K$, the previous inequality means that, for all $n$ large enough,
\be\label{ineqpsin2}
\int_K\Big[|\nabla\psi(x_1,x_2)|^2-f'(u_\infty(x_1+x^n_1,x_2+x^n_2,0,\cdots,0))\psi(x_1,x_2)^2\Big]\,dx_1dx_2\ge0.
\ee
Since $u_\infty(x_1+x^n_1,x_2+x^n_2,0,\cdots,0)\to\tilde{U}(x_1,x_2,0,\cdots,0)=U(x_1,x_2)$ locally uniformly in $(x_1,x_2)\in\R^2$ as $n\to+\infty$, and since $f$ is of class $C^1(\R)$, one gets that
$$\int_{\R^2}|\nabla\psi|^2-f'(U)\psi^2\ge0.$$
As a consequence, the $C^2(\R^2)$ function $U$ is a stable solution of $\Delta U+f(U)=0$ in $\R^2$ such that $0\le U\le1$ in $\R^2$. Proposition~\ref{proliouville2} then implies that either $U\equiv0$ in $\R^2$ or $U\equiv1$ in $\R^2$, that is, either $\tilde{U}\equiv0$ or $\tilde{U}\equiv1$ in $\R^N$. Hence, either $u_\infty(x^n)\to0$ or $u_\infty(x^n)\to1$, up to extraction of a subsequence.

Consider thirdly the case $\sup_{n\in\N}h(x^n_1)-x^n_2<+\infty$. Define
$$y^n=(x^n_1,h(x^n_1),0,\cdots,0),\ \ U_n(x)=u_\infty(x+y^n)\hbox{ for }x\in\overline{\Omega}-y^n,$$
and
$$H=\{(x_1,x_2)\in\R^2:x_2<x_1\tan\alpha\},$$
which is an open half-plane of $\R^2$. From standard elliptic estimates, together with the definitions~\eqref{defOmega}-\eqref{h} and the axisymmetry of $u_\infty$ with respect to the $x_1$-axis, there is a $C^2(\overline{H}\times\R^{N-2})$ function $\tilde{U}$, which actually depends on $(x_1,x_2)$ only, that is,
$$\tilde{U}(x_1,\cdots,x_N)=U(x_1,x_2)$$
for some $U\in C^2(\overline{H})$, such that, up to extraction of a subsequence, $\|U_n-\tilde{U}\|_{C^2(\mathcal{K}\cap(\overline{\Omega}-y^n))}\to0$ as $n\to+\infty$ for every compact set $\mathcal{K}\subset\R^N$ (notice that, for each such $\mathcal{K}$, there holds $\mathcal{K}\cap(\overline{\Omega}-y^n)\subset\overline{H}\times\R^{N-2}$ for all $n$ large enough). The function $U$ then satisfies
$$\Delta U+f(U)=0\hbox{ in $\overline{H}$},$$
together with $\nu\cdot\nabla U=0$ on $\partial H$ and $0\le U\le1$ in $\overline{H}$. Let us now show that $U$ satisfies the condition~\eqref{stable} with $\omega=H$, and Proposition~\ref{proliouville2bis} will then yield the desired conclusion. So, consider any $C^1(\overline{H})$ function $\psi$ with compact support $K$. For $n\in\N$, define the following function $\psi_n$ in $\overline{\Omega}$ by:
$$\psi_n(x)=\psi_n(x_1,x')=\left\{\baa{ll}
\psi(x_1-x^n_1,|x'|-h(x^n_1)) & \hbox{if }(x_1,|x'|)\in K+(x^n_1,h(x^n_1)),\vspace{3pt}\\
0 & \hbox{otherwise}.\eaa\right.$$
Since $\lim_{n\to+\infty}h(x^n_1)=+\infty$, it follows that, for all $n$ large enough, $\psi_n$ is a $C^1(\overline{\Omega})$ function with compact support. Lemma~\ref{lemstable} implies that, for all $n$ large enough,
$$\int_{\overline{\Omega}}|\nabla\psi_n|^2-f'(u_\infty)\psi_n^2\ge0.$$
But since both $u_\infty$ and $\psi_n$ are axisymmetric with respect to the $x_1$-axis, and since $h(x_1)=x_1\tan\alpha$ for all $x_1\ge L\cos\alpha$, together with $x^n_1\to+\infty$ as $n\to+\infty$, the definition of $\psi_n$ and the previous inequality then yield~\eqref{ineqpsin1}-\eqref{ineqpsin2}, with $x^n_2$ replaced by $h(x^n_1)$, for all $n$ large enough. Since $u_\infty(x_1+x^n_1,x_2+h(x^n_1),0,\cdots,0)\to\tilde{U}(x_1,x_2,0,\cdots,0)=U(x_1,x_2)$ uniformly in~$K$ (because~$K\times\{0\}^{N-2}\subset\overline{\Omega}-y^n$ for all $n$ large enough), and since $f$ is of class $C^1(\R)$, one gets that
$$\int_{\overline{H}}|\nabla\psi|^2-f'(U)\psi^2\ge0.$$
As a consequence, the $C^2(\R^2)$ function $U$ is a stable solution of $\Delta U+f(U)=0$ in $\R^2$ such that $0\le U\le1$ in $\R^2$. Proposition~\ref{proliouville2} then implies that either $U\equiv0$ in $\R^2$ or $U\equiv1$ in $\R^2$, that is, either $\tilde{U}\equiv0$ or $\tilde{U}\equiv1$ in $\R^N$. Hence, either $u_\infty(x^n)\to0$ or $u_\infty(x^n)\to1$, up to extraction of a subsequence.

As a conclusion, for any sequence $(x^n)_{n\in\N}$ in $\overline{\Omega}$ such that $x^n_1\to+\infty$ as $n\to+\infty$, one has, up to extraction of a subsequence, either $u_\infty(x^n)\to0$ or $u_\infty(x^n)\to1$ as $n\to+\infty$. As already emphasized, this leads to the desired conclusion, and the proof of Theorem~\ref{thm2} is thereby complete.
\end{proof}


\section{Transition fronts and long-time behavior of the level sets: proofs of Theorems~\ref{thm3} and~\ref{thm4}}\label{Sec-4}

This section is devoted to the proofs of Theorems~\ref{thm3} and~\ref{thm4}. For any given $R>0$ and $\alpha\in[0,\pi/2)$, we especially show that the solution $0<u<1$ of~\eqref{1} emanating from the planar front $\phi(x_1-ct)$ in the sense of~\eqref{initial} is a transition front connecting $1$ and $0$, and we show further more precise estimates on the position of the level sets at large time in case of complete propagation. But we start in the next subsection with the, immediate, proof of Theorem~\ref{thm4}.


\subsection{Proof of Theorem~\ref{thm4}}

We here assume that $u$ is blocked, in the sense of~\eqref{blocking}. Since $u(t,x)-\phi(x_1-ct)\to0$ as~$t\to-\infty$ uniformly in $x\in\overline{\Omega}$, and since $\phi(-\infty)=1$ and $\phi(+\infty)=0$, one infers that
$$\sup_{t\le-A,\,x\in\overline{\Omega},\,x_1-ct\le-A}|u(t,x)-1|\to0\ \hbox{ and }\ \sup_{t\le-A,\,x\in\overline{\Omega},\,x_1-ct\ge A}u(t,x)\to0\ \hbox{ as $A\to+\infty$}.$$
Furthermore, from~\eqref{unif1t}, one knows that, for every $\tau\in\R$, $u(t,x)\to1$ as $x_1\to-\infty$, uniformly with respect to $t\ge\tau$. Since $0<u(t,x)<u_\infty(x)$ for all $(t,x)\in\R\times\overline{\Omega}$ and $u_\infty(x)\to0$ as~$x_1\to+\infty$, there also holds
$$u(t,x)\to0\ \hbox{ as $x_1\to+\infty$, uniformly in $t\ge\tau$},$$
for every $\tau\in\R$. All these properties, owing to the definition~\eqref{defOmega}-\eqref{h} of $\Omega$, imply that $u$ is a transition front connecting $1$ and $0$, with the sets $\Omega^\pm_t$ and $\Gamma_t$ given for instance by~\eqref{Gamma_tbis}. In particular, $u$ does not have any global mean speed in the sense of Definition~\ref{Def1} (but one can still say that it has a ``past'' speed equal to $c$, and a ``future'' speed equal to $0$, following the terminology used in~\cite{HR2016}).


\subsection{Proof of Theorem~\ref{thm3}}

We here assume that $\alpha>0$ and the solution $u$ of~\eqref{1} with past condition~\eqref{initial} propagates completely, namely $u(t,\cdot)\to 1$ as $t\to+\infty$ locally uniformly in $\overline\Omega$. We will prove, thanks to a comparison argument, that the level sets of $u$ can be sandwiched between two expanding spherical surfaces at large time in $\overline{\Omega^+}$, and that $u$ is a transition front with sets $\Gamma_t$ and $\Omega^\pm_t$ defined by~\eqref{Gamma_t}-\eqref{Omega_t}. Moreover, we show that along each level set the function $u$ converges locally to the planar traveling front at large time, in which a Liouville type theorem of Berestycki and the first author in~\cite{BH2007} for entire solutions of the bistable equation plays an essential role.

\subsubsection*{Large time estimates of $u$ for $x\in\overline{\Omega^+}$ with $|x|$ large}

We aim at proving the key Lemma~\ref{lemma-super sub} below, which gives refined bounds, for large $t$ and for~$x\in\overline{\Omega^+}$ with large norm $|x|$, of the solution $u$ of~\eqref{1} satisfying the complete propagation condition \eqref{complete}. This lemma is based on the construction, inspired by Fife and McLeod~\cite{FM1977} and Uchiyama~\cite{U1985}, of suitable sub- and supersolutions. For this purpose, let us first define a function $\vartheta$ in~$[0,+\infty)$ by
\begin{equation*}
\label{vartheta}
\vartheta(t)=\frac{2\,(\ln(t+1))^{3/2}}{3}. 
\end{equation*} 
Notice that 
\begin{equation}
\label{vartheta-1}
\vartheta(t)\ge 0\ \hbox{ and }\ 0 \le \vartheta'(t)=\frac{\sqrt{\ln(t+1)}}{t+1}<1~~\text{for all}~ t\ge 0,
\end{equation}
and 
$$\int_{0}^{+\infty} e^{-r\vartheta(t)}dt<+\infty~~\text{for all}~ r> 0.$$
We also recall that $L>0$ is given in \eqref{h}.

\begin{lemma}
\label{lemma-super sub}
There exist $\tau>0$, $\tau_1\in\R$, $\tau_2\in\mathbb{R}$, $z_1\in\mathbb{R}$, $z_2\in\mathbb{R}$, $\delta>0$ and $\mu>0$  such that 
\begin{equation}
\label{sup-1}\baa{l}
\displaystyle u(t,x)\le \phi\Big(|x|-c(t-\tau_1+\tau)+\frac{N-1}{c}\ln (t-\tau_1+\tau)+z_1\Big)+\delta e^{-\delta \vartheta(t-\tau_1)}+\delta e^{-\mu(|x|-L)}\vspace{3pt}\\
\qquad\qquad\qquad\qquad\hbox{for all $t\ge \tau_1$ and $x\in\overline{\Omega^+}$ with $|x|\ge L$},\eaa
\ee
and 
\begin{equation}
\label{sub-1}\baa{l}
\displaystyle u(t,x)\ge \phi\Big(|x|-c(t-\tau_2+\tau)+\frac{N-1}{c}\ln (t-\tau_2+\tau) + z_2\Big)-\delta e^{-\delta\vartheta (t-\tau_2)}-\delta e^{-\mu(|x|-L)}\vspace{3pt}\\
\qquad\qquad\qquad\qquad\hbox{for all $t\ge \tau_2$ and $x\in\overline{\Omega^+}$ with $|x|\ge L$}.\eaa
\ee
\end{lemma}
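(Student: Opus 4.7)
My plan is to construct a supersolution $v^+$ (resp.\ a subsolution $v^-$) of \eqref{1} on the cylindrical space-time region $[\tau_i,+\infty)\times(\overline{\Omega^+}\cap\{|x|\ge L\})$ of precisely the form appearing on the right-hand side of \eqref{sup-1} (resp.\ \eqref{sub-1}), and then invoke the parabolic comparison principle. The ansatz will be radial in $|x|$, in the spirit of Fife-McLeod \cite{FM1977} and Uchiyama \cite{U1985}: the planar profile $\phi$ evaluated at a curvature-corrected moving coordinate $\xi = |x| - c(t-\tau_i+\tau) + \tfrac{N-1}{c}\ln(t-\tau_i+\tau)+z_i$, plus a slowly decaying temporal correction $\pm\delta\,e^{-\delta \vartheta(t-\tau_i)}$, plus a spatial correction $\pm\delta\,e^{-\mu(|x|-L)}$ localized near the inner sphere $\{|x|=L\}$.

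For the interior supersolution inequality I would set $r=|x|$, $s=t-\tau_1$, $q(s)=\delta e^{-\delta\vartheta(s)}$, $p(r) = \delta e^{-\mu(r-L)}$, and use $\phi'' = -c\phi' - f(\phi)$ together with the radial Laplacian $\Delta = \partial_r^2 + \tfrac{N-1}{r}\partial_r$ to compute
\[
v^+_t-\Delta v^+ - f(v^+) = (N-1)\phi'(\xi)\Bigl(\tfrac{1}{c(s+\tau)}-\tfrac{1}{r}\Bigr) + q'(s) - \mu\bigl(\mu-\tfrac{N-1}{r}\bigr) p(r) + f(\phi(\xi)) - f(v^+).
\]
In the \emph{inner} region $|\xi|\le A$, where $|\phi'(\xi)|\ge m_0>0$, the logarithmic shift embedded in $\xi$ will force $r - c(s+\tau) \asymp -\ln(s+\tau)$ and thereby yield a positive leading term of order $\ln(s+\tau)/(s+\tau)^2$; moreover $r$ will be large in this region (of order $c(s+\tau)$), making $p(r)$ negligible. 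Since $\vartheta(s)=\tfrac{2}{3}(\ln(s+1))^{3/2}$, the correction $q(s)$ decays faster than any power of $s$, so for $\tau$ sufficiently large and $\delta$ sufficiently small the Lipschitz error $\operatorname{Lip}(f)\,q(s)$ is dominated by the leading term for every $s\ge 0$. In the \emph{outer} regions $\xi\ge A$ (resp.\ $\xi\le -A$) where $\phi$ is close to $0$ (resp.\ $1$), the stability $f'(0)<0$ (resp.\ $f'(1)<0$) gives $f(\phi(\xi))-f(v^+) \ge c_0(q+p)$ for some $c_0>0$, dominating the negative term $-\mu^2 p$ once $\mu$ is chosen with $\mu^2 < c_0$.

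For the boundary and initial data, I first note that on the transition zone $0\le x_1\le L\cos\alpha$ of $\partial\Omega$, monotonicity of $h$ together with $h(L\cos\alpha)=L\sin\alpha$ yields $h(x_1)\le L\sin\alpha$, hence $|x|^2 = x_1^2+h(x_1)^2 \le L^2$. Therefore $\partial\Omega\cap\{|x|\ge L\}$ lies entirely on the strict cone $|x'|=x_1\tan\alpha$, whose apex sits at the origin, forcing $\nu\cdot x=0$ and so $\nu\cdot\nabla g=0$ for any radial $g(|x|)$, in particular for $v^\pm$. On the inner sphere $\{|x|=L\}$, choosing $\tau$ large and $z_1$ sufficiently negative makes $\xi(t,x)\le \phi^{-1}(1-\delta)$ for all $t\ge\tau_1$, giving $v^+(t,x)\ge(1-\delta)+\delta=1>u(t,x)$. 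At the initial time $t=\tau_1$, the bound $u(\tau_1,x)\le\phi(x_1-c\tau_1)$ from Proposition~\ref{thm1}, combined with $x_1\le|x|$ and the tail estimates \eqref{phi} of $\phi$, reduces the verification to a further adjustment of $z_1$ and $\tau$. The parabolic comparison principle then yields \eqref{sup-1}. The subsolution $v^-$ is handled analogously; the one new ingredient is that the inner-sphere comparison $v^-\le u$ on $|x|=L$ for $t\ge\tau_2$ will require $u\ge 1-\delta$ there, which the complete propagation hypothesis \eqref{complete} guarantees once $\tau_2$ is taken sufficiently large.

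The hard part will be the simultaneous calibration of $\mu, \delta, \tau, \tau_i, z_i$. The rate $\vartheta(s)=\tfrac{2}{3}(\ln(s+1))^{3/2}$ is tuned precisely so that $e^{-\delta\vartheta(s)}$ decays faster than any polynomial yet remains integrable on $[0,\infty)$: this is the unique regime in which $q(s)$ is simultaneously dominated by the $\ln(s+\tau)/(s+\tau)^2$ gain from spherical curvature in the inner region and is small enough at $s=0$ to allow the initial comparison. For the subsolution, the delicate point is that $\tau_2$ must be large enough that complete propagation provides $u\ge 1-\delta$ near $\{|x|=L\}$, which is exactly the step at which the hypothesis \eqref{complete} plays its decisive role.
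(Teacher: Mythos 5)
Your overall strategy (radial Fife--McLeod/Uchiyama-type super- and subsolutions plus the comparison principle, with the boundary, inner-sphere and initial-time comparisons handled essentially as in the paper) is the right one, but there is a genuine gap in the interior estimate on the transition zone, and it occurs exactly where the Fife--McLeod machinery you invoke ``in spirit'' is actually needed. In the inner region $|\xi|\le A$ you propose to absorb the errors $\operatorname{Lip}(f)\,q(s)+|q'(s)|$, with $q(s)=\delta e^{-\delta\vartheta(s)}$, into the curvature gain $(N-1)|\phi'(\xi)|\,\bigl|\tfrac{1}{c(s+\tau)}-\tfrac1r\bigr|\asymp \ln(s+\tau)/(s+\tau)^2$, claiming this works for all $s\ge0$ once $\tau$ is large and $\delta$ small. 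It does not: although $e^{-\delta\vartheta(s)}$ eventually decays faster than any power of $s$, this only kicks in for $\ln s\gtrsim \delta^{-2}$. At intermediate times, say $s=e^{1/\delta^2}$, one has
$$\delta e^{-\delta\vartheta(s)}\,\frac{(s+\tau)^2}{\ln(s+\tau)}\ \gtrsim\ \delta^{3}e^{4/(3\delta^2)}\ \longrightarrow\ +\infty\quad\hbox{as }\delta\to0^+,$$
and enlarging $\tau$ only increases this ratio. So the polynomial curvature gain cannot dominate the sub-exponentially decaying correction uniformly in $s$; moreover the requirement at $s=0$ (which would force $\delta\lesssim\ln\tau/\tau^2$) is incompatible with what is needed at intermediate $s$. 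A constant-shift ansatz of the exact form appearing in \eqref{sup-1}--\eqref{sub-1} therefore cannot be verified to be a super- or subsolution.

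The paper's proof repairs precisely this point by inserting a bounded, nondecreasing time shift $\varrho(t-\tau_i)$ (see \eqref{Lambda}--\eqref{varrho-1}) into the argument of $\phi$, i.e.\ $\overline\xi=\zeta-\varrho$ and $\underline\xi=\zeta+\varrho$, with $\varrho'(s)=\Lambda(s)+\sigma\delta\bigl(e^{-\delta\vartheta(s)}+e^{-\delta s}\bigr)$. On the transition zone, where $\phi'\le-\kappa<0$ by \eqref{kappa-1}, the resulting term $-\varrho'\,\phi'(\overline\xi)\ge\kappa\sigma\delta\bigl(e^{-\delta\vartheta(s)}+e^{-\delta s}\bigr)$ is a positive contribution of exactly the same order as the errors, and choosing $\sigma$ with $\kappa\sigma\ge\max_{[0,1]}|f'|+\mu^2$ as in \eqref{kappa-sigma} closes the estimate; the $e^{-\delta s}$ part also absorbs the spatial correction there via $|x|-L\ge c s/2+O(1)$ and $\delta<\mu c/2$. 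The curvature discrepancy is not used as a gain at all: it is simply dominated through $\Lambda$, which is why $\varrho'$ contains it. The total shift $\varrho(+\infty)$ is finite precisely because $e^{-\delta\vartheta}$ and $e^{-\delta s}$ are integrable (this, rather than domination by $\ln(s+\tau)/(s+\tau)^2$, is the true role of the condition $\int_0^{+\infty}e^{-r\vartheta(t)}dt<+\infty$), and it is only at the very end, using the monotonicity of $\phi$, that the time-dependent shift is replaced by the constants $z_1,z_2$ of the statement. You should modify your ansatz accordingly before running the comparison principle.
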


\begin{proof}
\textit{Step 1: choice of some parameters.} Choose first $\mu>0$ and then $\delta\in(0,1/2)$ such that
\begin{equation}
\label{mu-1}
0<\mu<\sqrt{\min\Big(\frac{|f'(0)|}{2},\frac{|f'(1)|}{2}\Big)},
\end{equation}
and 
\begin{equation}
\label{delta-1}
0<\delta<\min\Big(\frac{\mu c}{2}, \frac{\mu_*}{2},\frac{\mu^*}{2},\frac{\mu^2}{2}\Big),\ \ f'\le\frac{f'(0)}{2}\ \text{in}\ [0,3\delta],\ \ f'\le \frac{f'(1)}{2}\ \text{in}\ [1-3\delta,1],	
\end{equation}
with $\mu_*>0$ and $\mu^*>0$ as in~\eqref{phi}. From~\eqref{TW}-\eqref{phi'}, there are $C>0$ and $K>0$ such that 
\begin{equation}
\label{phi-1}
\phi\ge 1-\delta\ \text{in}\ (-\infty,-C],\ \phi\le \delta \ \text{in}\  [C,+\infty),\ |\phi'(z)|\le K\min\big(e^{\mu_*z/2}, e^{-\mu^*z/2}\big)\ \text{for all}\ z\in \mathbb{R}.
\end{equation}
Since $\phi'$ is continuous and negative in $\mathbb{R}$, there exists a constant  $\kappa>0$ such that 
\begin{equation}
\label{kappa-1}
\phi'\le -\kappa \ \text{in}\ [-C,C].
\end{equation}
We then choose $\sigma>0$ such that
\begin{equation}
\label{kappa-sigma}
\max_{[0,1]}|f'|+\mu^2\le \kappa \sigma.
\end{equation}
Let then $\tau_0>0$ be such that
\begin{equation}
\label{tau}
\frac{N-1}{c}\ln t\le \frac{c}{2}\,t~~\text{for all}~t\ge \tau_0,
\end{equation}
and $\eta>0$ such that
\be\label{defeta}
e^{-\mu_*\eta/2}\le\frac{L}{c\tau_0}.
\ee
From~\eqref{delta-1}, there exist some constants $M_1, M_2\ge C$ such that
\begin{equation}
\label{A-inf-1}
\max\Big(\frac{(N-1)Ke^{-\mu_*(M_1+\vartheta(t))/2}}{c\tau_0},\frac{(N-1)K e^{-\mu^*(M_2+\vartheta(t))/2}}{L}\Big)\le\delta^2 e^{-\delta\vartheta(t)}~~\text{for all}~ t\ge 0
\end{equation}
Now define
$$\omega=\int_0^{+\infty}\sigma \delta( e^{-\delta\vartheta(s)}+ e^{-\delta s})\,ds\ \in(0,+\infty),$$
and
$$M=\max\big(M_1+\omega+1+\eta,M_2+\omega+1\big)>0\ \hbox{ and }\ B=2C+\omega+1>0.$$
For every $\tau\ge\tau_0$,~\eqref{tau} implies that $c(t+\tau)-((N-1)/c)\ln(t+\tau)+L+B-C+M+\vartheta(t)\ge L$ for all $t\ge0$, hence the function $\Lambda$ defined in $[0,+\infty)$ by
\begin{equation}\label{Lambda}
\Lambda(t)=\sup_{\substack{\big||x|-c (t+\tau)+((N-1)/c)\ln (t+\tau)-L-B+C\big|\le M+\vartheta(t)\\ x\in\overline{\Omega^+},~ |x|\ge L}} \bigg|\frac{N-1}{|x|}-\frac{N-1}{c(t+\tau)}\bigg|\,\ge0
\end{equation}
is well defined, nonnegative and continuous in $[0,+\infty)$. Furthermore, it is easy to see that it is integrable over $[0,+\infty)$, and that
$$\lim_{\tau\to+\infty}\int_0^{+\infty}\!\!\Lambda(t)\,dt\,=\,0.$$
Let us then fix $\tau\ge\max(\tau_0,L/c)$ large enough so that $\int_0^{+\infty}\Lambda(t)\,dt<1$ and let us introduce a nonnegative function $\varrho$ defined in $[0,+\infty)$ by
\begin{equation}
\label{varrho-1}
\varrho(t)=\int_0^t\big(\Lambda(s)+\sigma \delta( e^{-\delta\vartheta(s)}+ e^{-\delta s})\big)\,ds\ \ge0.
\end{equation} 
One then has $0<\varrho(+\infty)<1+\omega$. Hence,
$$M\ge\max\big(M_1+\varrho(+\infty)+\eta,M_2+\varrho(+\infty)\big)\ge C+\varrho(+\infty)\ \hbox{ and }\ B\ge2C+\varrho(+\infty)$$
and, from~\eqref{defeta}-\eqref{A-inf-1} and the inequality $c\tau\ge L$, there holds
\begin{equation}
\label{A-inf-2}
\!\!\max\!\Big(\!\frac{(N\!-\!1)K e^{-\mu_*(M\!+\!\vartheta(t)\!-\!\varrho(+\infty))/2}}{L},\frac{(N\!-\!1)Ke^{-\mu^*(M\!+\!\vartheta(t)\!-\!\varrho(+\infty))/2}}{c\tau}\!\Big)\!\le\!\delta^2 e^{-\delta\vartheta(t)}~ \text{for all}~ t\!\ge\!0.
\end{equation}
For notational convenience, let us finally define, for $s\ge0$ and $x\in\overline{\Omega^+}$ with $|x|\ge L$,
$$A(s,x)=\frac{N-1}{|x|}-\frac{N-1}{c(s+\tau)}$$
and
$$\zeta(s,x)=|x|-c(s+\tau)+\frac{N-1}{c}\ln(s+\tau)-L-B+C.$$

\vskip 0.3cm
\noindent\textit{Step 2: proof of \eqref{sup-1}.} Since $u(t,x)-\phi(x_1-ct)\to 0$ as $t\to-\infty$ uniformly in $\overline\Omega$, and since~$\phi(+\infty)=0$, there exists~$\tau_1<0$ such that $\phi(-c\tau_1)\le \delta/2$ and
\begin{equation}
\label{super-t1-1}
u(\tau_1,x)\le \phi(x_1-c\tau_1)+\frac{\delta}{2}\le\phi(-c\tau_1)+\frac{\delta}{2}\le \delta
\end{equation} 
for all $x\in\overline{\Omega^+}$. For $t\ge \tau_1$ and $x\in\overline{\Omega^+}$ with $|x|\ge L$, let us set
\begin{equation*}
\overline u(t,x)=\min\big(\phi(\overline\xi(t,x))+\delta e^{-\delta\vartheta(t-\tau_1)}+\delta e^{-\mu(|x|-L)}, 1\big),
\end{equation*}
where 
\begin{equation*}
\overline\xi(t,x)=\zeta(t-\tau_1,x)-\varrho(t-\tau_1)=|x|-c(t-\tau_1+\tau)+\frac{N-1}{c}\ln (t-\tau_1+\tau)-L-B+C-\varrho(t-\tau_1).
\end{equation*}
Let us now check that $\overline u(t,x)$ is a supersolution of the problem satisfied by $u(t,x)$ for $t\ge \tau_1$ and $x\in\overline{\Omega^+}$ with $|x|\ge L$.

We first verify the initial and boundary conditions. On the one hand, at time $t=\tau_1$,  from~\eqref{super-t1-1} it follows that $\overline u(\tau_1,x)\ge \delta\ge u(\tau_1,x)$ for all $x\in\overline{\Omega^+}$ with $|x|\ge L$. On the other hand, for $t\ge \tau_1$ and for all $x\in\overline{\Omega^+}$ with $|x|=L$, one infers from \eqref{tau}, \eqref{varrho-1} and the choice of~$B=2C+\omega+1\ge 2C$, that $\overline \xi(t,x)\le -B+C\le -C$, hence~\eqref{phi-1} gives $\phi(\overline\xi(t,x))\ge 1-\delta$, which yields $\overline u(t,x)\ge\min(1-\delta+\delta e^{-\delta\vartheta(t-\tau_1)}+\delta,1)=1>u(t,x)$. Lastly, owing to~\eqref{defOmega}-\eqref{h}, one has $\nu(x)\cdot \nabla\overline u(t,x)=0$ for every $t\ge \tau_1$ and $x\in\partial\Omega^+$ such that $|x|>L$ and $\overline u(t,x)<1$, since $\nu(x)\cdot x/|x|=0$ at any such~$x$.
	
Next, let us check that
$$\mathcal{L}\overline u(t,x)=\overline u_t(t,x)-\Delta\overline u(t,x)-f(\overline u(t,x))\ge 0$$
for all $t\ge \tau_1$ and $x\in\overline{\Omega^+}$ such that $|x|\ge L$ and $\overline u(t,x)<1$. After a straightforward computation, we get, for such a $(t,x)$,
$$\baa{rcl}
\mathcal{L}\overline u(t,x) & = & f(\phi(\overline \xi(t,x)))-f(\overline u(t,x))-\delta^2 \vartheta'(t-\tau_1) e^{-\delta\vartheta(t-\tau_1)}-\mu^2\delta e^{-\mu(|x|-L)}\vspace{3pt}\\
& & \displaystyle+\frac{N-1}{|x|}\mu\delta e^{-\mu(|x|-L)}-\bigg(\varrho'(t-\tau_1)+\underbrace{ \frac{N-1}{|x|}-\frac{N-1}{c(t-\tau_1+\tau)}}_{=A(t-\tau_1,x)\ge-\frac{N-1}{c\tau}\ge-\frac{N-1}{c\tau_0}}\bigg)\phi'(\overline\xi(t,x)).\eaa$$
Three cases can occur, namely: either $\zeta(t-\tau_1,x)<-M-\vartheta(t-\tau_1)$, or $\zeta(t-\tau_1,x)>M+\vartheta(t-\tau_1)$, or $|\zeta(t-\tau_1,x)|\le M+\vartheta(t-\tau_1)$.

Consider firstly the case
$$\zeta(t-\tau_1,x)<-M-\vartheta(t-\tau_1).$$
One then has $\overline\xi(t,x)\le\zeta(t-\tau_1,x)<-M-\vartheta(t-\tau_1)<-M_1-\vartheta(t-\tau_1)\le -C$, hence $1>\phi(\overline\xi(t,x))\ge 1-\delta$ and $\overline u(t,x)\ge 1-\delta$ (remember also that $(t,x)$ is assumed to be such that~$1>\overline u(t,x)$). By~\eqref{delta-1} one gets that
$$f(\phi(\overline\xi(t,x)))-f(\overline u(t,x))\ge-\frac{f'(1)}{2}\,(\delta e^{-\delta\vartheta(t-\tau_1)}+\delta e^{-\mu(|x|-L)}).$$
Notice also from~\eqref{phi-1} that $0<-\phi'(\overline\xi(t,x))\le K e^{\mu_*\overline\xi(t,x)/2}\le K e^{\mu_*(-M_1-\vartheta(t-\tau_1))/2}$, which yields
$$-A(t-\tau_1,x)\,\phi'(\overline\xi(t,x))\ge-\frac{N-1}{c\tau_0}\,K e^{\mu_*(-M_1-\vartheta(t-\tau_1))/2}\ge-\delta^2 e^{-\delta\vartheta(t-\tau_1)}$$
thanks to~\eqref{A-inf-1}. Hence, it follows from \eqref{vartheta-1}, \eqref{mu-1}-\eqref{delta-1}, \eqref{varrho-1}, as well as the negativity of $\phi'$ and $f'(1)$, that
\begin{align*}
\mathcal{L}\overline u(t,x)\ge& \ -\frac{f'(1)}{2}(\delta e^{-\delta\vartheta(t-\tau_1)}+\delta e^{-\mu(|x|-L)})-\delta^2\vartheta'(t-\tau_1) e^{-\delta\vartheta(t-\tau_1)}-\mu^2\delta e^{-\mu(|x|-L)}\\
&\ +\frac{N-1}{|x|}\mu\delta e^{-\mu(|x|-L)}-\varrho'(t-\tau_1)\phi'(\overline\xi(t,x))-\delta^2 e^{-\delta\vartheta(t-\tau_1)}\\
\ge & \ \Big(\!-\frac{f'(1)}{2}-\delta\vartheta'(t-\tau_1)-\delta\Big)\delta e^{-\delta\vartheta(t-\tau_1)}+\Big(\!-\frac{f'(1)}{2}-\mu^2\Big)\delta e^{-\mu(|x|-L)}\ge 0.
\end{align*}

Consider secondly the case
$$\zeta(t-\tau_1,x)>M+\vartheta(t-\tau_1).$$
One then has $\overline\xi(t,x)>M\!+\!\vartheta(t\!-\!\tau_1)\!-\!\varrho(+\infty)\ge C$, hence $0<\phi(\overline\xi(t,x))\le\delta$ and $0<\overline u(t,x)\le 3\delta$. From~\eqref{delta-1} one gets that $f(\phi(\overline\xi(t,x)))-f(\overline u(t,x))\ge -(f'(0)/2)(\delta e^{-\delta\vartheta(t-\tau_1)}\!+\!\delta e^{-\mu(|x|-L)})$. By noticing that $0<-\phi'(\overline\xi(t,x))\le K e^{-\mu^*\overline\xi(t,x)/2}\le K e^{-\mu^*(M+\vartheta(t-\tau_1)-\varrho(+\infty))/2}$ from~\eqref{phi-1}, one gets that
$$-A(t-\tau_1)\,\phi'(\overline\xi(t,x))\ge-\frac{N-1}{c\tau}\,K e^{-\mu^*(M+\vartheta(t-\tau_1)-\varrho(+\infty))/2}\ge -\delta^2 e^{-\delta\vartheta(t-\tau_1)},$$
from~\eqref{A-inf-2}. It then follows from \eqref{vartheta-1}, \eqref{mu-1}-\eqref{delta-1}, \eqref{varrho-1}, as well as the negativity of $\phi'$ and~$f'(0)$, that
\begin{align*}
\mathcal{L}\overline u(t,x)\ge& \ -\frac{f'(0)}{2}(\delta e^{-\delta\vartheta(t-\tau_1)}+\delta e^{-\mu(|x|-L)})-\delta^2 \vartheta'(t-\tau_1) e^{-\delta\vartheta(t-\tau_1)}-\mu^2\delta e^{-\mu(|x|-L)}\\
&\ +\frac{N-1}{|x|}\mu\delta e^{-\mu(|x|-L)}-\varrho'(t-\tau_1)\phi'(\overline\xi(t,x))-\delta^2 e^{-\delta\vartheta(t-\tau_1)}\\
\ge& \ \Big(\!-\frac{f'(0)}{2}-\delta\vartheta'(t-\tau_1)-\delta\Big)\delta e^{-\delta\vartheta(t-\tau_1)}+\Big(\!-\frac{f'(0)}{2}-\mu^2\Big)\delta e^{-\mu(|x|-L)}\ge 0.    		
\end{align*}

Lastly, we consider the case
$$|\zeta(t-\tau_1,x)|\le M+\vartheta(t-\tau_1).$$
One observes from the definitions of $\Lambda$ and $\varrho$ in~\eqref{Lambda}-\eqref{varrho-1} that, in this range, there holds
\begin{equation}
\label{A-1}
\varrho'(t-\tau_1)+A(t-\tau_1,x)\ge\sigma\delta\big( e^{-\delta\vartheta(t-\tau_1)}+e^{-\delta(t-\tau_1)}\big)>0.
\end{equation} 
Three subcases may then occur. If $-C\le\overline \xi(t,x)\le C$, then $-\phi'(\overline\xi(t,x))\ge \kappa>0$ by~\eqref{kappa-1} and $f(\phi(\overline\xi(t,x)))-f(\overline u(t,x))\ge -\big(\max_{[0,1]}|f'|\big)(\delta e^{-\delta\vartheta(t-\tau_1)}+\delta e^{-\mu(|x|-L)})$. Moreover, from the expression of $\overline\xi(t,x)$ and \eqref{tau}, one obtains
\begin{equation*}
|x|-L\ge c(t-\tau_1+\tau)-\frac{N-1}{c}\ln (t-\tau_1+\tau)+B-2C\ge \frac{c}{2}(t-\tau_1+\tau)+B-2C>\frac{c}{2}(t-\tau_1)+B-2C.
\end{equation*}
This reveals that $e^{-\mu(|x|-L)}\le e^{-\mu(c(t-\tau_1)/2+B-2C)}$. Therefore, since $B>2C$, and by virtue of~\eqref{vartheta-1},~\eqref{delta-1},~\eqref{kappa-1}-\eqref{kappa-sigma} and~\eqref{A-1}, one gets that
\begin{align*}
\mathcal{L}\overline u(t,x)\ge& \ -\Big(\max_{[0,1]}|f'|\Big)\,(\delta e^{-\delta\vartheta(t-\tau_1)}+\delta e^{-\mu(|x|-L)})-\delta^2 \vartheta'(t-\tau_1) e^{-\delta\vartheta(t-\tau_1)}-\mu^2\delta e^{-\mu(|x|-L)}\\
&\ +\frac{N-1}{|x|}\mu\delta e^{-\mu(|x|-L)}+\kappa\sigma \delta( e^{-\delta\vartheta(t-\tau_1)}+ e^{-\delta(t-\tau_1)})\\
\ge& \ \Big(\!-\max_{[0,1]}|f'|-\delta\vartheta'(t-\tau_1)+\kappa \sigma\Big)\delta e^{-\delta\vartheta(t-\tau_1)}+\kappa \sigma\delta e^{-\delta(t-\tau_1)}\\
&\ -\Big(\max_{[0,1]}|f'|+\mu^2\Big)\delta e^{-\mu(c(t-\tau_1)/2+B-2C)}\\
\ge& \ \Big[\kappa \sigma-\Big(\max_{[0,1]}|f'|+\mu^2\Big) e^{-\mu(B-2C)}\Big]\delta e^{-\mu c(t-\tau_1)/2}\ge0.
\end{align*}
If $\overline\xi(t,x)\ge C$, one has $0<\phi(\overline \xi(t,x))\le \delta$ and then $0<\overline u(t,x)\le 3\delta$. Due to \eqref{vartheta-1}, \eqref{mu-1}-\eqref{delta-1}, as well as~\eqref{A-1}, an analogous argument as above leads to 
\begin{align*}
\mathcal{L}\overline u(t,x)\ge\Big(\!-\frac{f'(0)}{2}-\delta\vartheta'(t-\tau_1)\Big)\delta e^{-\delta\vartheta(t-\tau_1)}+\Big(\!-\frac{f'(0)}{2}-\mu^2\Big)\delta e^{-\mu(|x|-L)}\ge 0.    		
\end{align*}
If $\overline \xi(t,x)\le -C$, it follows that 
$1>\phi(\overline\xi(t,x))\ge 1-\delta$ and then $\overline u(t,x)\ge 1-\delta$ (remember also that $(t,x)$ is assumed to be such that $1>\overline u(t,x)$). Finally, one infers from~\eqref{vartheta-1}, \eqref{mu-1}-\eqref{delta-1} as well as~\eqref{A-1} that
\begin{align*}
\mathcal{L}\overline u(t,x)
\ge\Big(\!-\frac{f'(1)}{2}-\delta\vartheta'(t-\tau_1)\Big)\delta e^{-\delta\vartheta(t-\tau_1)}+\Big(\!-\frac{f'(1)}{2}-\mu^2\Big)\delta e^{-\mu(|x|-L)}\ge 0.    		
\end{align*}

As a consequence, we conclude that $\mathcal{L}\overline u(t,x)=\overline u_t(t,x)-\Delta\overline u(t,x)-f(\overline u(t,x))\ge 0$ for all~$t\ge \tau_1$ and $x\in\overline{\Omega^+}$ such that $|x|\ge L$ and $\overline u(t,x)<1$. Since $f(1)=0$ and $u<1$ in $\R\times\overline{\Omega}$, the maximum principle then implies that 
$$\baa{r}
\displaystyle u(t,x)\le\overline{u}(t,x)\le\phi\Big(|x|-c(t-\tau_1+\tau)+\frac{N-1}{c}\ln (t-\tau_1+\tau)-L-B+C-\varrho(t-\tau_1)\Big)\vspace{3pt}\\
+\delta e^{-\delta\vartheta(t-\tau_1)}+\delta e^{-\mu(|x|-L)}\eaa$$
for all $t\ge \tau_1$ and $x\in\overline{\Omega^+}$ such that $|x|\ge L$. Finally, since $\phi$ is decreasing,~\eqref{sup-1} holds by taking~$z_1=-L-B+C-\varrho(+\infty)$ .

\vskip 0.3cm
\noindent\textit{Step 3: proof of \eqref{sub-1}.}
Since $u(t,\cdot)\to 1$ as $t\to +\infty$ locally uniformly in $\overline\Omega$ by the complete propagation condition \eqref{complete}, there exists $\tau_2>0$ such that 
\begin{equation}
\label{sup-t2-1}
u(t,x)\ge 1-\delta \ \text{for all} \ t\ge \tau_2\ \text{and}\ x\in\overline{\Omega^+} \ \text{with} \ L\le|x|\le L+B+c\tau-\frac{N-1}{c}\ln\tau.
\end{equation}
 For $t\ge \tau_2$ and $x\in\overline{\Omega^+}$ with $|x|\ge L$, let us set
\begin{equation*}
\underline u(t,x)=\max\big(\phi(\underline\xi(t,x))-\delta e^{-\delta\vartheta(t-\tau_2)}-\delta e^{-\mu(|x|-L)}, 0\big),
\end{equation*}
where 
\begin{equation*}
\underline\xi(t,x)=\zeta(t-\tau_2,x)+\varrho(t-\tau_2)=|x|-c(t-\tau_2+\tau)+\frac{N-1}{c}\ln (t-\tau_2+\tau)-L-B+C+\varrho(t-\tau_2).
\end{equation*}
Let us now check that $\underline u(t,x)$ is a subsolution of the problem satisfied by $u(t,x)$ for $t\ge \tau_2$ and~$x\in\overline{\Omega^+}$ with $|x|\ge L$.

Let us first check the initial and boundary conditions.  At time $t=\tau_2$, on the one hand, it follows  from~\eqref{sup-t2-1} that, for every $x\in\overline{\Omega^+}$ with $L\le |x|\le L+B+c\tau-((N-1)/c)\ln\tau$, there holds
$$u(\tau_2,x)\ge 1-\delta\ge 1-\delta-\delta e^{-\mu(|x|-L)}\ge \underline u(\tau_2,x).$$
On the other hand, for every $x\in\overline{\Omega^+}$ such that $|x|\ge L+B+c\tau-((N-1)/c)\ln\tau$, one has
$$\underline\xi(\tau_2,x)=|x|-c\tau+((N-1)/c)\ln\tau-L-B+C\ge C,$$
and it then follows from~\eqref{phi-1} that $\underline u(\tau_2,x)\le \max(\delta-\delta-\delta e^{-\mu(|x|-L)},0)=0< u(\tau_2,x)$. Therefore, $\underline u(\tau_2,x)\le u(\tau_2,x)$ for all $x\in\overline{\Omega^+}$ with~$|x|\ge L$. Next, for $t\ge \tau_2$ and $x\in\overline{\Omega^+}$ with~$|x|=L$, one has $\underline u(t,x)\le 1-\delta e^{-\delta\vartheta(t-\tau_2)}-\delta<1-\delta\le u(t,x)$ due to \eqref{sup-t2-1}. Moreover, it can be easily deduced that $\nu(x)\cdot\nabla \underline u(t,x)=0$ for every $t\ge \tau_2$ and $x\in\partial\Omega^+$ such that $|x|>L$ and $\underline u(t,x)>0$.

Let us now check that $\mathcal{L}\underline u(t,x)=\underline u_t(t,x)-\Delta\underline u(t,x)-f(\underline u(t,x))\le 0$ for all $t\ge \tau_2$ and~$x\in\overline{\Omega^+}$ such that $|x|\ge L$ and $\underline u(t,x)>0$. A straightforward computation shows that, for such a~$(t,x)$,
\begin{align*}
\mathcal{L}\underline u(t,x)= &\ f(\phi(\underline\xi(t,x)))-f(\underline u(t,x))+\delta^2 \vartheta'(t-\tau_2) e^{-\delta\vartheta(t-\tau_2)}+\mu^2\delta e^{-\mu(|x|-L)}\\
&\ -\frac{N-1}{|x|}\mu\delta e^{-\mu(|x|-L)}+\bigg(\varrho'(t-\tau_2)-\Big(\underbrace{\frac{N-1}{|x|}-\frac{N-1}{c(t-\tau_2+\tau)}}_{=A(t-\tau_2,x)\le(N-1)/L}\Big) \bigg)\phi'(\underline\xi(t,x)).
\end{align*}
As in Step~ 2, three cases can occur, namely: either $\zeta(t-\tau_2,x)>M+\vartheta(t-\tau_2)$, or $\zeta(t-\tau_2,x)<-M-\vartheta(t-\tau_2)$, or $|\zeta(t-\tau_2,x)|\le M+\vartheta(t-\tau_2)$.

Consider firstly the case
$$\zeta(t-\tau_2,x)>M+\vartheta(t-\tau_2).$$
One then has $\underline\xi(t,x)\ge \zeta(t-\tau_2,x)\!>\!M+\vartheta(t-\tau_2)\!>\!M_2+\vartheta(t-\tau_2)\ge C$. Hence, $0<\phi(\underline \xi(t,x))\le \delta$ and then $\underline u(t,x)\le \delta$ (remember also that $(t,x)$ is assumed to be such that $0<\underline{u}(t,x)$). One deduces from \eqref{delta-1} that $f(\phi(\underline \xi(t,x)))-f(\underline u(t,x))\le (f'(0)/2)(\delta e^{-\delta\vartheta(t-\tau_2)}+\delta e^{-\mu(|x|-L)})$. Moreover,  by virtue of \eqref{phi-1} and \eqref{A-inf-1} one has
$$-A(t-\tau_2,x)\,\phi'(\underline\xi(t,x))\le\frac{N-1}{L}\,Ke^{-\mu^*\underline\xi(t,x)/2}\le\frac{N-1}{L}\, Ke^{-\mu^*(M_2+\vartheta(t-\tau_2))/2}\le \delta^2 e^{-\delta\vartheta(t-\tau_2)}.$$
Therefore, it follows from \eqref{vartheta-1}, \eqref{mu-1}-\eqref{delta-1}, \eqref{varrho-1}, as well as the negativity of $\phi'$ and $f'(0)$, that 
\begin{align*}
\mathcal{L}\underline u(t,x)\le&\ \frac{f'(0)}{2}(\delta e^{-\delta\vartheta(t-\tau_2)}+\delta e^{-\mu(|x|-L)})+\delta^2 \vartheta'(t-\tau_2) e^{-\delta\vartheta(t-\tau_2)}+\mu^2\delta e^{-\mu(|x|-L)}\\
&\ -\frac{N-1}{|x|}\mu\delta e^{-\mu(|x|-L)}+\varrho'(t-\tau_2)\phi'(\underline \xi(t,x))+\delta^2  e^{-\delta\vartheta(t-\tau_2)}\\
\le &\ \Big(\frac{f'(0)}{2}+\delta\vartheta'(t-\tau_2)+\delta\Big)\delta e^{-\delta\vartheta(t-\tau_2)}+\Big(\frac{f'(0)}{2}+\mu^2\Big)\delta e^{-\mu(|x|-L)}\le 0.
\end{align*}

Consider secondly the case
$$\zeta(t-\tau_2,x)<-M-\vartheta(t-\tau_2).$$
One then has $\underline\xi(t,x)<-M-\vartheta(t-\tau_2)+\varrho(+\infty)\le -C$, which implies $1>\phi(\underline\xi(t,x))\ge 1-\delta$ and then $1>\underline u(t,x)\ge 1-3\delta$. By \eqref{delta-1} there holds $f(\phi(\underline \xi(t,x)))-f(\underline u(t,x))\le (f'(1)/2)(\delta e^{-\delta\vartheta(t-\tau_2)}+\delta e^{-\mu(|x|-L)})$. One also infers from~\eqref{phi-1} and~\eqref{A-inf-2} that
$$-A(t-\tau_2,x)\,\phi'(\underline\xi(t,x))\le\frac{N-1}{L}\,Ke^{\mu_*\underline\xi(t,x)/2}\le\frac{N-1}{L}\, Ke^{\mu_*(-M-\vartheta(t-\tau_2)+\varrho(+\infty))/2}\le \delta^2 e^{-\delta\vartheta(t-\tau_2)}.$$
It then follows from \eqref{vartheta-1}, \eqref{mu-1}-\eqref{delta-1}, \eqref{varrho-1}, as well as the negativity of $\phi'$ and $f'(1)$, that 
$$\baa{rcl}
\mathcal{L}\underline u(t,x) & \le & \displaystyle\frac{f'(1)}{2}(\delta e^{-\delta\vartheta(t-\tau_2)}+\delta e^{-\mu(|x|-L)})+\delta^2 \vartheta'(t-\tau_2) e^{-\delta\vartheta(t-\tau_2)}+\mu^2\delta e^{-\mu(|x|-L)}\vspace{3pt}\\
& & \displaystyle-\frac{N-1}{|x|}\mu\delta e^{-\mu(|x|-L)}+\varrho'(t-\tau_2)\phi'(\underline \xi(t,x))+\delta^2 e^{-\delta\vartheta(t-\tau_2)}\vspace{3pt}\\
& \le & \displaystyle\Big(\frac{f'(1)}{2}+\delta\vartheta'(t-\tau_2)+\delta\Big)\delta e^{-\delta \vartheta(t-\tau_2)}+\Big(\frac{f'(1)}{2}+\mu^2\Big)\delta e^{-\mu(|x|-L)}\le 0.\eaa$$

Eventually, let us consider the case that
$$|\zeta(t-\tau_2,x)|\le M+\vartheta(t-\tau_2).$$
One then observes from the definitions of $\Lambda$ and $\varrho$ in~\eqref{Lambda}-\eqref{varrho-1} that in this range there holds
\begin{equation}
\label{A-2}
\varrho'(t-\tau_2)-A(t-\tau_2,x)\ge \sigma\delta ( e^{-\delta\vartheta(t-\tau_2)}+e^{-\delta(t-\tau_2)})>0.
\end{equation} 
 Similarly as the preceding step, three subcases may occur. If $-C\le\underline\xi(t,x)\le C$, one then has
$$\baa{rcl}
\displaystyle |x|-L & \ge & \displaystyle c(t-\tau_2+\tau)-\frac{N-1}{c}\ln(t-\tau_2+\tau)+B-2C-\varrho(t-\tau_2)\\
& \ge & \displaystyle\frac{c}{2}(t-\tau_2+\tau)+B-2C-\varrho(+\infty)\ge\frac{c}{2}(t-\tau_2)+B-2C-\varrho(+\infty)\eaa$$
thanks to \eqref{tau}, whence $e^{-\mu(|x|-L)}\le e^{-\mu(c(t-\tau_2)/2+B-2C-\varrho(+\infty))}$. Moreover, $\phi'(\underline\xi(t,x))\le -\kappa<0$ and $f(\phi(\underline \xi(t,x)))-f(\underline u(t,x))\le \big(\max_{[0,1]}|f'|\big)(\delta e^{-\delta\vartheta(t-\tau_2)}+\delta e^{-\mu(|x|-L)})$. Therefore, since $B\ge 2C+\varrho(+\infty)$, one infers from~\eqref{vartheta-1},~\eqref{delta-1},~\eqref{kappa-sigma} and~\eqref{A-2} that 
\begin{align*}
\mathcal{L}\underline u(t,x)\le& \ \Big(\max_{[0,1]}|f'|\Big)\,(\delta e^{-\delta\vartheta(t-\tau_2)}+\delta e^{-\mu(|x|-L)})+\delta^2\vartheta'(t-\tau_2) e^{-\delta\vartheta(t-\tau_2)} +\mu^2\delta e^{-\mu(|x|-L)}\\
&\ -\frac{N-1}{|x|}\mu\delta e^{-\mu(|x|-L)}-\kappa\sigma\delta( e^{-\delta\vartheta(t-\tau_2)}+ e^{-\delta(t-\tau_2)})\\
\le& \ \Big(\max_{[0,1]}|f'|+\delta\vartheta'(t-\tau_2)-\kappa \sigma\Big)\delta e^{-\delta\vartheta(t-\tau_2)}+\!\Big(\!\max_{[0,1]}|f'|+\mu^2\Big)\delta e^{-\mu(|x|-L)}\!-\kappa \sigma\delta e^{-\delta(t-\tau_2)}\\
\le&\ \Big(\max_{[0,1]}|f'|+\mu^2\Big)\delta e^{-\mu(c(t-\tau_2)/2+B-2C-\varrho(+\infty))}-\kappa \sigma\delta e^{-\delta(t-\tau_2)}\\
\le& \ \Big[\Big(\max_{[0,1]}|f'|+\mu^2\Big)e^{-\mu(B-2C-\varrho(+\infty))}-\kappa \sigma\Big]\delta e^{-\delta(t-\tau_2)}\le 0.
\end{align*}
If $\underline \xi(t,x)\ge C$, one has $0<\phi(\underline\xi(t,x))\le \delta$ and $0<\underline u(t,x)\le \delta$. Moreover, one infers from~\eqref{delta-1} that $f(\phi(\underline \xi(t,x)))-f(\underline u(t,x))\le (f'(0)/2)(\delta e^{-\delta\vartheta(t-\tau_2)}+\delta e^{-\mu(|x|-L)})$. Therefore it follows from~\eqref{vartheta-1},~\eqref{mu-1}-\eqref{delta-1} and~\eqref{A-2}, as well as the negativity of $\phi'$ and $f'(0)$, that 
\begin{align*}
\mathcal{L}\underline u(t,x)\le& \ \frac{f'(0)}{2}(\delta e^{-\delta\vartheta(t-\tau_2)}+\delta e^{-\mu(|x|-L)})+\delta^2 \vartheta'(t-\tau_2) e^{-\delta\vartheta (t-\tau_2)}+\mu^2\delta e^{-\mu(|x|-L)}\\
\le &\ \Big(\frac{f'(0)}{2}+\delta\vartheta'(t-\tau_2)\Big)\delta e^{-\delta\vartheta (t-\tau_2)}+\Big(\frac{f'(0)}{2}+\mu^2\Big)\delta e^{-\mu(|x|-L)}\le 0.
\end{align*}
If $\underline \xi(t,x)\le -C$, one has $1>\phi(\underline\xi(t,x))\ge 1-\delta$ and then $1>\underline u(t,x)\ge 1-3\delta$. By virtue of~\eqref{vartheta-1},~\eqref{mu-1}-\eqref{delta-1} and~\eqref{A-2}, as well as the negativity of $\phi'$ and $f'(1)$, one gets that
\begin{align*}
\mathcal{L}\underline u(t,x)\le\Big(\frac{f'(1)}{2}+\delta\vartheta'(t-\tau_2)\Big)\delta e^{-\delta\vartheta(t-\tau_2)}+\Big(\frac{f'(1)}{2}+\mu^2\Big)\delta e^{-\mu(|x|-L)}\le 0.   		
\end{align*}

Consequently, we conclude that $\mathcal{L}\underline u(t,x)=\underline u_t(t,x)-\Delta\underline u(t,x)-f(\underline u(t,x))\le 0$ for all $t\ge \tau_2$ and $x\in\overline{\Omega^+}$ such that $|x|\ge L$ and $\underline u(t,x)>0$. Since $f(0)=0$ and $u>0$ in $\R\times\overline{\Omega}$, the maximum principle then implies that
$$\baa{r}
\displaystyle u(t,x)\ge\underline{u}(t,x)\ge\phi\Big(|x|-c(t-\tau_2+\tau)+\frac{N-1}{c}\ln (t-\tau_2+\tau)-L-B+C+\varrho(t-\tau_2)\Big)\vspace{3pt}\\
-\delta e^{-\delta\vartheta(t-\tau_2)}-\delta e^{-\mu(|x|-L)}\eaa$$
for all $t\ge \tau_2$ and $x\in\overline{\Omega^+}$ such that $|x|\ge L$. Choosing $z_2=-L$, property~\eqref{sub-1} then follows from the fact that $B\ge2C+\varrho(+\infty)\ge C+\varrho(+\infty)$ and the negativity of $\phi'$. The proof of Lemma~\ref{lemma-super sub} is thereby complete.
\end{proof}

\subsubsection*{Proof of Theorem~\ref{thm3}}

Let $\Omega$ be a funnel-shaped domain satisfying~\eqref{defOmega}-\eqref{h} with $\alpha>0$, and let $u$ be the solution of~\eqref{1} with past condition~\eqref{initial}, given in Proposition~\ref{thm1}. One assumes that $u$ propagates completely in the sense of~\eqref{complete}. First of all, we recall from~\eqref{unif1t} that, for every $\tau\in\R$, $u(t,x)\to1$ as $x_1\to-\infty$ uniformly with respect to $t\ge\tau$. Together with~\eqref{complete}, one infers that
$$\inf_{\overline{\Omega^-}\cup\{x\in\overline{\Omega^+}:|x|\le L\}}u(t,\cdot)\to1\ \hbox{ as $t\to+\infty$}.$$
With Lemma~\ref{lemma-super sub} and the limits $\phi(-\infty)=1$, $\phi(+\infty)=0$, it follows that, for any $\lambda\in(0,1)$, there is $r_0>0$ such that the upper level set $\mathcal{U}_\lambda(t)$ defined in~\eqref{def-level set} satisfies~\eqref{upper level set} for all $t$ large enough. In other words, the Hausdorff distance between the level set $\mathcal{E}_\lambda(t)$ and the expanding spherical surface of radius $ct-((N-1)/c)\ln t$ in $\overline{\Omega^+}$ remains bounded as $t\to+\infty$. 

Furthermore, from~\eqref{defw+},~\eqref{inequw+} and the positivity of $u$, one gets that, for any $\eta>0$, there is $t_\eta<0$ such that $0<u(t,\cdot)\le\eta$ in $\overline{\Omega^+}$ for all $t\le t_\eta$. In particular, by choosing any~$\eta$ small enough so that $f<0$ in $(0,\eta]$, it then easily follows from the maximum principle and parabolic estimates that $u(t,x)\to0$ as $x_1\to+\infty$, uniformly with respect to $t\le t_\eta$, and then also locally uniformly in $t\in\R$ again from parabolic estimates. Since $u(t,x)\to1$ as $x_1\to-\infty$ (at least) locally uniformly in $t\in\R$ by~\eqref{unif1t}, and since~\eqref{initial} and~\eqref{upper level set} hold, it is elementary to check that $u$ is a transition front in the sense of Definition~\ref{Def1} with sets $\Gamma_t$ and $\Omega^\pm_t$ defined by~\eqref{Gamma_t}-\eqref{Omega_t}. Moreover, $u$ then has a global mean speed equal to $c$.

To complete the proof of Theorem~\ref{thm3}, it remains to show that $u$ converges locally uniformly along any of its level sets to planar front profiles as $t\to+\infty$. To do so, let $\tau>0$, $\tau_1\in \mathbb{R}$, $\tau_2\in\mathbb{R}$, $z_1\in\mathbb{R}$, $z_2\in\mathbb{R}$, $\delta>0$ and $\mu>0$ be as in Lemma \ref{lemma-super sub}. For $t\ge\max\{\tau_1,\tau_2\}$ and $x\in\overline{\Omega^+}$ with $|x|\ge L$, there holds 
\begin{equation}
\label{super-sub-1}\baa{l}
\displaystyle\phi\Big(|x|-c(t-\tau_2+\tau)+\frac{N-1}{c}\ln (t-\tau_2+\tau) + z_2\Big)-\delta e^{-\delta\vartheta (t-\tau_2)}-\delta e^{-\mu(|x|-L)}\vspace{3pt}\\
\qquad\displaystyle\le u(t,x)\le\phi\Big(|x|\!-\!c(t\!-\!\tau_1\!+\!\tau)+\frac{N\!-\!1}{c}\ln(t\!-\!\tau_1\!+\!\tau)\!+\!z_1\!\Big)\!+\!\delta e^{-\delta\vartheta(t\!-\!\tau_1)}\!+\!\delta e^{-\mu(|x|\!-\!L)}.\eaa
\end{equation}
Consider now any $\lambda\in(0,1)$, any sequence $(t_n)_{n\in\mathbb{N}}$ such that $t_n\to+\infty$ as $n\to+\infty$, and any sequence $(x_n)_{n\in\mathbb{N}}$ in $\overline{\Omega}$ such that $u(t_n,x_n)=\lambda$. From the properties of the previous paragraphs, one infers that $x_n\in \overline{\Omega^+}$ for all $n$ large enough, and $|x_n|\to+\infty$ as $n\to+\infty$. Therefore, up to extraction of a subsequence, two cases can occur: either $d(x_n,\partial\Omega)\to+\infty$ as $n\to+\infty$, or~$\sup_{n\in\N}d(x_n,\partial\Omega)<+\infty$.

\textit{Case 1: $d(x_n,\partial \Omega)\to+\infty$ as $n\to+\infty$}. Up to extraction of a subsequence, there is a unit vector $e$ such that $x_n/|x_n|\to e$ as $n\to+\infty$. From standard parabolic estimates, the functions 
\begin{equation*}
u_n(t,x)=u(t+t_n,x+x_n)
\end{equation*}
converge in $C^{1,2}_{(t,x);loc}(\mathbb{R}\times\mathbb{R}^N)$, up to extraction of a subsequence, to a solution $u_\infty$ of 
$$(u_\infty)_t=\Delta u_\infty+f(u_\infty)~~\text{in}~\mathbb{R}\times\mathbb{R}^N,$$
satisfying $u_\infty(0,0)=\lambda$. It also follows from~\eqref{super-sub-1} that, for every $(t,x)\in\R\times\R^N$, one has
\begin{equation}
\label{tn-xn-1}
\baa{l}
\displaystyle\phi\Big(|x+x_n|-c(t+t_n-\tau_2+\tau)+\frac{N-1}{c}\ln (t+t_n-\tau_2+\tau) + z_2\Big)\vspace{3pt}\\
\qquad\qquad\qquad -\delta e^{-\delta\vartheta (t+t_n-\tau_2)}-\delta e^{-\mu(|x+x_n|-L)}\vspace{3pt}\\
\qquad\le u_n(t,x)\vspace{3pt}\\
\qquad\displaystyle\le\phi\Big(|x+x_n|-c(t+t_n-\tau_1+\tau)+\frac{N-1}{c}\ln (t+t_n-\tau_1+\tau)+z_1\Big)\vspace{3pt}\\
\qquad\qquad\qquad+\delta e^{-\delta\vartheta (t+t_n-\tau_1)}+\delta e^{-\mu(|x+x_n|-L)}\eaa
\end{equation} 
for all $n$ large enough. Since $u_n(0,0)=\lambda\in(0,1)$ for all $n\in\N$, one gets that, for every $t_0\in\R$, the sequence $(|x_n|-c(t_n+t_0)+((N-1)/c)\ln(t_n+t_0))_{n\in\mathbb{N}}$ is bounded. Moreover, since $\ln(t+t_n+t_0)-\ln(t_n+t_0)\to0$ as $n\to+\infty$ for every $(t_0,t)\in\R^2$, and since $|x+x_n|=|x_n|+x\cdot x_n/|x_n|+o(1)=|x_n|+x\cdot e+o(1)$ as $n\to+\infty$ for every $x\in\R^N$, the passage to the limit as $n\to+\infty$ in~\eqref{tn-xn-1} yields the existence of some real numbers $A$ and $B$ such that, for all $(t,x)\in\R\times\R^N$,
\begin{equation*}
\phi(x\cdot e-ct +A)\le u_\infty(t,x)\le \phi(x\cdot e-ct+B).
\end{equation*}
One concludes from~\cite[Theorem 3.1]{BH2007} and the property $u_\infty(0,0)=\lambda$, that 
\begin{equation*}
u_\infty(t,x)=\phi(x\cdot e-ct+\phi^{-1}(\lambda))~\text{ for all}~(t,x)\in\mathbb{R}\times\mathbb{R}^N.
\end{equation*}
Consequently, 
\begin{equation*}
u(t+t_n,x+x_n)-\phi(x\cdot e-c t+\phi^{-1}(\lambda))\to0~~\text{in}~C^{1,2}_{(t,x);loc}(\mathbb{R}\times\mathbb{R}^N)~\text{as}~n\to+\infty.
\end{equation*}
The previous limit, together with standard parabolic estimates and the compactness of the unit sphere of $\R^N$, yields the desired conclusion~\eqref{convtaun}.

\textit{Case 2: $\sup_{n\in\N}d(x_n,\partial\Omega)<+\infty$.} Up to extraction of a subsequence, one has $x_n/|x_n|\to e$ as~$n\to+\infty$, where $e=(e_1,e')$ is a unit vector such that $e_1>0$ and $|e'|=e_1\tan\alpha$. From standard parabolic estimates, there are then an open half-space $H$ of $\R^N$ such that $e$ is parallel to $\partial H$ and a~$C^{1,2}_{(t,x)}(\R\times\overline{H})$ solution~$u_\infty$ of
\begin{equation}
\label{half space}\left\{\baa{ll}
(u_\infty)_t=\Delta u_\infty+f(u_\infty) & \text{in}~\mathbb{R}\times\overline{H},\vspace{3pt}\\
\nu\cdot\nabla u_\infty=0 & \text{on}~\mathbb{R}\times\partial H,\eaa\right.
\end{equation}
such that, up to extraction of a subsequence, $\|u(\cdot+t_n,\cdot+x_n)-u_\infty\|_{C^{1,2}_{(t,x)}(K\,\cap\,(\R\times(\overline{\Omega}-x_n)))}\to0$ as $n\to+\infty$ for every compact set $K\subset\R\times\overline{H}$. Following an analogous analysis as the preceding case, it comes that 
\begin{equation}\label{phiuinfty}
\phi(x\cdot e-ct+A)\le u_\infty(t,x)\le \phi(x\cdot e-ct+B)\hbox{ for all $(t,x)\in\mathbb{R}\times\overline{H}$},
\end{equation}
for some real numbers $A$ and $B$. Let us now call $\mathcal{R}$ the orthogonal reflection of $\R^N$ with respect to the hyperplane $\partial H$, and let us define
$$v_\infty(t,x)=\left\{\baa{ll}
u_\infty(t,x) & \hbox{ if }(t,x)\in\R\times\overline{H},\vspace{3pt}\\
u_\infty(t,\mathcal{R}x) &  \hbox{ if }(t,x)\in\R\times(\R^N\!\setminus\!\overline{H}),\eaa\right.$$
Thanks to the Neumann boundary conditions satisfied by $u_\infty$ on $\R\times\partial H$, the function $v_\infty$ is then a $C^{1,2}_{(t,x)}(\R\times\R^N)$ solution of the equation $(v_\infty)_t=\Delta v_\infty+f(v_\infty)$ in $\R\times\R^N$. Since $e$ is parallel to $\partial H$, one also gets from~\eqref{phiuinfty} that $\phi(x\cdot e-ct+A)\le v_\infty(t,x)\le \phi(x\cdot e-ct+B)$ for all $(t,x)\in\R\times\R^N$. It follows as in the previous case that $v_\infty(t,x)=\phi(x\cdot e-ct+\phi^{-1}(\lambda))$ for all $(t,x)\in\R\times\R^N$, that is,
$$u_\infty(t,x)=\phi(x\cdot e-ct+\phi^{-1}(\lambda))\ \hbox{ for all $(t,x)\in\R\times\overline{H}$},$$
which yields the desired conclusion. The proof of Theorem~\ref{thm3} is thereby complete.\hfill$\Box$


\section{Sufficient conditions on $(R,\alpha)$ for complete propagation or for blocking}\label{Sec-5}

In this section, we show Theorems~\ref{thm5} and~\ref{thm6}, which provide sufficient conditions on the parameters $(R,\alpha)$ such that the solutions $u$ of~\eqref{1} and~\eqref{initial} given in Proposition~\ref{thm1} propagate completely or are blocked.


\subsection{Complete propagation for $R\ge R_0$ and $\alpha\in(0,\pi/2)$: proof of Theo\-rem~\ref{thm5}}\label{Sec-51}

Consider any $\alpha\in(0,\pi/2)$, and assume that $R\ge R_0$, where $R_0$ is given in Lemma~\ref{lemsub}. Remember that the limit $0<u_\infty(x)\le1$ of $u(t,x)$ as $t\to+\infty$ solves~\eqref{u-infty} and $u_\infty(x)\to1$ as $x_1\to-\infty$. It then follows from Lemmas~\ref{lemsub} and~\ref{lemliouville1}, with $x_0=(-A,0,\cdots,0)$ and $A>0$ large enough, that $u_\infty\equiv1$ in $\overline{\Omega}$, that is, $u$ propagates completely.\hfill$\Box$


\subsection{Blocking for $R\ll 1$ and $\alpha$ not too small: proof of Theorem~\ref{thm6}}\label{Sec-52}

This subsection is devoted to the proof of Theorem \ref{thm6}. Throughout this subsection, we assume that $N\ge3$, and we are given
$$\alpha_*\in\Big(0,\frac{\pi}{2}\Big)\ \hbox{ and }\ L_*>0.$$
We will consider domains~$\Omega$ satisfying~\eqref{defOmega}-\eqref{h} whose left parts $\Omega^-=\{x\in\Omega:x_1\le0\}$ have cross sections of small radius $R$, whereas the angles $\alpha$ of the right parts $\Omega^+=\{x\in\Omega:x_1>0\}$ are not too small, namely
$$\alpha_*\le\alpha<\frac{\pi}{2}.$$
We also always assume that
$$0<R<L\le L_*$$
in~\eqref{defOmega}-\eqref{h}. We then aim at establishing the existence of a non-constant supersolution of~\eqref{u-infty} that will block the propagation of the solution $u$ of~\eqref{1} satisfying the past condition~\eqref{initial}. More precisely, we will prove that, when the measure $|\{x\in\Omega: -1<x_1<0\}|$ is sufficiently small (that is, when $R>0$ is small enough), then there exists a supersolution~$\overline{u}$ of~\eqref{u-infty} such that $\overline{u}(x)=1$ for all $x\in\overline{\Omega}$ with $x_1\le -1$ and $\overline{u}(x)\to 0$ as~$x_1\to +\infty$. The proof is based on the construction of solutions of reduced problems in truncated domains, which is itself based on variational arguments as in~\cite{BHM2009,BBC2016}.

\subsubsection*{Some notations}

To apply this scheme, let us first list the definitions of some sets that will be used in the sequel:
\be\label{sets}\left\{\baa{llll}
\Omega^-_{R,1} & = & \{x\in\Omega:-1<x_1\le0\}, & \vspace{3pt}\\
\Gamma^-_{R,1} & = & \{x\in\overline{\Omega}:x_1=-1\}, & \vspace{3pt}\\
\Omega'_{R,\alpha} & = & \{x\in\Omega: x_1>-1\}, & \vspace{3pt}\\
\Omega'_{R,\alpha,r} & = & \Omega^-_{R,1}\cup\big\{x\in\Omega^+:|x|<r\big\} & \hbox{for }r\ge L_*,\vspace{3pt}\\
C^+_{\alpha,r} & = & \big\{x\in\Omega^+:|x'|<x_1\tan\alpha,\,|x|<r\big\} & \hbox{for $r\ge L_*$},\vspace{3pt}\\
\Gamma^+_{\alpha,r} & = & \big\{x\in\overline{\Omega^+}:|x|=r\big\} & \hbox{for $r\ge L_*$}.\eaa\right.
\ee
Notice that $\Omega^-_{R,1}$ and $\Gamma^-_{R,1}$ are actually independent of $\alpha$, and that $C^+_{\alpha,r}$ (a conical sector) and~$\Gamma^+_{\alpha,r}$ are independent of $R$ and $L$ with $0<R<L\le L_*$ in~\eqref{defOmega}-\eqref{h}.
\begin{figure}[H]
	\centering
	\includegraphics[scale=1.7]{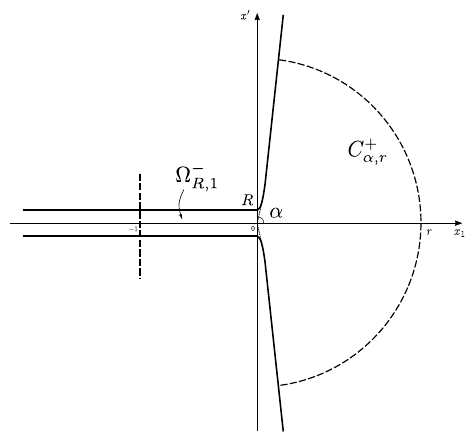}
	\caption{\small Illustration of some domains that will be used in the proof of Theorem \ref{thm6}.}
\end{figure}

We will consider a reduced elliptic problem in $\Omega'_{R,\alpha}$:
\begin{align}
\label{reduced}
\begin{cases}
\Delta w+f(w)=0 ~&\text{in}~\Omega'_{R,\alpha},\\
\nu\cdot\nabla w=0~&\text{on}~\partial\Omega'_{R,\alpha}\backslash \Gamma^-_{R,1},\\
w=1 ~&\text{on}~\Gamma^-_{R,1},\\
w(x)\to 0~&\text{as}~|x|\to+\infty~\text{in}~\overline{\Omega'_{R,\alpha}}.
\end{cases}
\end{align}
We shall prove the existence of a positive $C^2(\overline{\Omega'_{R,\alpha}})$ solution $w$ of \eqref{reduced}. Such a solution $w$, extended by $1$ in $\overline{\Omega}\!\setminus\!\overline{\Omega'_{R,\alpha}}$, will give rise to a supersolution $\overline{u}$ of~\eqref{u-infty} which will block the propagation of the solution $u$ of~\eqref{1} with past condition~\eqref{initial}.

For this purpose, we first consider the corresponding truncated problem in the domain $\Omega'_{R,\alpha,r}$ (for $r\ge L_*$), and show that the elliptic problem
\begin{align}
\label{truncated}
\begin{cases}
\Delta w_r+f(w_r)=0 ~&\text{in}~\Omega'_{R,\alpha,r},\vspace{3pt}\\
\nu\cdot\nabla w_r=0~&\text{on}~\partial\Omega'_{R,\alpha,r}\!\setminus\!(\Gamma^-_{R,1}\cup\Gamma^+_{\alpha,r}),\vspace{3pt}\\
w_r=1 ~&\text{on}~\Gamma^-_{R,1},\vspace{3pt}\\
w_r=0~&\text{on}~\Gamma^+_{\alpha,r},
\end{cases}
\end{align}
admits a $C^2(\overline{\Omega'_{R,\alpha,r}})$ solution $w_r$ such that $0<w_r<1$ in $\overline{\Omega'_{R,\alpha,r}}\!\setminus\!(\Gamma^-_{R,1}\cup\Gamma^+_{\alpha,r})$. Then, we will prove that $w_r\to w$ as $r\to+\infty$ locally uniformly in $\overline{\Omega'_{R,\alpha}}$, with $w$ satisfying~\eqref{reduced}.

\subsubsection*{Truncated problem~\eqref{truncated} in $\Omega'_{R,\alpha,r}$}

For any bounded measurable subset $D$ of $\R^N$, let us define the functional
$$H^1(D)\ni w\mapsto J_D(w)=\int_D\frac{|\nabla w|^2}{2}+F(w),\footnote{We equip $H^1(D)$ with the norm $\|w\|_{H^1(D)}=\sqrt{\|\,|\nabla w|\,\|_{L^2(D)}^2+\|w\|_{L^2(D)}^2}$.}$$
where $F(t)=\int_t^1f(s)ds$. From \eqref{f-bistable-1}-\eqref{f-bistable-2} and the affine extension of $f$ outside the interval~$[0,1]$, there exists $\kappa>0$ such that
\be\label{defkappa}
0\le\kappa(t-1)^2\le F(t)\le\frac{1+t^2}{\kappa}
\ee
for all $t\in\mathbb{R}$ (hence, $J_D$ is well defined in $H^1(D)$ for every bounded measurable subset $D$ of~$\R^N$). For $r\ge L_*$, and $0<R<L\le L_*$ and $\alpha_*\le\alpha<\pi/2$ in~\eqref{defOmega}-\eqref{h}, define now
\be\label{defHRalphar}
H_{R,\alpha,r}=\big\{w\in H^1(\Omega'_{R,\alpha,r}): w=1~\text{on}~\Gamma^-_{R,1}\text{ and }w=0~\text{on}~\Gamma^+_{\alpha,r}\big\},
\ee
where the equalities on $\Gamma^-_{R,1}$ and $\Gamma^+_{\alpha,r}$ are understood in the sense of trace. We aim at finding a local minimizer of $J_{\Omega'_{R,\alpha,r}}$ belonging to $H_{R,\alpha,r}$. That will lead to the existence of a solution to~\eqref{truncated}. 

We start with the following result on the functional $J_{C^+_{\alpha,r}}$, where the conical sectors~$C^+_{\alpha,r}$ are defined in~\eqref{sets} (we recall that these sets $C^+_{\alpha,r}$ are independent of $R$ and $L$).

\begin{lemma}
\label{lem5.1}
The function $0$ is a strict local minimum of $J_{C^+_{\alpha,r}}$ in the space $H^1(C^+_{\alpha,r})$ and, more precisely, there exist $\sigma>0$ and $\delta>0$ such that, for all $\alpha\in[\alpha_*,\pi/2)$, $r\ge L_*$, and $w\in H^1(C^+_{\alpha,r})$ with $\|w\|_{H^1(C^+_{\alpha,r})}\le \delta$, there holds
$$J_{C^+_{\alpha,r}}(w)\ge J_{C^+_{\alpha,r}}(0)+\sigma\|w\|^2_{H^1(C^+_{\alpha,r})}.$$
\end{lemma}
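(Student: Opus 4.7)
The plan is to Taylor-expand $F$ around $0$ and absorb the super-quadratic remainder via a uniform Sobolev embedding for the family of truncated cones $C^+_{\alpha,r}$. Set $R(w):=F(w)-F(0)+\tfrac{f'(0)}{2}w^2$, so that
$$J_{C^+_{\alpha,r}}(w)-J_{C^+_{\alpha,r}}(0)=\int_{C^+_{\alpha,r}}\Bigl(\tfrac12|\nabla w|^2+\tfrac{|f'(0)|}{2}w^2+R(w)\Bigr).$$
Since $f\in C^{1,1}(\R)$ (with the affine extension) and $f(0)=0$, one checks that $R(0)=R'(0)=R''(0)=0$, that $R''$ is globally bounded, and that $|R''(w)|\le L_0|w|$ near $0$, where $L_0$ is the Lipschitz constant of $f'$. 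Integrating twice yields $C_0>0$ and $\rho_0\in(0,1]$ such that $|R(w)|\le C_0|w|^3$ for $|w|\le\rho_0$ and $|R(w)|\le C_0 w^2$ for every $w\in\R$. Setting $m:=\min(1/2,|f'(0)|/2)>0$, this gives the clean lower bound
$$J_{C^+_{\alpha,r}}(w)-J_{C^+_{\alpha,r}}(0)\ \ge\ m\,\|w\|_{H^1}^2-\int_{C^+_{\alpha,r}}|R(w)|.$$

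The main step is to show $\int|R(w)|=o(\|w\|_{H^1}^2)$ as $\|w\|_{H^1}\to 0$, uniformly in $(\alpha,r)$. For any $\rho\in(0,\rho_0]$ I split the domain at $|w|=\rho$. On $\{|w|\le\rho\}$ the first bound gives $|R(w)|\le C_0\rho\,w^2$, contributing at most $C_0\rho\,\|w\|_{H^1}^2$; on $\{|w|>\rho\}$ I combine $|R(w)|\le C_0 w^2$ with H\"older and Chebyshev to get
$$\int_{\{|w|>\rho\}}w^2\ \le\ \|w\|_{L^{2^*}}^2\,\bigl|\{|w|>\rho\}\bigr|^{2/N}\ \le\ \rho^{-4/N}\,\|w\|_{L^{2^*}}^2\,\|w\|_{L^2}^{4/N},$$
where $2^*=2N/(N-2)>2$ (this is where the hypothesis $N\ge 3$ enters). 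Assuming a uniform Sobolev inequality $\|w\|_{L^{2^*}(C^+_{\alpha,r})}\le C_S\|w\|_{H^1(C^+_{\alpha,r})}$ with $C_S=C_S(N,\alpha_*,L_*)$ independent of $(\alpha,r)$, the second piece is bounded by $C_0 C_S^2\rho^{-4/N}\|w\|_{H^1}^{2+4/N}$, hence genuinely $o(\|w\|_{H^1}^2)$. Given any $\eta\in(0,m/3)$, I first fix $\rho\le\rho_0$ with $C_0\rho\le\eta$, then choose $\delta>0$ so small that $C_0 C_S^2\rho^{-4/N}\delta^{4/N}\le\eta$; for $\|w\|_{H^1}\le\delta$ one obtains $\int|R(w)|\le 2\eta\|w\|_{H^1}^2$, yielding the conclusion with $\sigma:=m-2\eta>0$.

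The main obstacle is therefore the \emph{uniform} Sobolev constant $C_S$. I would handle this by scaling: setting $\tilde w(y):=w(ry)$ on $C^+_{\alpha,1}$, a direct computation (using $N/2^*-(N-2)/2=0$ and $N/2^*-N/2=-1$) yields
$$\|w\|_{L^{2^*}(C^+_{\alpha,r})}\ \le\ C_S(\alpha,1)\bigl(\|\nabla w\|_{L^2(C^+_{\alpha,r})}+r^{-1}\|w\|_{L^2(C^+_{\alpha,r})}\bigr)\ \le\ \sqrt{2}\,C_S(\alpha,1)\max(1,L_*^{-1})\,\|w\|_{H^1(C^+_{\alpha,r})},$$
reducing everything to a uniform bound on $C_S(\alpha,1)$ for $\alpha\in[\alpha_*,\pi/2)$. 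The sets $C^+_{\alpha,1}$ form a continuous family of bounded Lipschitz domains whose lateral boundary is the graph $x_1=|x'|\cot\alpha$ with slope $\cot\alpha\in(0,\cot\alpha_*]$, uniformly controlled; as $\alpha\nearrow\pi/2$ they converge in the Hausdorff sense to the upper half-ball $B_1\cap\{x_1>0\}$. A Stein-type extension argument on a finite atlas, chosen uniformly in $\alpha$, then produces a bounded extension operator $H^1(C^+_{\alpha,1})\to H^1(\R^N)$ whose norm is controlled by $\alpha_*$ alone, giving $\sup_{\alpha\in[\alpha_*,\pi/2)}C_S(\alpha,1)<\infty$ and closing the argument with $\sigma$ and $\delta$ depending only on $f$, $N$, $\alpha_*$ and $L_*$.
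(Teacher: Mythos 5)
Your proof is correct and follows essentially the same route as the paper: Taylor expansion of $F$ at $0$ (using $f'(0)<0$ for coercivity of the quadratic part), a Sobolev embedding $H^1\hookrightarrow L^{2N/(N-2)}$ made uniform in $(\alpha,r)$ by scaling to a fixed reference cone, and absorption of the superquadratic remainder for $\|w\|_{H^1}$ small. The only difference is cosmetic: the paper controls the remainder via the pointwise bound $|\eta(t)|\le\sigma+C'|t|^{p^*-2}$ and integrates directly, whereas you split at the level set $\{|w|>\rho\}$ and use Chebyshev; both yield a term of order $\|w\|_{H^1}^{2+\varepsilon}$.
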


\begin{proof}
Throughout the proof, $\alpha\in[\alpha_*,\pi/2)$ and $r\ge L_*$ are arbitrary. First observe, from the Taylor expansion and the affine expansion of $f$ outside $[0,1]$, that there exist a continuous bounded function $\eta:\R\to\R$ such that $\eta(0)=0$ and
$$F(t)=F(0)+F'(0)t+\frac{F''(0)}{2}\,t^2+\eta(t)\,t^2=F(0)-\frac{f'(0)}{2}\,t^2+\eta(t)\,t^2$$
for all $t\in\R$. Therefore, by setting
$$\sigma=\min\Big(\frac16,\frac{-f'(0)}{6}\Big)>0,$$
we have
\begin{equation}
\label{estimate1}
J_{C^+_{\alpha,r}}(w)-J_{C^+_{\alpha,r}}(0)=\!\int_{C^+_{\alpha,r}}\!\!\frac{|\nabla w|^2}{2}-\frac{f'(0)}{2}\,w^2+\eta(w)\,w^2\ge3\sigma\|w\|_{H^1(C^+_{\alpha,r})}^2-\int_{C^+_{\alpha,r}}\!\!|\eta(w)|\,w^2
\end{equation}
for all $w\in H^1(C^+_{\alpha,r})$.

Define
$$p^*=\frac{2N}{N-2}\in(2,+\infty).$$
From Sobolev embedding theorem and the uniform (with respect to $\alpha\in[\alpha_*,\pi/2)$) Lipschitz continuity of the conical sectors $C^+_{\alpha,L_*}$, there is a positive constant $C$ (depending on $\alpha_*$, $L_*$ and~$N$, but independent of $\alpha\in[\alpha_*,\pi/2)$ and $r\ge L_*$) such that
$$\|v\|_{L^{p^*}(C^+_{\alpha,L_*})}\le C\|v\|_{H^1(C^+_{\alpha,L_*})}$$
for all $\alpha\in[\alpha_*,\pi/2)$ and $v\in H^1(C^+_{\alpha,L_*})$.\footnote{We here use the assumption $N\ge3$. In dimension $N=2$, the sets $H^1(C^+_{\alpha,L_*})$ are not embedded into $L^\infty(C^+_{\alpha,L_*})$, and the following arguments would not work as such in dimension $N=2$.} On the other hand, since the function $\eta$ is continuous, bounded and vanishes at $0$, there is a positive constant $C'$ (independent of $\alpha$ and $r$) such that
$$|\eta(t)|\le\sigma+C'|t|^{p^*-2}$$
for all $t\in\R$. Hence, for all $\alpha\in[\alpha_*,\pi/2)$, $r\ge L_*$ and $w\in H^1(C^+_{\alpha,r})$, there holds, with $v=w(r\cdot/L_*)\in H^1(C^+_{\alpha,L_*})$,
$$\baa{rcl}
\displaystyle\int_{C^+_{\alpha,r}}|\eta(w)|\,w^2 & \le & \displaystyle\sigma\int_{C^+_{\alpha,r}}w^2+C'\int_{C^+_{\alpha,r}}|w|^{p^*}\vspace{3pt}\\
& \le & \displaystyle\sigma\|w\|_{H^1(C^+_{\alpha,r})}^2+\frac{C'r^N}{L_*^N}\|v\|_{L^{p^*}(C^+_{\alpha,L_*})}^{p^*}\vspace{3pt}\\
& \le & \displaystyle\sigma\|w\|_{H^1(C^+_{\alpha,r})}^2+\frac{C'C^{p^*}r^N}{L_*^N}\|v\|_{H^1(C^+_{\alpha,L_*})}^{p^*}\vspace{3pt}\\
& = & \displaystyle\sigma\|w\|_{H^1(C^+_{\alpha,r})}^2+\frac{C'C^{p^*}r^N}{L_*^N}\Big(\int_{C^+_{\alpha,r}}\frac{L_*^{N-2}}{r^{N-2}}|\nabla w|^2+\int_{C^+_{\alpha,r}}\frac{L_*^N}{r^N}w^2\Big)^{p^*/2}\vspace{3pt}\\
& \le & \displaystyle\sigma\|w\|_{H^1(C^+_{\alpha,r})}^2+C'C^{p^*}\Big(\frac{L_*}{r}\Big)^{(N-2)p^*/2-N}\|w\|_{H^1(C^+_{\alpha,r})}^{p^*}\vspace{3pt}\\
& = & \sigma\|w\|_{H^1(C^+_{\alpha,r})}^2+C'C^{p^*}\|w\|_{H^1(C^+_{\alpha,r})}^{p^*}.\eaa$$
Together with~\eqref{estimate1}, one gets that
$$J_{C^+_{\alpha,r}}(w)-J_{C^+_{\alpha,r}}(0)\ge2\sigma\|w\|_{H^1(C^+_{\alpha,r})}^2-C'C^{p^*}\|w\|_{H^1(C^+_{\alpha,r})}^{p^*}\ge\sigma\|w\|_{H^1(C^+_{\alpha,r})}^2$$
for all $\alpha\in[\alpha_*,\pi/2)$, $r\ge L_*$ and $w\in H^1(C^+_{\alpha,r})$ such that
$$\|w\|_{H^1(C^+_{\alpha,r})}\le\delta:=\Big(\frac{\sigma}{C'C^{p^*}}\Big)^{1/(p^*-2)}.$$
Since the positive constants $\sigma$ and $\delta$ do not depend on $\alpha\in[\alpha_*,\pi/2)$ and $r\ge L_*$, the proof of Lemma~\ref{lem5.1} is thereby complete.
\end{proof}

Next, let us focus on the domain $\Omega'_{R,\alpha,r}$, with $0<R<L\le L_*$, $\alpha\in[\alpha_*,\pi/2)$, and $r\ge L_*$. We define the following function $w_0$ in $\overline{\Omega'_{R,\alpha,r}}$ by:
\begin{align*}
w_0(x)=
\begin{cases}
-x_1~~&\text{if}~x\in\overline{\Omega'_{R,\alpha,r}}\hbox{ with }x_1\le0,\vspace{3pt}\\
0~~&\text{if}~x\in\overline{\Omega'_{R,\alpha,r}}\hbox{ with }x_1>0.
\end{cases}
\end{align*}
It is immediate to see that $w_0\in H_{R,\alpha,r}$, with $H_{R,\alpha,r}$ defined in~\eqref{defHRalphar}.

\begin{lemma}
\label{lem5.3}
Let $\delta>0$ be as in Lemma~$\ref{lem5.1}$. Then there exist $R_*\in(0,L_*)$ and $\gamma>0$ such that, for every funnel-shaped domain $\Omega$ satisfying~\eqref{defOmega}-\eqref{h} with $\alpha\in[\alpha_*,\pi/2)$, $0<R\le R_*$ and $0<R<L\le L_*$, for every $r\ge L_*$, and for every $w\in H_{R,\alpha,r}$ with $\|w-w_0\|_{H^1(\Omega'_{R,\alpha,r})}=\delta$, there holds 
$$J_{\Omega'_{R,\alpha,r}}(w)\ge J_{\Omega'_{R,\alpha,r}}(w_0)+\gamma.$$
\end{lemma}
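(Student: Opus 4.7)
The plan is to split the functional difference as $J_{\Omega'_{R,\alpha,r}}(w) - J_{\Omega'_{R,\alpha,r}}(w_0) = I_1 + I_2$, where $I_1$ is the contribution from the narrow left piece $\Omega^-_{R,1}$ (on which $w_0(x) = -x_1 \in [0,1]$ and $|\nabla w_0| = 1$) and $I_2$ is the contribution from the outer piece $\Omega^+ \cap \{|x|<r\}$ (on which $w_0 = 0$). The leverage will come from the fact that $|\Omega^-_{R,1}| = \omega_{N-1} R^{N-1}$ is small when $R$ is small, while $\Omega^+ \cap \{|x|<r\}$ contains the cone $C^+_{\alpha,r}$ on which Lemma~\ref{lem5.1} furnishes a strict local-minimum bound with a uniform positive constant. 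The two pieces will be tied together via the additive decomposition $\|w-w_0\|^2_{H^1(\Omega^-_{R,1})} + \|w\|^2_{H^1(\Omega^+\cap\{|x|<r\})} = \delta^2$ of the $H^1$-norm across disjoint subdomains.

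For the left piece, I will use $F \ge 0$ (see~\eqref{defkappa}) and $|\nabla w|^2 \ge 0$ to obtain $J_{\Omega^-_{R,1}}(w) \ge 0$, and compute $J_{\Omega^-_{R,1}}(w_0) = \omega_{N-1} R^{N-1} (\tfrac{1}{2} + \int_0^1 F(s)\,ds)$ explicitly, yielding $I_1 \ge -C_1 R^{N-1}$ for a constant $C_1$ depending only on $f$ and $N$. For the right piece, since $w_0 = 0$ there, $I_2 = J_{\Omega^+\cap\{|x|<r\}}(w) - J_{\Omega^+\cap\{|x|<r\}}(0)$; restricting $w$ to the sub-cone $C^+_{\alpha,r} \subset \Omega^+ \cap \{|x|<r\}$ one has $\|w\|_{H^1(C^+_{\alpha,r})} \le \|w\|_{H^1(\Omega^+\cap\{|x|<r\})} \le \delta$, so Lemma~\ref{lem5.1} gives $\int_{C^+_{\alpha,r}} [\tfrac{|\nabla w|^2}{2} + F(w) - F(0)] \ge \sigma \|w\|^2_{H^1(C^+_{\alpha,r})}$. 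On the complementary bounded region $D := (\Omega^+\cap\{|x|<r\}) \setminus C^+_{\alpha,r} \subset B_{L_*}$, I will use the Taylor bound $|F(w) - F(0)| \le \tfrac{M}{2} w^2$ (with $M = \|f'\|_\infty$) together with a uniform Sobolev embedding $H^1 \hookrightarrow L^{p^*}$ on $D$, mimicking the scaling argument of Lemma~\ref{lem5.1}, to obtain a quantitatively similar positive lower bound for the corresponding piece of $I_2$.

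From the norm decomposition, a dichotomy will arise: either $\|w\|_{H^1(C^+_{\alpha,r})} \ge \delta/2$, in which case Lemma~\ref{lem5.1} alone gives the relevant part of $I_2 \ge \sigma \delta^2/4$, and combined with $I_1 \ge -C_1 R^{N-1}$ this yields $I_1 + I_2 \ge \sigma\delta^2/8$ upon choosing $R_*$ so that $C_1 R_*^{N-1} \le \sigma\delta^2/8$; or most of the $H^1$-mass of $w - w_0$ lives on $\Omega^-_{R,1} \cup D$, in which case I will refine the left-piece bound using the Poincar\'e inequality on $(-1,0)$ (zero Dirichlet at $-1$) together with the second-order Taylor bound $F(w_0+v) - F(w_0) \ge -f(w_0) v - \tfrac{M}{2} v^2$. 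Controlling the linear term $-\int_{\Omega^-_{R,1}} \partial_{x_1}(w-w_0) = -\int_{B_R}(w-w_0)(0,\cdot)\,dx'$ by $|B_R|^{1/2} \eta$ with $\eta = \|w-w_0\|_{H^1(\Omega^-_{R,1})}$, one should obtain $I_1 \ge c\, \eta^2 - C R^{(N-1)/2} \eta$, which dominates for $\eta$ of order $\delta$ and $R_*$ small. The hardest part I anticipate is ensuring the uniformity of the Sobolev and Poincar\'e constants on $\Omega^+\cap\{|x|<r\}$, $C^+_{\alpha,r}$, and $D$ with respect to all four parameters $R,\alpha,r,L$ in the prescribed ranges, particularly in the limits $\alpha\to\pi/2$ and $R\to 0$; this should follow, as implicit in Lemma~\ref{lem5.1}, from the fact that the relevant family of domains admits uniformly Lipschitz boundary charts.
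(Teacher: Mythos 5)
Your skeleton (split off the cone, apply Lemma~\ref{lem5.1} there, exploit the smallness of the left piece) matches the paper's, but two of the specific mechanisms you propose do not survive scrutiny. First, the region $D=(\Omega^+\cap\{|x|<r\})\setminus C^+_{\alpha,r}$ is a sliver of thickness at most $R$ around the lateral boundary of the cone, and the Sobolev embedding $H^1(D)\hookrightarrow L^{p^*}(D)$ is \emph{not} uniform as $R\to0$: testing with $w\equiv1$ gives $\|w\|_{L^{p^*}(D)}/\|w\|_{H^1(D)}=|D|^{1/p^*-1/2}\to+\infty$, so the family of slivers does not admit the uniform Lipschitz charts you invoke, and your claimed positive lower bound on $D$ is unjustified. (This particular step is easily repaired: since $F\ge0$ by~\eqref{defkappa}, one has $J_D(w)-J_D(0)\ge -F(0)|D|\ge -CR$, which is all you need.) Second, and more seriously, the branch of your dichotomy where the $H^1$-mass sits on $\Omega^-_{R,1}$ relies on the coercivity $I_1\ge c\,\eta^2-CR^{(N-1)/2}\eta$ with $c>0$, obtained from the crude Taylor bound $F(w_0+v)-F(w_0)\ge -f(w_0)v-\tfrac{M}{2}v^2$ and the Poincar\'e inequality on $(-1,0)$. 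The resulting quadratic form is $\tfrac12\|\nabla v\|_{L^2}^2-\tfrac{M}{2}\|v\|_{L^2}^2$ with $v$ vanishing only at $x_1=-1$; the one-dimensional Poincar\'e constant here is $\pi^2/4$ and cannot beat an arbitrary $M=\|f'\|_\infty$ (indeed $w_0$ need not be a local minimizer of $J_{\Omega^-_{R,1}}$ at all when $f'(\theta)$ is large), so $c$ may be negative and this branch collapses.

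The device that closes both gaps --- and the one the paper uses --- is to expand around the constant $1$ rather than around $w_0$: by~\eqref{defkappa} one has $F(t)\ge\kappa(t-1)^2$ globally, hence $J_{\Omega^-_{R,1}\cup D}(w)\ge\rho\|w-1\|^2_{H^1(\Omega^-_{R,1}\cup D)}$ with $\rho=\min(1/2,\kappa)$, while both $J_{\Omega^-_{R,1}\cup D}(w_0)$ and $\|w_0-1\|^2_{H^1(\Omega^-_{R,1}\cup D)}$ are $O(R^{N-1}+R)$ because $|\Omega^-_{R,1}|=\omega_{N-1}R^{N-1}$ and $|D|\le CR$. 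A triangle inequality then yields $I_1+I_2^D\ge\tfrac{\rho}{2}\|w-w_0\|^2_{H^1(\Omega^-_{R,1}\cup D)}-O(R^{N-1}+R)$ with a genuinely positive coefficient, and adding the cone estimate from Lemma~\ref{lem5.1} gives $J(w)-J(w_0)\ge\min(\sigma,\rho/2)\,\delta^2-O(R^{N-1}+R)$ directly, with no dichotomy needed. As written, your proof has a genuine gap; I recommend replacing the Taylor-around-$w_0$ and Sobolev-on-$D$ steps by this coercivity-around-$1$ argument.
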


\begin{proof} 
Let $\delta>0$ and $\sigma>0$ be as in Lemma~\ref{lem5.1}. Throughout the proof, $\alpha\in[\alpha_*,\pi/2)$ and $r\ge L_*$ are arbitrary. We consider funnel-shaped domains $\Omega$ satisfying~\eqref{defOmega}-\eqref{h}, with a parameter $R$ satisfying $0<R<L\le L_*$, and some further restrictions on $R$ will appear later. Consider any $w\in H_{R,\alpha,r}$ with
$$\|w-w_0\|_{H^1(\Omega'_{R,\alpha,r})}=\delta.$$
In order to estimate $J_{\Omega'_{R,\alpha,r}}(w)-J_{\Omega'_{R,\alpha,r}}(w_0)$, we decompose the integrals over two disjoint subsets of~$\Omega'_{R,\alpha,r}$, namely $\Omega^-_{R,1}\cup S^+_{R,\alpha}$ and $C^+_{\alpha,r}$, with
\be\label{defS+}
S^+_{R,\alpha}=\Omega'_{R,\alpha,r}\!\setminus\!\big(\Omega^-_{R,1}\cup C^+_{\alpha,r})=\big\{(x_1,x')\in\R^N:0\!<\!x_1\!<\!L\cos\alpha,\,x_1\tan\alpha\le|x'|<h(x_1)\big\},
\ee
where the function $h$ is as in~\eqref{defOmega}-\eqref{h} (notice that $S^+_{R,\alpha}$ depends on $\Omega$ but not on $r$, since $L\le L_*\le r$). One has
$$J_{\Omega'_{R,\alpha,r}}(w)-J_{\Omega'_{R,\alpha,r}}(w_0)=J_{\Omega^-_{R,1}\cup S^+_{R,\alpha}}(w)-J_{\Omega^-_{R,1}\cup S^+_{R,\alpha}}(w_0)+J_{C^+_{\alpha,r}}(w)-J_{C^+_{\alpha,r}}(w_0).$$
Since $\|w\|_{H^1(C^+_{\alpha,r})}=\|w-w_0\|_{H^1(C^+_{\alpha,r})}\le\|w-w_0\|_{H^1(\Omega'_{R,\alpha,r})}=\delta$, Lemma~\ref{lem5.1} yields
$$J_{C^+_{\alpha,r}}(w)-J_{C^+_{\alpha,r}}(w_0)=J_{C^+_{\alpha,r}}(w)-J_{C^+_{\alpha,r}}(0)\ge\sigma\|w\|_{H^1(C^+_{\alpha,r})}^2=\sigma\|w-w_0\|_{H^1(C^+_{\alpha,r})}^2,$$
hence
\be\label{ineq11}
J_{\Omega'_{R,\alpha,r}}(w)-J_{\Omega'_{R,\alpha,r}}(w_0)\ge J_{\Omega^-_{R,1}\cup S^+_{R,\alpha}}(w)-J_{\Omega^-_{R,1}\cup S^+_{R,\alpha}}(w_0)+\sigma\|w-w_0\|_{H^1(C^+_{\alpha,r})}^2.
\ee

Let us now estimate $J_{\Omega^-_{R,1}\cup S^+_{R,\alpha}}(w)-J_{\Omega^-_{R,1}\cup S^+_{R,\alpha}}(w_0)$. On the one hand, with
$$\rho:=\min\Big(\frac12,\kappa\Big)>0$$
and $\kappa>0$ as in~\eqref{defkappa}, there holds
\be\label{ineq12}\baa{rcl}
\displaystyle J_{\Omega^-_{R,1}\cup S^+_{R,\alpha}}(w)=\int_{\Omega^-_{R,1}\cup S^+_{R,\alpha}}\frac{|\nabla(w-1)|^2}{2}+F(w) & \!\!\!\ge\!\!\! & \displaystyle\int_{\Omega^-_{R,1}\cup S^+_{R,\alpha}}\frac{|\nabla(w-1)|^2}{2}+\kappa\,(w-1)^2\vspace{3pt}\\
& \!\!\!\ge\!\!\! & \displaystyle\rho\|w-1\|_{H^1(\Omega^-_{R,1}\cup S^+_{R,\alpha})}^2.\eaa
\ee
On the other hand,
$$\baa{rcl}
\displaystyle J_{\Omega^-_{R,1}\cup S^+_{R,\alpha}}(w_0) & = & \displaystyle\Big(\int_{\Omega^-_{R,1}}\frac{|\nabla w_0|^2}{2}+F(w_0)\Big)+F(0)\,|S^+_{R,\alpha}|\vspace{3pt}\\
& \le & \displaystyle\Big(\frac12+\max_{[0,1]}F\Big)\,|\Omega^-_{R,1}|+F(0)\,|S^+_{R,\alpha}|=\Big(\frac12+F(\theta)\Big)\omega_{N-1}R^{N-1}+F(0)\,|S^+_{R,\alpha}|,\eaa$$
where $\omega_{N-1}$ denotes the $(N-1)$-dimensional Lebesgue measure of the unit Euclidean ball in~$\R^{N-1}$. Since $0<R<L\le L_*$ and $h$ satisfies~\eqref{defOmega}-\eqref{h}, one has $0\le h(x_1)-x_1\tan\alpha\le h(0)=R$ for all $x_1\in[0,L\cos\alpha]$, hence
$$\baa{rcl}
|S^+_{R,\alpha}| & = & \displaystyle\int_0^{L\cos\alpha}\omega_{N-1}\big(h(x_1)^{N-1}-(x_1\tan\alpha)^{N-1}\big)dx_1\vspace{3pt}\\
& \le & \displaystyle\int_0^{L\cos\alpha}\omega_{N-1}\big((x_1\tan\alpha+R)^{N-1}-(x_1\tan\alpha)^{N-1}\big)dx_1\vspace{3pt}\\
& = & \displaystyle\frac{\omega_{N-1}}{N\tan\alpha}\big((L\sin\alpha+R)^N-R^N-(L\sin\alpha)^N\big)\vspace{3pt}\\
& \le & \displaystyle\omega_{N-1}\cot\alpha_*(L_*\sin\alpha+L_*)^{N-1}R\le\omega_{N-1}2^{N-1}L_*^{N-1}R\cot\alpha_*.\eaa$$
Therefore,
$$J_{\Omega^-_{R,1}\cup S^+_{R,\alpha}}(w_0)\le\Big(\frac12+F(\theta)\Big)\omega_{N-1}R^{N-1}+F(0)\omega_{N-1}2^{N-1}L_*^{N-1}R\cot\alpha_*$$
and, together with~\eqref{ineq12}, 
$$\baa{l}
J_{\Omega^-_{R,1}\cup S^+_{R,\alpha}}(w)-J_{\Omega^-_{R,1}\cup S^+_{R,\alpha}}(w_0)\vspace{3pt}\\
\quad\ge\displaystyle\rho\|w-1\|_{H^1(\Omega^-_{R,1}\cup S^+_{R,\alpha})}^2-\Big(\frac12+F(\theta)\Big)\omega_{N-1}R^{N-1}-F(0)\omega_{N-1}2^{N-1}L_*^{N-1}R\cot\alpha_*\vspace{3pt}\\
\quad\ge\displaystyle\frac{\rho}{2}\|w-w_0\|_{H^1(\Omega^-_{R,1}\cup S^+_{R,\alpha})}^2\!-\rho\|w_0-1\|_{H^1(\Omega^-_{R,1}\cup S^+_{R,\alpha})}^2\!-\!\Big(\frac12\!+\!F(\theta)\Big)\omega_{N-1}R^{N-1}\vspace{3pt}\\
\quad\qquad-F(0)\omega_{N-1}2^{N-1}L_*^{N-1}R\cot\alpha_*\vspace{3pt}\\
\quad\ge\displaystyle\frac{\rho}{2}\|w\!-\!w_0\|_{H^1(\Omega^-_{R,1}\cup S^+_{R,\alpha})}^2\!-\!\Big(\frac{4\rho}{3}\!+\!\frac12\!+\!F(\theta)\Big)\omega_{N-1}R^{N-1}\!-\!(\rho\!+\!F(0))\omega_{N-1}2^{N-1}L_*^{N-1}R\cot\alpha_*.\eaa$$
Putting the previous inequality into~\eqref{ineq11}, one gets that
$$\baa{l}
J_{\Omega'_{R,\alpha,r}}(w)-J_{\Omega'_{R,\alpha,r}}(w_0)\vspace{3pt}\\
\quad\displaystyle\ge\beta\|w-w_0\|_{H^1(\Omega'_{R,\alpha,r})}^2-\Big(\frac{4\rho}{3}\!+\!\frac12\!+\!F(\theta)\Big)\omega_{N-1}R^{N-1}-(\rho\!+\!F(0))\omega_{N-1}2^{N-1}L_*^{N-1}R\cot\alpha_*\vspace{3pt}\\
\quad\displaystyle=\beta\delta^2-\Big(\frac{4\rho}{3}+\frac12+F(\theta)\Big)\omega_{N-1}R^{N-1}-(\rho+F(0))\omega_{N-1}2^{N-1}L_*^{N-1}R\cot\alpha_*\eaa$$
with $\beta:=\min(\sigma,\rho/2)>0$.

Finally, since the positive constants $\beta$, $\delta$, $\rho$ are independent of $\alpha$, $R$, $L$ and $r$ with $\alpha\in[\alpha_*,\pi/2)$ and $0<R<L\le L_*\le r$, there are then some positive real numbers  $R_*\in(0,L_*)$ and $\gamma>0$ such that $J_{\Omega'_{R,\alpha,r}}(w)-J_{\Omega'_{R,\alpha,r}}(w_0)\ge\gamma$ for all $\alpha\in[\alpha_*,\pi/2)$, $0<R\le R_*$, $0<R<L\le L_*\le r$ and $w\in H_{R,\alpha,r}$ with $\|w-w_0\|_{H^1(\Omega'_{R,\alpha,r})}=\delta$. The proof of Lemma~\ref{lem5.3} is thereby complete.
\end{proof}

\subsubsection*{End of the proof of Theorem~\ref{thm6}}

Let $\delta>0$, $R_*\in(0,L_*)$ and $\gamma>0$ be as in Lemma~\ref{lem5.3}. Let us then fix any funnel-shaped domain $\Omega$ satisfying~\eqref{defOmega}-\eqref{h} with $\alpha\in[\alpha_*,\pi/2)$, $0<R\le R_*$ and $0<R<L\le L_*$, and let us show that the solution $u$ of~\eqref{1} with the past condition~\eqref{initial}, given in Proposition~\ref{thm1}, is blocked in the sense of~\eqref{blocking}.

First of all, from Lemma~\ref{lem5.3}, for any $r\ge L_*$, the nonnegative functional $J_{\Omega'_{R,\alpha,r}}$ admits a local minimizer $w_r$ in $H_{R,\alpha,r}$ satisfying $\|w_r-w_0\|_{H^1(\Omega'_{R,\alpha,r})}<\delta$. This function $w_r$ is then a weak solution of the elliptic  problem~\eqref{truncated} and, since $f>0$ in $(-\infty,0)$ and $f<0$ in $(1,+\infty)$, one has $0\le w_r\le1$ almost everywhere in $\Omega'_{R,\alpha,r}$ and standard elliptic estimates imply that $w_r$ is a classical $C^2(\overline{\Omega'_{R,\alpha,r}})$ solution of~\eqref{truncated}, with $0<w_r<1$ in $\overline{\Omega'_{R,\alpha,r}}\setminus(\Gamma^-_{R,1}\cup\Gamma^+_{\alpha,r})$ (notice that $\Gamma^-_{R,1}$ and $\Gamma^+_{\alpha,r}$ meet $\partial\Omega$ orthogonally).

Remembering the definition of $\Omega'_{R,\alpha}$ in~\eqref{sets}, it follows from standard elliptic estimates that there is a sequence $(r_n)_{n\in\N}$ diverging to $+\infty$ such that the functions $w_{r_n}$ converge in $C^2_{loc}(\overline{\Omega'_{R,\alpha}})$ to a $C^2(\overline{\Omega'_{R,\alpha}})$ function $w$ solving
\begin{align}
\begin{cases}
\Delta w+f(w)=0 ~&\text{in}~\Omega'_{R,\alpha},\\
\nu\cdot\nabla w=0~&\text{on}~\partial\Omega'_{R,\alpha}\!\setminus\!\Gamma^-_{R,1},\\
w=1~&\text{on}~\Gamma^-_{R,1},
\end{cases}
\end{align}
and $0<w\le1$ in $\overline{\Omega'_{R,\alpha}}$ from the strong maximum principle. Furthermore, for any bounded measurable set $D\subset\Omega^+$, one has, for all $n$ large enough, $D\subset\Omega'_{R,\alpha,r_n}$ and $\|w_{r_n}\|_{L^2(D)}=\|w_{r_n}-w_0\|_{L^2(D)}\le\|w_{r_n}-w_0\|_{H^1(\Omega'_{R,\alpha,r_n})}<\delta$. Hence, $\|w\|_{L^2(D)}\le\delta$, and, since $\delta$ is independent of $D$, one gets that $\|w\|_{L^2(\Omega^+)}\le\delta$ by the monotone convergence theorem. Since $|\nabla w|$ is bounded from standard elliptic estimates, one infers that $w(x)\to0$ as $|x|\to+\infty$ in $\overline{\Omega'_{R,\alpha}}$. In other words, $w$ solves~\eqref{reduced}.

We now extend $w$ in $\overline{\Omega}\!\setminus\!\overline{\Omega'_{R,\alpha}}$ by $1$, namely we define
\begin{align*}
\overline u(x)=\begin{cases}
w(x)~&\text{if}~x\in\overline{\Omega'_{R,\alpha}},\vspace{3pt}\\
1,~&\text{if}~x\in\overline{\Omega}\!\setminus\!\overline{\Omega'_{R,\alpha}}.
\end{cases}
\end{align*}
Since $f(1)=0$ and $0<w\le1$ is a classical solution of~\eqref{reduced} in $\overline{\Omega'_{R,\alpha}}$, the function $\overline u$ is a supersolution of \eqref{1}. Finally, from the construction of $u$ in the proof of Proposition~\ref{thm1}, and in particular from~\eqref{defw-},~\eqref{defunn} and the fact that $w^-(t,\cdot)\to0$ as $t\to-\infty$ locally uniformly in~$\overline{\Omega}$, one has
$$u_n(-n,\cdot)\le\overline{u}\ \hbox{ in $\overline{\Omega}$}$$
for all $n$ large enough, hence $u_n(t,\cdot)\le\overline{u}$ in $\overline{\Omega}$ for all $t\ge-n$ and all $n$ large enough, by the maximum principle. As a consequence, $u(t,\cdot)\le\overline{u}$ in $\overline{\Omega}$ for all $t\in\R$, and the large time limit~$u_\infty$ of $u(t,\cdot)$ satisfies $0<u_\infty\le\overline{u}$ in $\overline{\Omega}$. Thus, $0<u_\infty\le w$ in $\overline{\Omega'_{R,\alpha}}$ and $u_\infty(x)\to0$ as $x_1\to+\infty$ in~$\overline{\Omega}$. The proof of Theorem~\ref{thm6} is thereby complete.


\section{The set of $(R,\alpha)$ with complete propagation property is open in $(0,+\infty)\times(0,\pi/2)$: proof of Theorem~\ref{thm7}}\label{Sec-6}

This section is devoted to the proof of Theorem~\ref{thm7}. The main strategy is to argue by way of contradiction and make use of Corollary~\ref{cor2} and Lemma~\ref{lemliouville1}. So, let $(R,\alpha)\in(0,+\infty)\times(0,\pi/2)$ be such that the solution $u$ of~\eqref{1} with past condition~\eqref{initial} propagates completely in the sense of~\eqref{complete}, and let us assume that there is a sequence $(R_n,\alpha_n)_{n\in\N}$ in $(0,+\infty)\times(0,\pi/2)$ converging to $(R,\alpha)$, such that the solutions $u_n$ of~\eqref{1} (in $\R\times\overline{\Omega_{R_n,\alpha_n}}$) with past conditions~\eqref{initial} do not propagate completely. From the dichotomy result of Theorem~\ref{thm2}, this means that each solution~$u_n$ is blocked, that is, there is a $C^2(\overline{\Omega_{R_n,\alpha_n}})$ solution $0<u_{\infty,n}<1$ of~\eqref{u-infty} in $\overline{\Omega_{R_n,\alpha_n}}$ such that $u_n(t,x)\to u_{\infty,n}(x)$ in $C^2_{loc}(\overline{\Omega_{R_n,\alpha_n}})$ as $t\to+\infty$ and
$$u_{\infty,n}(x)\to0\ \hbox{ as $x_1\to+\infty$ with $x\in\overline{\Omega_{R_n,\alpha_n}}$}.$$
On the other hand, by assumption of the theorem, the functions $h_n$ involved in the definitions~\eqref{defOmega}-\eqref{h} of the sets $\Omega_{R_n,\alpha_n}$ converge (in $C^{2,\beta}_{loc}(\R)$) to the function $h$ involved in the definition of the set $\Omega_{R,\alpha}$. In particular, since $\alpha>0$, there is a point $x_0\in\R^N$ (independent of~$n\in\N$) such that $\overline{B_{R_0}(x_0)}\subset\overline{\Omega_{R,\alpha}}$ and $\overline{B_{R_0}(x_0)}\subset\overline{\Omega_{R_n,\alpha_n}}$ for all $n\in\N$, where $R_0>0$ is given as in Lemma~\ref{lemsub}. It then follows from Lemmas~~\ref{lemsub} and~\ref{lemliouville1} (the latter 	applied in $\Omega_{R_n,\alpha_n}$) that, for each $n\in\N$,
$$\min_{\overline{B_{R_0}(x_0)}}u_{\infty,n}<\max_{\overline{B_{R_0}}}\psi=\psi(0)<1,$$
where the $C^2(\overline{B_{R_0}})$ function $\psi$ is as in Lemma~\ref{lemsub}. From standard elliptic estimates, there is a $C^2(\overline{\Omega_{R,\alpha}})$ solution $0\le U\le1$ of~\eqref{u-infty} in $\overline{\Omega_{R,\alpha}}$ such that, up to extraction of a subsequence, $\|u_{\infty,n}-U\|_{C^2(K\cap\overline{\Omega_{R_n,\alpha_n}})}\to0$ as $n\to+\infty$ for every compact set $K\subset\overline{\Omega_{R,\alpha}}$. In particular, one has
\be\label{U<1}
\min_{\overline{B_{R_0}(x_0)}}U\le\psi(0)<1.
\ee
Finally, remember that the functions $u_{\infty,n}(x)$ converge to $1$ as $x_1\to-\infty$ with $x\in\overline{\Omega_{R_n,\alpha_n}}$, uniformly with respect to $n\in\N$, from property~\eqref{cv1unif} in the construction of the solutions $u$ in Section~\ref{Sec-2}. Therefore, $U(x)\to1$ as $x_1\to-\infty$ with $x\in\overline{\Omega_{R,\alpha}}$. Corollary~\ref{cor2} then implies that the solution $u$ of~\eqref{1} in $\R\times\overline{\Omega_{R,\alpha}}$ with past condition~\eqref{initial} satisfies $u(t,x)\le U(x)$ for all $(t,x)\in\R\times\overline{\Omega_{R,\alpha}}$. The condition~\eqref{U<1} then means that $u$ does not propagate completely, which is a contradiction. The proof of Theorem~\ref{thm7} is thereby complete.\hfill$\Box$


\small

\end{document}